\numberwithin{equation}{section}    
\theoremstyle{plain}
\newtheorem{Theorem}{Theorem}[section]
\newtheorem{Proposition}[Theorem]{Proposition}
\newtheorem{Corollary}[Theorem]{Corollary}
\newtheorem{Lemma}[Theorem]{Lemma}
\theoremstyle{definition}
\newtheorem{Definition}[Theorem]{Definition}
\newtheorem{Remark}[Theorem]{Remark}
\renewcommand{\phi}{\varphi}
\newcommand{\ato}[2]{\genfrac{}{}{0pt}{2}{#1}{#2}}
\DeclareMathOperator{\av}{\mathfrak{u}}
\newcommand{\RR}{\mathbb{R}}
\newcommand{\CC}{\mathbb{C}}
\newcommand{\NN}{\mathbb{N}}
\newcommand{\Vfr}{V^{{\rm fin}, r}}
\newcommand{\cA}{\mathcal{A}}
\newcommand{\calB}{\mathcal{B}}
\newcommand{\calF}{\mathcal{F}}
\newcommand{\calG}{\mathcal{G}}
\newcommand{\calK}{\mathcal{K}}
\newcommand{\calM}{\mathcal{M}}
\newcommand{\calN}{\mathcal{N}}
\newcommand{\tr}{{\mathrm{tr}}}
\newcommand{\hm}[1]{\textbf{*}\leavevmode{\marginpar{\tiny%
$\hbox to 0mm{\hspace*{-0.5mm}$\leftarrow$\hss}%
\vcenter{\vrule depth 0.1mm height 0.1mm width \the\marginparwidth}%
\hbox to 0mm{\hss$\rightarrow$\hspace*{-0.5mm}}$\\\relax\raggedright #1}}}
\newcommand{\ow}[1]{\widetilde{#1}}
\title[The Ihara Zeta Function for infinite graphs]{The Ihara Zeta Function for
infinite  graphs}
\author[Lenz]{Daniel Lenz} \address{Mathematisches Institut \\Friedrich Schiller
Universit{\"a}t Jena \\07743 Jena, Germany } \email{daniel.lenz@uni-jena.de}
\author[Pogorzelski]{Felix Pogorzelski}\address{Mathematisches Institut
\\Universit\"at Leipzig \\04109 Leipzig, Germany }
\email{felix.pogorzelski@math.uni-leipzig.de}
\author[Schmidt]{Marcel Schmidt}\address{Mathematisches Institut \\Friedrich
Schiller Universit{\"a}t Jena \\07743 Jena, Germany }
\email{schmidt.marcel@uni-jena.de}
\date{\today}
\begin{document}

\begin{abstract}
We put forward the concept of measure graphs. These are   (possibly
uncountable) graphs equipped with an action of a groupoid and a
measure invariant under this action. Examples include finite graphs, periodic
graphs, graphings and percolation graphs. Making use
of Connes' non-commutative integration theory we construct a Zeta
function  and present a determinant formula for it. We further
introduce a notion of weak convergence of measure graphs and show
that our construction is compatible with it. The approximation of
the Ihara Zeta function via the normalized version on finite graphs
in the sense of Benjamini-Schramm follows as a special case. Our
framework not only unifies  corresponding earlier results occurring
in the literature. It likewise provides extensions to rich new
classes of objects such as percolation graphs.
\end{abstract}

\maketitle

\tableofcontents

\section*{Introduction}

 The theory of Zeta functions of finite graphs is a well established topic connecting
various branches of mathematics, see e.g.\@ the monograph  by Terras
\cite{Terras}. Here, the Zeta function  comes about as function
storing information on the number of loops in the graph. More
specifically, it is essentially given  as exponential of a power
series whose $n$th-coefficient is determined  by the number of loops
of length $n$.  In contrast, Zeta functions on infinite graphs are
much less understood. In fact, for general infinite graphs it is not
even clear how to define a Zeta function in the first place as - due
to the infiniteness of the graph -  there are infinitely many loops
of each length.

Recent years have seen quite some interest in Zeta functions on
infinite graphs.  Indeed, for certain periodic graphs an ad-hoc
definition of the Zeta function has been given  by Clair /
Mokhtari-Sharghi in \cite{CMS01} and for certain specific examples
it has been investigated  how to define a Zeta function via suitable
approximations by Grigorchuk / Zuk \cite{GZ}, Clair /
Mokhtari-Sharghi \cite{CMS} and Guido / Isola / Lapidus
\cite{GIL08,GIL09}. The authors of \cite{GIL08} note as a main
motivation for their study that there are only very few infinite
graphs for which a Zeta function is defined. Also, for the two
dimensional integer lattice a Zeta function has been defined and
computed by Clair  in \cite{Clair} and for general regular graphs
with a transitive group action a Zeta function has  been defined and
studied in its connection to heat kernels in Chinta / Jorgenson /
Karlsson \cite{CJK13}. A recent approach for a class of infinite
weighted graphs can be found in \cite{Dei14}.

 Roughly speaking these works offer two
different solutions to  deal with the mentioned problem of
infiniteness of number of loops of a given size: One solution is to
suitably approximate the infinite graph by finite graphs and show
convergence of the Zeta functions of the finite graphs
\cite{GZ,CMS,GIL08,GIL09}.  The other solution amounts to only
counting  the loops at finitely many special vertices
\cite{Clair,CJK13} as is very natural in the presence of symmetries
in the graph.

While these offer very convincing solutions in specific cases, there
is so far  no general procedure on how to associate a Zeta function
to a graph or how to approximate it and there is no closed formula
for a Zeta function on a general graph. This is the starting point
for our paper. Our main aims  are the following:

\begin{itemize}
\item  To associate an Ihara type Zeta function to a large class of graphs (called measure graphs below)
containing finite, periodic, percolation graphs and graphings as subclasses.

\item To provide a closed formula for this Zeta function via determinants on von Neumann algebras.

\item To study the continuous dependence of this  Zeta function on the underlying measure graph.
\end{itemize}

The corresponding  results are all  new in the general context
provided here. On the one hand, they give a systematic and unified
foundation for the works mentioned above. On the other hand, they
also extend the framework studied in the literature in various ways.
For instance, graphs obtained by vertex percolation on Cayley graphs
fall into the studied class. Consequently, one obtains the - to the
knowledge of the authors - first approach to define the Ihara Zeta
function for percolation graphs.

\smallskip

As part of our investigation
\begin{itemize}
\item we put forward a notion of weak convergence of measure graphs.
\end{itemize}
Weak convergence of measure graphs may be of independent interest.
It  contains the concept of Benjamini-Schramm convergence for finite
graphs as a special case.  Moreover,  it also allows for convergence
of infinite graphs. For example, we show that percolation graphs
with weakly convergent probability laws
are weakly convergent as measure graphs. 
Another advantage of weak convergence of measure graphs  concerns
the description of the limiting object. More precisely, when dealing
with  sofic Cayley graphs or periodic graphs  we are able to
directly obtain the original graph as the limit object from a weakly
convergent sequence  of finite measure graphs.

In our context, weak convergence of measure graphs allows us to
settle the issue of the continuous dependence of the Zeta function on the underlying measure graph. More
specifically, we establish that weak convergence of measure graphs
implies convergence of the corresponding Zeta functions. From this
continuity result we obtain the previously known approximation
results for Zeta functions via finite graphs as a special case.
Moreover, as a complete novelty,  we obtain from this continuity
result the  convergence of the Zeta functions associated with a
weakly convergent sequence of (infinite) percolation graphs.


To achieve  the mentioned aims and results we
introduce  the concept of a measure graph. Indeed, setting up the
framework centered around measure graphs can be seen as the main
task  in our approach. The idea behind it is simple: Measure graphs
provide a measure on the graph and this measure satisfies an
invariance property reflecting the symmetries of the graph.  This
then  allows one to calculate an `averaged number of loops of a
given size' by counting in each vertex the loops of this size and
then average this function via the given measure. In this way, loops
at all vertices are taken into account and at the same time one ends
up with a finite number. On the technical level quite some care is
required, in particular, in order to  implement the invariance of
the measure. To do so, we rely on Connes' non-commutative
integration theory \cite{Connes-1979}.

Our set-up may be of interest for other questions as well. For
example it may be useful for dealing with random Schr{\"o}dinger
operators on graphs.



The present paper is organized as follows. 
We present our framework in Section~\ref{Framework}.  In particular,
there we  introduce the concept of   measure graph $(G,M)$
consisting of a measurable graph $G$ together with a measure $M$ on
its vertex set. In order to formulate the invariance property of the
measure we will need the action of a groupoid $\calG$ on $G$. The
assumptions required for this action lead us to the concept of a
graph over a groupoid.

We then use these ingredients to   introduce the concept of the
Ihara Zeta function of a measure graph $(G,M)$ in Section
\ref{Zeta}. The Zeta function is put forward  as the exponential of
a power series. The coefficients of the power series are determined
via Connes' non-commutative integration theory.  In this way we
effectively obtain these coefficients as integrals over the space of
vertices.

Non-commutative integration  theory also allows us to introduce von
Neumann algebras associated to measure graphs. This is discussed in
Section~\ref{Calculating-vertices} and may, again, be of independent
interest in further studies as well.  We make use of these tools to
prove that the Zeta function can be calculated via a determinant of
an operator on the vertices. Specifically, with notation introduced
below,  Theorem \ref{Determinant formula} gives for each measure
graph $(G,M)$ the following.

\medskip

\textbf{Determinant formula.} $Z_{(G,M)}(u) ^{-1} =
(1-u^2)^{-\chi_{(G,M)}} {\rm det}_\tau (I - u A_G + u^2 Q_G).$

\medskip

Of course, the use of determinants of non-positive operators
requires some care.  Here, we essentially use the determinant
provided in \cite{GIL08,GIL09}. For positive invertible operators,
this notion coincides with the famous Fuglede-Kadison determinant
\cite{FK}, see also  \cite{Lu}. For possibly non-invertible
 operators, one has to deal with
singularities. Results for such elements in a von Neumann algebra
associated with some countable, amenable group have recently been
proven by Li / Thom in \cite{LT}.


\smallskip

In Section \ref{essRegular} we have a look at the case that the
vertex degree is constant. In this case the determinant formula can
be considerably  simplified.  It  can then  be
expressed via the so-called integrated density of states. This is
the content of  Theorem \ref{determinant-formula-regular}. The
formula proven  in the theorem has been used  in \cite{GZ} to define
a Zeta function. It also has recently been obtained in \cite{CJK13} via
an analysis of Bessel functions and heat kernels.

\medskip

Section~\ref{Result} is concerned with the convergence of measure graph
sequences and the convergence of the corresponding sequence of Ihara Zeta functions.
Our notion goes beyond weak convergence of finite graphs in the sense of Benjamini-Schramm.
Concerning the convergence of the underlying Ihara Zeta functions,
we obtain in Theorem~\ref{theorem:convergence of zeta functions}:


\medskip

\textbf{Continuity result.} If the measure graphs $(G_n,M_n)$ with uniform
vertex degree bound converge
weakly to the measure graph $(G,M)$, then the
$Z_{(G_n,M_n)}$ converge to $Z_{(G,M)}$ compactly around zero.

\medskip

This result covers all the
earlier results on convergence of graph Zeta functions given in
\cite{CMS,GZ,GIL08,GIL09}. It even strengthens them by providing an
interpretation of the limit as the Zeta function of a graph.
Even more, we derive the corresponding approximation
for connected, finite graphs converging weakly
in the sense of Benjamini-Schramm, cf.\@ Theorem~\ref{theorem:weakly convergent graphs}.
Another new application is the convergence of the Ihara Zeta functions
associated to sequences of percolated Cayley graphs
with their probability laws $\mathbb{P}_n$ being weakly
convergent, cf.\@ Theorem~\ref{theorem:percolationconv}.

\medskip

Furthermore, the result can be used to provide a (rather large)
class of new examples on which the Zeta function can be obtained via
approximation.
This is discussed in Section \ref{Actions}, where we study graphs
allowing for a proper action of a sofic group with finite covolume.
We explicitly construct a sequence of finite graphs
converging towards the original graph as measure graphs. This is
the content of Theorem~\ref{thm:soficapprox}.
The convergence on the level of Zeta functions is immediate
from the previous section.

\smallskip
%
The considerations of the present work are phrased within the
measurable category. However, in  prominent classes of  examples we
often  have some additional topological information at hand. This
and more will be  addressed  in a companion  paper
\cite{LenzPS-162}.

\vspace{.85cm}

{\small

\textbf{Acknowledgements.} The authors gratefully acknowledge most
inspiring discussions on Zeta functions with Anders Karlsson and
Michel Lapidus and on  non-commutative integration theory  with
Norbert Peyerimhoff, Peter Stollmann  and Ivan Veseli\'{c}. The
authors thank Anton Deitmar for valuable remarks on groupoids and on
the notation in the present manuscript. F.P.\@ thanks Damien
Gaboriau for a very enlightening discussion at the IAS at Hebrew
University and for addressing the question of percolation measured
graphs. Moreover, the authors would like to thank the referees for
their careful reading of the manuscript and the corresponding
suggestions. F.P.\@ expresses his thanks for support through the
German National Academic Foundation (Studienstiftung des deutschen
Volkes). M.S. has been financially supported by the
Graduiertenkolleg 1523/2 : Quantum and gravitational fields. }

\section*{List of essential  pieces of notation}
Here we present a list of the  main pieces of notation together with
a short explanation and  the number of the pages on which  they are
introduced.

\begin{itemize}

\item $G = (V,E)$: graph with vertex set $V$ and edge set $E$
(Page \pageref{graph}).

\item $V^{(2)}$: set of pairs of vertices in the same connected
component (Page \pageref{vtwo}).

\item $a_G$: the adjacency matrix of $G$ (Page \pageref{adjacency})

\item $B_r^G (x)$ rooted graph with root $x$ induced from the
$r$-ball around $x$ (Page \pageref{brx}).

\item $\calG$: groupoid (Page \pageref{groupoid}).

\item $\nu$: transversal function  on $\calG$ (Page \pageref{transversal}).

\item $\eta$: the canonical random variable assigning each vertex
the mass one (Page \pageref{canonical}).

\item $\Omega$ units of the groupoid (Page \pageref{unitspace}).

\item $\av$ averaging function i.e. function satisfying $\nu \ast \av=
1$ (Page \pageref{averaging}).

\end{itemize}

\section{The framework of measure graphs}\label{Framework}
In this section we present the notation and concepts used throughout
the paper. In particular, we introduce our concept of measure graphs
over groupoids. These consists of a (not necessarily countable)
measurable graph together with a measure satisfying some invariance
property. The invariance property is phrased via a groupoid. More
specifically, the   basic pieces of data used in our setting are the
following:

\begin{itemize}

\item A measurable graph $G = (V,E)$.

\item A measurable groupoid $\mathcal{G}$  such that the graph is a space over
the groupoid in the sense of Connes.

\item A measure $M$ which is invariant with respect to the groupoid.

\item an averaging function $\av$ providing  a connection between the
groupoid and the graph.

\end{itemize}

These four pieces of data are discussed in the subsequent subsections.

\subsection{Graphs} Here we introduce  the concept of graphs used
in the sequel. These will be undirected graphs with uniform bounded
vertex degree and without loops.

\bigskip

By a  \emph{graph} we mean a  tuple $G = (V,E)$ consisting of a set
 of \emph{vertices} $V \neq \emptyset$ and a set of \emph{edges} $E
\subseteq V\times V$ such that the following holds:\label{graph}

\begin{itemize}

\item Whenever  $(x,y)$ belongs to $E$ then so does $(y,x)$.

\item There is no $x\in V$ such that $(x,x)$ belongs to $E$.

\item There is a $D>0$ such that the cardinality of $\{ y\in V :
(x,y)\in E\}$ is bounded by $D$ for any $x\in V$.

\end{itemize}

\begin{Remark}
We  emphasize that we do not put any restrictions
on the cardinality of $V$ nor $E$.
\end{Remark}

\medskip

Let $G = (V,E)$ be a graph.  For given $x \in V$ we call the pair
$(G,x)$ a \emph{rooted} graph with \emph{root} $x$. If $(x,y) = e
\in E$ we write $x \sim y$ and call $x = o(e)$ the \emph{origin} and
$y = t(e)$ the \emph{terminal vertex} of $e$. For an edge $e =
(o(e),t(e))$ we define the \emph{reversed edge} via $\bar{e} =
(t(e),o(e))$. Two edges $e,f$ are called \textit{incident} if
$\{t(e), o (e)\} \cap\{ t(f), o (f)\}$ consists of exactly one
element.

The \emph{vertex degree} at $x$ is the number of edges with origin
$x$. It will be denoted by $\mbox{deg}(x)$. In this way, $\mbox{deg}
$ becomes a function from $V$ to the non-negative integers.


A \emph{path} is a finite sequence of edges $(e_1,\ldots,e_n),$ such
that $o(e_{i+1}) = t(e_i)$ for each $i = 1,\ldots,n-1$. The number
of edges occurring in a path $P$ is called its \emph{length} and is
denoted by $\ell(P)$. Two vertices $x,y \in V$ are said to be
\emph{connected}, if there exists a path $(e_1,\ldots,e_n),$ such
that $o(e_1) = x$ and $t(e_n) = y$. If $x,y \in V$ are connected
their \emph{combinatorial distance}, $d(x,y),$ is the length of
the shortest path connecting them. If $x$ and $y$ are not connected
we set $d(x,y)~=~\infty$.

A \textit{connected component} in a graph is a maximal set of
vertices such that the combinatorial distance between any two
elements of this set is finite.  For an $x\in V$ the connected
component containing $x$ is  the set $V(x) $ of all vertices which
are connected with $x$. We denote the induced subgraph by  $G(x) =
(V(x),E \cap [V(x) \times V(x)])$. For vertices $x,y\in V$, we will
write $x \approx y$ if $x$ and $y$ belong to the same  connected
component.

Further, \label{brx} for $r \in \NN$, we let $B^G_r(x)$ denote the
graph with root $x$ which is induced by $G$ when restricting the
vertex set to the combinatorial $r$-ball around $x$.  Note that by
assumption on the uniform boundedness of the degree,
$B^G_r(x)$ is finite and the graph $G(x)$
is at most countable.

The \textit{radius}, $\varrho(G,x)$ of a finite connected  graph $G$
with root $x$ is the maximal distance of a vertex from the root,
i.e.

$$\varrho (G,x) = \max\{d(y,x) : y \in V(G)\}.$$

\medskip

Any graph comes naturally with a certain product space and a
canonical function on it. This is discussed next. Let $G$ be a
graph. Then, we define \label{vtwo}
$$V^{(2)}:= V^{(2)}_G:=\{ (x,y)\in V\times V: G(x) = G(y)\}\subset V\times V.$$
On $V^{(2)}$ there is the canonical function, called
\label{adjacency} \textit{adjacency matrix} of $G$, defined via
$$ a_G : V^{(2)} \to \{0,1\}, \;  a_G (x,y) =1 \text{ if } x\sim y \text{ and }a_G (x,y)=0 \text{ else.} $$
So, in particular, we have that
$$E = a_G^{-1} (1)\subset V^{(2)}.$$
We denote the restriction of $d$ to $V^{(2)}$ by $d$ again.

Whenever $V$ carries a $\sigma$-algebra, then $V^{(2)}$ becomes a
measurable  space with the $\sigma$-algebra induced by the product
$\sigma$-algebra on $V\times V$ and so does its subset $E$.

The real numbers and the complex numbers (and subsets thereof) will always be equipped with the Borel-$\sigma$-algebra
generated by the open subsets. Moreover, we will need the extended
positive half-axis $[0,\infty] = [0,\infty)\cup\{\infty\}$. It will
be equipped with the $\sigma$-algebra generated by the
Borel-$\sigma$-algebra on $[0,\infty)$.

Two graphs $G_1 = (V_1,E_1)$ and $G_2 = (V_2,E_2)$ are called
\textit{isomorphic} if there exists a bijective map $\phi : V_1\to
V_2$ with $x\sim y$ if and only if $\phi (x) \sim \phi (y)$. This
map is then called a \textit{graph isomorphism}. Two finite rooted
graphs are  called isomorphic if there exists an isomorphism between
them which maps the root of one into the root of the other graph.
Obviously, isomorphy is an equivalence relation on all finite rooted
graphs. For $r\geq 0$, we denote by $\cA_r^D$ the set of all
equivalence classes of finite connected rooted graphs with vertex
degree bounded by $D$ and radius equal to $r$. By $\cA^D$ we
denote the union over $r\geq 0$ of  all $\cA_r^D$.
 Due to the boundedness assumption on the degree, this is a
countable set. We will equip it with the discrete topology and the
induced $\sigma$-algebra (both of which agree with the power set).
For any $r\geq0$ we let $\pi_r$ be the map
$$\pi_r : V\to \cA^D,\, x\mapsto [B^G_r (x)],$$
where  $[ \cdot ]$ denotes the class of a rooted graph modulo
isomorphy. For a given $\alpha \in \mathcal{A}^D$ and a set of vertices $\ow{V}\subseteq V$ we let
$$\ow{V}_\alpha := \ow{V} \cap \pi_{\rho(\alpha)}^{-1}(\alpha) = \{x \in \ow{V}\, : \, \pi_{\rho(\alpha)}(x) = \alpha\}$$
be the set of all vertices in $\ow{V}$ whose $\rho(\alpha)$-ball is
isomorphic to $\alpha$. Here, of course, the choice $V = \ow{V}$ is
possible resulting in the set  $V_\alpha$.

If for a graph $G$ and $r \geq 0$, the subgraph of $G$ induced by
the ball $B_r^G(x)$ around a vertex $x$ is a  representative  of the
class $\alpha \in \mathcal{A}^D$, we will also write $B_r^G(x) \in
\alpha$.

\medskip

We will be interested in graphs carrying a $\sigma$-algebra so that
the graph is locally constant in a certain sense.

\begin{Definition}[Measurable graph] A pair $(G, \mathcal{B})$
consisting of a graph $G=(V,E)$ and a $\sigma$-algebra $\mathcal{B}$
on $V$ is called a \textit{measurable graph} if the following
conditions are satisfied:

\begin{itemize}
\item For any $r\geq0$ the map $\pi_r : V\to \cA^D,\, x\mapsto [B^G_r
(x)],$ is measurable.

\item The adjacency matrix $ a_G : V^{(2)}\to
\{0,1\}$ is measurable.

\item For any  two measurable $a,b : V^{(2)} \to
[0,\infty]$, the \textit{matrix product}
$$ a\ast b : V^{(2)} \to [0,\infty], \;  (a\ast b) (x,y)
:=\sum_{z \approx x} a(x,z) b(z,y)$$ is measurable.
\end{itemize}
\end{Definition}

\begin{Remark} We  do not require the measurability of $V^{(2)}$ as a
subset of $V\times V$ (equipped with the product $\sigma$-algebra).
We rather work directly with the $\sigma$-algebra induced on
$V^{(2)}$ by the product $\sigma$-algebra.
While this does not play a role in the examples discussed below it
may well be an advantage in further studies. In particular, it will be relevant in the already mentioned companion paper \cite{LenzPS-162}. 
\end{Remark}

\medskip

In a measurable graph certain basic quantities are automatically
measurable. This is collected in the next proposition. It will be
used tacitly in the sequel.

\begin{Proposition}[Measurability of basic quantities] \label{Measurability-basic-quantities} Let $(G,\mathcal{B})$ be a measurable graph.
Then, the following assertions hold:

(a) The  combinatorial distance  $d : V^{(2)}\to[0,\infty)$ is
measurable.

(b) The diagonal $\{(x,x) : x\in V\} \subset V^{(2)}$ is measurable.

(c) For any $r,s,t\geq 0$ and $\alpha,\beta \in \cA^D$ the set
$$\{(x,y)\in V^{(2)} : \pi_r (x) = \alpha, \pi_s (y) = \beta,
d(x,y)\leq t\}$$ is measurable.
\end{Proposition}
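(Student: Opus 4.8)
The plan is to derive all three measurability assertions from the defining properties of a measurable graph, in particular from the measurability of the maps $\pi_r$ and the closure of measurable functions on $V^{(2)}$ under the matrix product $\ast$. The key observation is that combinatorial distance and the adjacency structure can be recovered from iterated matrix products of $a_G$, and that the local ball-type maps $\pi_r$ encode exactly the isomorphism type of a neighborhood, hence all quantities computed from that neighborhood.

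For part (a), I would show that $d$ takes only finitely many distinct values on each fiber of the pair $(\pi_r(x),\pi_r(y))$ and express the level sets $\{d = n\}$ in terms of iterated matrix powers of $a_G$. Concretely, define $a_G^{\ast 0}$ to be the indicator of the diagonal and $a_G^{\ast (n+1)} = a_G \ast a_G^{\ast n}$; each of these is measurable on $V^{(2)}$ by repeated application of the third defining property, since $a_G$ is measurable by the second. Then $a_G^{\ast n}(x,y)>0$ precisely when there is a path of length $n$ from $x$ to $y$, so $\{d(x,y)\le n\} = \bigcup_{k\le n}\{a_G^{\ast k}>0\}$ is measurable, and hence $d$ is a measurable $[0,\infty)$-valued function. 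Part (b) then follows immediately as the special case $\{d=0\} = \{a_G^{\ast 0} = 1\}$, the set where the zeroth power equals one.

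For part (c), I would first note that $\{d(x,y)\le t\}$ is measurable by part (a). The sets $\{x : \pi_r(x)=\alpha\} = \pi_r^{-1}(\{\alpha\})$ and $\{y : \pi_s(y)=\beta\} = \pi_s^{-1}(\{\beta\})$ are measurable in $V$ because each $\pi_r$ is measurable and $\cA^D$ carries the discrete $\sigma$-algebra, so every singleton is measurable. The issue is that these last two conditions live on $V$ while the whole set lives in $V^{(2)}\subset V\times V$, so I would pull them back along the two coordinate projections $\mathrm{pr}_1,\mathrm{pr}_2 : V^{(2)}\to V$, which are measurable for the induced $\sigma$-algebra on $V^{(2)}$. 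The desired set is then the intersection
$$\mathrm{pr}_1^{-1}\big(\pi_r^{-1}(\{\alpha\})\big)\ \cap\ \mathrm{pr}_2^{-1}\big(\pi_s^{-1}(\{\beta\})\big)\ \cap\ \{d\le t\},$$
a finite intersection of measurable subsets of $V^{(2)}$, hence measurable.

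The main obstacle I anticipate is bookkeeping rather than genuine difficulty: one must be careful that all products and level sets are formed on $V^{(2)}$ with its induced $\sigma$-algebra, not on $V\times V$, since the excerpt explicitly refrains from assuming $V^{(2)}$ is a measurable subset of the product. This is precisely why the matrix product $\ast$ is built into the definition as sending measurable functions on $V^{(2)}$ to measurable functions on $V^{(2)}$, and the whole argument rests on invoking that closure property iteratively rather than manipulating objects on $V\times V$. I would therefore state each iterated power explicitly as living on $V^{(2)}$ and verify that the coordinate projections restricted to $V^{(2)}$ are measurable with respect to the induced structure before assembling the final intersection.
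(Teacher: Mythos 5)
Your treatment of part (c) is fine and is essentially the paper's (the paper simply notes that (c) follows from (a) together with the measurability of the maps $\pi_u$; your pullback along the two coordinate projections, which are measurable for the trace $\sigma$-algebra, is the honest spelling-out of that remark). The genuine gap is in your (a) and (b), and it is a circularity. You take as the base case of your recursion the function $a_G^{\ast 0}$, the indicator of the diagonal, and assert its measurability so that the closure property for $\ast$ may be applied --- but the measurability of the diagonal in $V^{(2)}$ is exactly assertion (b), which you then ``deduce'' as the special case $\{d=0\}=\{a_G^{\ast 0}=1\}$. Nothing in the definition of a measurable graph hands you the diagonal: the $\sigma$-algebra on $V^{(2)}$ is only the trace of the product $\sigma$-algebra, and diagonals of product $\sigma$-algebras are in general not measurable; producing the diagonal is the content of (b), not an available input. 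Nor can you repair this by starting the recursion at $k=1$: your identity $\{d\le n\}=\bigcup_{k\le n}\{a_G^{\ast k}>0\}$ then becomes false, since a pair $(x,x)\in V^{(2)}$ with $x$ isolated satisfies $d(x,x)=0\le n$ but $a_G^{\ast k}(x,x)=0$ for all $k\ge 1$; and conversely, for non-isolated $x$ one has $a_G^{\ast 2}(x,x)=\deg(x)>0$, so positivity patterns of the powers cannot by themselves tell a diagonal point apart from a genuine distance-two pair. In short, positivity of powers of $a_G$ detects the distances $n\ge 1$, but it can never isolate the diagonal, which is exactly where your argument assumes what is to be proved.

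The paper's proof runs the logic the other way, and this reversal is the missing idea. One first proves (a) for positive distances using only the powers $a_G^n$, $n\ge 1$ (base case $a_G$ itself, whose measurability is a definitional axiom; no zeroth power ever appears): the set $\{d=n\}$ is expressed through the conditions $a_G^n(x,y)\neq 0$ and $a_G^k(x,y)=0$ for $k=1,\ldots,n-1$. Assertion (b) is then obtained as a \emph{consequence}, by complementation: every $(x,y)\in V^{(2)}$ has $x$ and $y$ in the same connected component, so $d$ is finite on $V^{(2)}$, whence $\{d=0\}=V^{(2)}\setminus\bigcup_{n\ge 1}\{d=n\}$. It is precisely the defining property of $V^{(2)}$ --- finiteness of $d$ there --- that makes the diagonal accessible at all; your proposal never invokes this structural fact, and without it the diagonal is out of reach. (Even in the paper's formulation the diagonal requires care when writing the conditions characterizing $\{d=n\}$, again because $a_G^2(x,x)=\deg(x)$ need not vanish; it is the complementation step inside $V^{(2)}$, not any positivity test on powers, that ultimately yields (b).)
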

\begin{proof} Clearly, (b) is a direct consequence of (a).
Similarly, (c) is a direct consequence of (a) and the measurability
of the $\pi_u$, $u\geq 0$. Thus, it suffices to show (a): By
measurability of the adjacency matrix $a_G$ and the matrix product
all  powers $a_G^n$, $n\in \NN$,  (defined inductively via $a_G^1 :=
a_G$ and $a_G^{n+1} := a_G \ast a_G^n$) are measurable. Now,
clearly, $d(x,y) = 1$ holds if and only if $a_G (x,y) = 1$ holds
and, for $n\geq 1$, $d(x,y) = n$ holds if and only if both $a_G^n
(x,y) \neq 0$ and $a_G^k (x,y) =0$, $k=1,\ldots, n-1$ hold. This
shows measurability of the sets $\{ (x,y)\in V^{(2)}  : d(x,y) =
n\}$ for $n\in \NN$. This, then implies  measurability of
$$\{ (x,y)\in V^{(2)} :
d(x,y) = 0\} = V^{(2)} \setminus \bigcup_{n\in \NN}\{ (x,y)\in
V^{(2)} : d(x,y) = n\}.$$ This finishes the proof.
\end{proof}

\begin{Proposition}[Measurability of product of matrix with a
function] \label{Measurability-product} Let  $(G,\mathcal{B})$ be a measurable graph. Then, for
any measurable $a : V^{(2)}\to [0,\infty]$ and any
measurable $F : V\to [0,\infty]$ the map
$$ a\widetilde{\ast} F  :  V\to [0,\infty], \:   x\mapsto \sum_{y\approx x} a(x,y) F(y)$$
 is measurable. In particular, the vertex degree $\mbox{deg} :
 V\to [0,\infty)$ is measurable.
\end{Proposition}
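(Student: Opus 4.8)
The plan is to reduce the measurability of the map $a \widetilde{\ast} F$ to the measurability of the matrix product $a \ast b$ already guaranteed by the definition of a measurable graph. The key observation is that multiplying a matrix by a function is a special case of multiplying two matrices: given $F : V \to [0,\infty]$, I would introduce the matrix $b_F : V^{(2)} \to [0,\infty]$ defined by $b_F(x,y) = F(y)$ (that is, $b_F$ depends only on the second variable). If $b_F$ is measurable as a function on $V^{(2)}$, then by definition the matrix product $a \ast b_F$ is measurable, and for $x \in V$ the diagonal value is
$$ (a \ast b_F)(x,x) = \sum_{z \approx x} a(x,z) b_F(z,x) = \sum_{z \approx x} a(x,z) F(z) = (a \widetilde{\ast} F)(x). $$
Thus $a \widetilde{\ast} F$ is the restriction of the measurable matrix $a \ast b_F$ to the diagonal, and since by Proposition \ref{Measurability-basic-quantities}(b) the diagonal is a measurable subset of $V^{(2)}$, measurability of $a \widetilde{\ast} F$ follows once I identify it with a measurable function on $V$.

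The point requiring care is the measurability of $b_F$ on $V^{(2)}$ and the identification of functions on the diagonal with functions on $V$. For the first, I would note that $b_F = F \circ \mathrm{pr}_2$, where $\mathrm{pr}_2 : V^{(2)} \to V$ is the projection onto the second coordinate; this projection is measurable because the $\sigma$-algebra on $V^{(2)}$ is induced from the product $\sigma$-algebra on $V \times V$, and projections are measurable with respect to the product $\sigma$-algebra. Since $F$ is measurable by hypothesis, so is $b_F$. For the second, the diagonal map $\Delta : V \to V^{(2)}$, $x \mapsto (x,x)$, is measurable (its components are the identity on $V$), so composing the measurable function $a \ast b_F$ with $\Delta$ yields the measurable function $x \mapsto (a \ast b_F)(x,x) = (a \widetilde{\ast} F)(x)$ on $V$.

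Finally, for the addendum that $\mathrm{deg}$ is measurable, I would simply specialize to $a = a_G$ and $F \equiv 1$, the constant function. Then
$$ (a_G \widetilde{\ast} \mathbf{1})(x) = \sum_{y \approx x} a_G(x,y) = \#\{ y \in V : x \sim y \} = \mathrm{deg}(x), $$
so measurability of the degree is immediate from the general statement. I expect the main (and only real) obstacle to be purely bookkeeping: ensuring that the projection $\mathrm{pr}_2$ and the diagonal embedding $\Delta$ are genuinely measurable with respect to the \emph{induced} $\sigma$-algebra on $V^{(2)}$ rather than the ambient product $\sigma$-algebra on $V \times V$, in keeping with the Remark following the definition of a measurable graph. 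Since both maps are built from the identity and coordinate projections, this verification is routine, and the crux of the proof is the algebraic identity rewriting $a \widetilde{\ast} F$ as the diagonal of $a \ast b_F$.
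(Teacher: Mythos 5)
Your overall strategy is exactly the paper's: realize $a\widetilde{\ast}F$ as the restriction to the diagonal of a matrix product of $a$ with a matrix obtained by composing $F$ with a coordinate projection, and then use that the projection and the diagonal embedding are measurable because the $\sigma$-algebra on $V^{(2)}$ is the one induced by the product $\sigma$-algebra. However, as literally written your key identity fails: with your definition $b_F(x,y) = F(y)$ (second variable) one has $b_F(z,x) = F(x)$, so
$$(a\ast b_F)(x,x) \;=\; \sum_{z\approx x} a(x,z)\, b_F(z,x) \;=\; F(x)\sum_{z\approx x} a(x,z),$$
which is not $(a\widetilde{\ast}F)(x) = \sum_{z\approx x} a(x,z)F(z)$ in general (take $a = a_G$ and $F$ non-constant). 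Your displayed computation silently substitutes $b_F(z,x) = F(z)$, i.e., it uses the matrix $F\circ \mathrm{pr}_1$ depending on the \emph{first} variable of the pair. That is the correct choice, and it is precisely the paper's proof: there one sets $p_1(x,y) = x$, $j(x) = (x,x)$, and writes $a\widetilde{\ast}F = \bigl(a\ast(F\circ p_1)\bigr)\circ j$. Since $\mathrm{pr}_1$ is measurable by the same argument you gave for $\mathrm{pr}_2$, the repair is a one-symbol change, after which your argument coincides with the paper's, including the specialization $\deg = a_G\,\widetilde{\ast}\,1$.

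A minor economy: you do not need Proposition \ref{Measurability-basic-quantities}(b) (measurability of the diagonal as a \emph{subset} of $V^{(2)}$) anywhere; what your argument actually uses, and correctly states later, is measurability of the diagonal \emph{map} $\Delta : V \to V^{(2)}$, $x \mapsto (x,x)$, which holds simply because both components are the identity and the $\sigma$-algebra on $V^{(2)}$ is induced from the product. Invoking the measurability of the diagonal set is harmless but superfluous, and dropping it makes the proof identical in structure and length to the one in the paper.
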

\begin{proof} As the $\sigma$-algebra on $V^{(2)}$ is
the restriction of the product $\sigma$-algebra the maps
$$p_1 : V^{(2)}\to V,\:  (x,y) \mapsto x,\;\:\mbox{and}\;\:
\:  j : V\to V^{(2)}, \:  x\mapsto (x,x)$$ are measurable. Hence,
$$ a \widetilde{\ast} F = (a \ast (F\circ p_1)) \circ j$$ is measurable
 as a composition of measurable functions. Now, the last statement follows from $\mbox{deg} = a_G
\widetilde{\ast} 1$.
\end{proof}

%
%

The vertex set of a measurable graph can be disjointly decomposed
into measurable sets of vertices whose neighborhood looks like a
given isomorphism class $\alpha \in \mathcal{A}^D$. However, some
care has to be taken when the connected component of a vertex is
finite.  To this end, for $r\geq 0$ we define
$$\Vfr:= \{x \in V\, : \, \rho((G(x),x)) = r \}.$$

\begin{Proposition} \label{proposition:decomposition of V}
 Let $(G,\mathcal{B})$ be a measurable graph. For each $n \geq 0$ the vertex set $V$ can be written as
 $$V = \bigcup_{\alpha \in \mathcal{A}_n^D} V_\alpha \cup \bigcup_{r = 0}^{n-1} \Vfr,  $$
 where the occurring sets are measurable and pairwise disjoint. Furthermore, for each $s \leq r$ the set $\Vfr$ can be disjointly decomposed into
 $$\Vfr = \bigcup_{\alpha \in \mathcal{A}^D_s}  \Vfr_\alpha,$$
 where, of course, $\Vfr_{\alpha} = \Vfr \cap V_{\alpha}$.
\end{Proposition}
\begin{proof}
 All of the claimed properties follow easily from the measurability of the mappings $\pi_r$, $r \geq 0$, and the identity
 $$\Vfr = \bigcup_{\alpha \in \mathcal{A}^D_r} V_\alpha \setminus \left(\bigcup_{\beta \in \mathcal{A}^D_{r+1}} V_\beta  \right) =  \bigcup_{\alpha \in \mathcal{A}^D_r} \pi_r^{-1}(\alpha)  \setminus \left(\bigcup_{\beta\in \mathcal{A}^D_{r+1}} \pi_{r+1}^{-1}(\beta)  \right).$$
\end{proof}

\subsection{Graphs over groupoids and invariant measures} \label{subsection: graph over groupoid}
Here we  discuss groupoids, invariant measures on groupoids and
spaces over groupoids. In the measurable setting this can be found
e.g. in Connes' lecture notes \cite{Connes-1979}. Based on these
lecture notes a discussion of  random Schroedinger operators in this
context was then given in  \cite{LPV}. There, a specific situation
is singled out and studied in some detail. This situation is  called
'admissible setting' there. Here, we basically present a graph
version of the corresponding considerations centered around the
admissible setting in \cite{LPV}.

\medskip

\textbf{Notation.} As usual we will denote the set of  all
measurable functions on a measurable space by $\calF (X)$. The set
of non-negative measurable functions is then denoted by $\calF^+
(X)$. The set of all measures on $X$ is denoted by $\calM (X)$.

\medskip

A concise definition of a groupoid is that it is a small category in which every morphism is an isomorphism. A more detailed definition can then be given as follows, see e.g. \cite{Renault-80}.

\begin{Definition} \label{groupoid}
A triple $(\calG,\cdot, ^{-1})$ consisting of a set $\calG$, a
partially defined multiplication~$\cdot$, and an inverse
operation $^{-1}: \calG \to \calG$ is called a {\em groupoid} if the
following conditions are satisfied:
\begin{itemize}
\item $(g^{-1})^{-1}=g$ for all $g \in \calG$,
\item If $g_1 \cdot g_2$ and $g_2 \cdot g_3$ exist, then $(g_1 \cdot g_2) \cdot g_3$ and $g_1 \cdot ( g_2 \cdot g_3)$ exist as well and they are equal,
\item $g^{-1} \cdot g$ exists always and
$g^{-1} \cdot g \cdot h =  h$, whenever $g \cdot h$ exists, \item $h
\cdot h^{-1}$ exists always and $g \cdot h \cdot h^{-1} = g$,
whenever $g \cdot h$ exists. \end{itemize} \end{Definition}

A given groupoid $\calG$ comes along with the following standard
objects. The subset
$$\Omega:=\calG^{(0)} := \{ g \cdot g^{-1} \mid g \in \calG \}$$ is called the {\em
set of units}. \label{unitspace}

For $g \in \calG$ we define its {\em range} $r(g)$ by $r(g) = g
\cdot g^{-1}$ and its {\em source} by $s(g) = g^{-1} \cdot g$.
Moreover, we set $\calG^\omega = r^{-1}(\{ \omega \})$ for any unit
$\omega \in \calG^{(0)}$. One easily checks that $g \cdot h$ exists if
and only if $r(h) = s(g)$.

The groupoids under consideration will always be {\em measurable},
i.e., they possess $\sigma$-algebras  such that all relevant maps
are measurable. More precisely, we require that $\cdot:
\calG^{(2)}_G \to \calG$, $^{-1}: \calG \to \calG$, $s,r: \calG \to
\calG^{(0)}$ are measurable, where
\begin{equation*} \calG^{(2)}_G := \{ (g_1,g_2) \mid s(g_1) = r(g_2) \} \subset
\calG^2 \end{equation*} and $\calG^{(0)} \subset \calG$ are equipped
with the induced $\sigma$-algebras. Furthermore, we assume that
singletons $\{\omega\}$ with $\omega \in \mathcal{G}^{(0)}$ are
measurable as subsets of $\mathcal{G}^{(0)}$.   In this way,
$\calG^\omega \subset \calG$ become measurable sets (and thus
measurable spaces).

\medskip

\begin{Definition}[Graph over $\calG$]
 Let $\calG$ be a measurable  groupoid with the
previously introduced notations. A triple $(G, \pi, J)$ consisting
of a measurable graph $G$ with vertex set $V$   and maps $\pi$ and
$J$ is called a \textit{graph over $\calG$}  if the following
properties are satisfied.

\begin{itemize}
\item The map  $\pi: V \to \Omega$ is measurable.

\item For any $\omega\in \varOmega$ the induced graph $G^\omega$ on the vertex set
$$V^\omega: = \pi^{-1}(\{ \omega
\})$$ is countable.

\item The map  $\eta : \Omega \to
\calM (V), \eta^\omega :=\sum_{y\in\pi^{-1} (\omega)} \delta_y$, is
measurable in the sense that for any measurable $F : V
\to[0,\infty]$, the map $\Omega \to [0,\infty], \omega \mapsto
\eta^\omega (F)$,  is measurable.

\item The map
$J$ assigns, to every $g \in \calG$, a graph isomorphism  $J(g):
G^{s(g)} \to G^{r(g)}$  with the properties $J(g^{-1})=J(g)^{-1}$
and $J(g_1 \cdot g_2) = J(g_1) \circ J(g_2)$ if $s(g_1) = r(g_2)$.
\end{itemize}
The map $\eta$ is called the \textit{canonical random variable}.
\label{canonical}
\end{Definition}

%

\textbf{Notation.}  To simplify notation, we will often write $g h$
respectively $g x$ for $g \cdot h$ respectively $J(g) x$.

\medskip

Our next aim is to exhibit natural measures on these objects. The
first step in this direction is the definition of a transverse
function.

\begin{Definition}[Transversal function]\label{transversal}
Let $\calG$ be a measurable groupoid and with the  notation given
above. A {\em transversal function} $\nu$ of $\calG$ is a map $\nu:
\Omega \to \calM(\calG)$ with the following properties:
\begin{itemize}
\item The map $\omega \mapsto \nu^\omega(f)$ is measurable for every
$f \in \calF^+(\calG)$.
\item $\nu^\omega$ is supported on $\calG^\omega$, i.e.,
$\nu^\omega(\calG - \calG^\omega) = 0$.
 \item $\nu$ satisfies
the following invariance condition
\begin{equation*} \int_{\calG^{s(g)}} f(g \cdot h) d\nu^{s(g)}(h) =
\int_{\calG^{r(g)}}
f(k) d\nu^{r(g)}(k) \end{equation*} for all $g \in \calG$ and $f \in
\calF^+(\calG^{r(g)})$.
\end{itemize}

\end{Definition}

In the examples which will be considered later in the present paper,
the measures $\nu^{\omega}$ will be simple counting measures defined
on countable fibers $r^{-1}(\omega)$.

\medskip

In the next definition we introduce appropriate measures on the base
space $\Omega$ of an abstract groupoid $\calG$.

\begin{Definition}[Invariant measure]  Let $\calG$ be a measurable  groupoid
with a transversal function $\nu$. A measure $m $ on the base space
$(\Omega,\calB_\Omega)$ of units is called {\em $\nu$-invariant} (or
simply \textit{invariant}, if there is no ambiguity in the choice of
$\nu$) if
\begin{equation*} m  \circ \nu = (m  \circ \nu)^\sim, \end{equation*}
where $m \circ \nu$ is the measure on $\calG$ defined by
 $(m  \circ \nu)(f) = \int_\Omega \nu^\omega(f) dm (\omega)$ for
 measurable $f : \calG\longrightarrow [0,\infty]$
and $(m  \circ \nu)^\sim(f) = (m  \circ \nu)(\tilde f)$ with $\tilde
f(g) = f(g^{-1})$.
\end{Definition}

Analogously to transversal functions on the groupoid, we introduce a
corresponding fiberwise consistent family $\alpha$ of measures on a
graph over a groupoid.

\begin{Definition}[Random variable in the sense of Connes]
Let $\calG$ be a measurable  groupoid and $G$ a  graph over $\calG$
with vertex set $V$. A choice of measures $\xi: \Omega \to \calM(V)$
is called a {\em random variable}  with values in $G$ (in the sense
of Connes) if it has the following properties:
\begin{itemize}
\item The map $\omega \mapsto \xi^\omega(f)$ is measurable
for every $f \in \calF^+(V)$,
\item $\xi^\omega$ is supported on $G^\omega$, i.e.,
$\xi^\omega(V - V^\omega) = 0$,
\item $\xi$ satisfies the following invariance condition
\begin{equation*} \int_{V^{s(g)}} f(J(g)x) d\xi^{s(g)}(x) = \int_{V^{r(g)}}
f(y) d\xi^{r(g)}(y) \end{equation*} for all $g \in \calG$ and $f \in
\calF^+(V^{r(g)})$. \end{itemize} \end{Definition}

\begin{Remark}
Let $\xi$ be a random variable.
\begin{itemize}
\item By considering (positive) linear combinations of
functions of the form
$$F : V^2 \to [0,\infty],  (x,y)
\mapsto  f(x) g(y),$$ with measurable $f, g: V\to [0,\infty]$ and
using standard monotone class arguments, we can easily obtain
measurability of
$$V\times \Omega\to [0,\infty], (x,\omega)\to \xi^\omega (F(x,
\cdot)),$$ for any measurable $ F: V\times V\to [0,\infty]$.

\item  If furthermore   $V^{(2)}$ is a measurable subset of $V \times V$, then
any measurable function $F$ on $V^{(2)}\to [0,\infty]$
can be extended (by zero) to a measurable function on $V\times V$.
Thus, in this case we obtain measurability of $V\times
\Omega\to [0,\infty], (x,\omega)\to \xi^\omega (F(x,
\cdot))$ for any measurable $F : V^{(2)}\to [0,\infty]$.
\end{itemize}
\end{Remark}

\medskip

We can actually provide a large supply of  random variables.

\begin{Proposition}[Generating random variables] \label{Generating} Let $\calG$ be a measurable  groupoid and $G$ a  graph over $\calG$
with vertex set $V$. Then, the canonical random variable $\eta$ is a
random variable. Moreover, for  any measurable $ H : V
\to [0,\infty]$ with $H (g x) = H(x)$ for all $x \in V$
and $g\in \calG$ with $\pi (x) = s(g)$ the map
$$\xi_H : \varOmega \to \calM(V),\, \xi_H^\omega
:=\sum_{y\in \pi^{-1} (\omega)} H(y) \delta_y$$ is a random variable.
\end{Proposition}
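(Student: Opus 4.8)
The plan is to verify the three defining properties of a random variable in the sense of Connes directly for $\xi_H$, and then to observe that the assertion about the canonical random variable $\eta$ is simply the special case $H\equiv 1$. Indeed, the constant function with value $1$ trivially satisfies the invariance hypothesis $H(gx)=H(x)$, and for this choice $\xi_H^\omega = \sum_{y\in\pi^{-1}(\omega)}\delta_y = \eta^\omega$, so no separate argument for $\eta$ is needed once $\xi_H$ has been treated.

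For the measurability and the support condition I expect only routine work. For any measurable $f\in\calF^+(V)$ one has
$$\xi_H^\omega(f)=\sum_{y\in\pi^{-1}(\omega)} H(y) f(y)=\eta^\omega(H\cdot f),$$
and since $H\cdot f$ is again a measurable nonnegative function on $V$, the measurability of $\omega\mapsto\xi_H^\omega(f)$ follows at once from the measurability of $\eta$ that is built into the definition of a graph over $\calG$. The support condition $\xi_H^\omega(V-V^\omega)=0$ is immediate, since $\xi_H^\omega$ is by construction a sum of point masses indexed by $\pi^{-1}(\omega)=V^\omega$.

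The heart of the matter is the invariance condition. Fixing $g\in\calG$ and $f\in\calF^+(V^{r(g)})$ and writing both integrals as sums over the respective fibers, the left-hand side becomes
$$\int_{V^{s(g)}} f(J(g)x)\, d\xi_H^{s(g)}(x)=\sum_{x\in V^{s(g)}} H(x) f(J(g)x),$$
while the right-hand side equals $\sum_{y\in V^{r(g)}} H(y) f(y)$. Since $J(g):V^{s(g)}\to V^{r(g)}$ is a bijection, I would substitute $y=J(g)x$, i.e.\ $x=J(g^{-1})y = g^{-1}y$, turning the left-hand sum into $\sum_{y\in V^{r(g)}} H(g^{-1}y) f(y)$. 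It then remains to identify $H(g^{-1}y)$ with $H(y)$ for every $y\in V^{r(g)}$, and this is precisely where the invariance hypothesis on $H$ enters, applied to the element $g^{-1}$ rather than to $g$: one checks $s(g^{-1})=g\cdot g^{-1}=r(g)$, so that $\pi(y)=r(g)=s(g^{-1})$ holds for $y\in V^{r(g)}$, whence the hypothesis $H(hx)=H(x)$ with $h=g^{-1}$ and $x=y$ yields $H(g^{-1}y)=H(y)$. After this substitution both sides coincide and the invariance condition is established.

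The only point requiring genuine care — and the one I would flag as the main obstacle — is the source/range bookkeeping in this last step: one must remember to apply the invariance of $H$ with the \emph{inverse} element $g^{-1}$ and to verify $s(g^{-1})=r(g)$, so that the fiber $V^{r(g)}$ over which $y$ ranges is indeed the admissible domain for the invariance relation. Everything else is a direct unwinding of the definitions.
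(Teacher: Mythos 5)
Your proposal is correct and follows essentially the same route as the paper: measurability of $\xi_H$ via the identity $\xi_H^\omega(f)=\eta^\omega(Hf)$ together with the measurability of $\eta$ built into the definition of a graph over $\calG$, and invariance of $\xi_H$ from the invariance of $H$ (which the paper dismisses as ``clear'' while you spell out the substitution $y=J(g)x$ and the bookkeeping $s(g^{-1})=r(g)$, correctly). Your treatment of $\eta$ as the special case $H\equiv 1$ is a harmless reorganization of the paper's remark that $\eta$ is a random variable by definition; if anything it is slightly more careful, since the definition directly asserts only the measurability of $\eta$, while its invariance requires the bijectivity of $J(g)$, which your general computation supplies.
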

\begin{proof} The canonical random variable $\eta$ is a random variable by the
very definition of a  graph over a groupoid. Now, consider a
measurable  $H : V\to [0,\infty]$.  We have to show a measurability
and an  invariance property of $\xi_H$. Now, whenever $ F  : V\to
[0,\infty]$ is measurable, then so is $ F H : V\to [0,\infty]$.
Hence,
$$\omega \mapsto \xi_H^\omega  = \eta^\omega (HF)$$
is measurable by the assumption on $\eta$. The invariance of $\xi_H$
is clear from the invariance property of $H$.
\end{proof}

\begin{Remark} If the diagonal $\{(x,x) : x\in V\}$ is a measurable
subset of $V \times V$, then any random variable arises in this way.
Indeed, in the case  the previous remark applied with $F$ the
characteristic function of the diagonal, gives easily that for any
random variable $\xi$ the function $H(x) = \xi^{\pi(x)}
(F(x,\cdot))$ is measurable and invariant. By construction $H$ is
the density of $\xi$ with respect to $\eta$.
\end{Remark}

\medskip

Whenever $G$ is a graph over a groupoid $\calG$ with transverse
$\nu$, we use the following notation for the \textit{convolution} of
a $w \in \calF^+(V )$ with respect to $\nu$
\begin{displaymath}
\nu\ast w (x) := \int_{\calG^{\pi(x)}} w (g^{-1} x) d\nu^{\pi(x)}
(g) \quad \text{ for } x \in V .
\end{displaymath}

A crucial fact about the integration of random variables is given in
the following lemma, essentially contained in (the proof of) Lemma
III.1 in \cite{Connes-1979}, see Lemma 2.9 of \cite{LPV} as well.

\begin{Lemma}\label{ind}
 Let $\calG$ be a measurable groupoid with transversal function $\nu$
 and $\nu$-invariant measure $m $. Let  $G $ be a graph over $\calG$.  \\
 (a) The integral
 $\int_\Omega \nu^\omega(f)\,dm (\omega)$ does not depend
 on $f \in \calF^+ (\calG)$, provided $f$ satisfies
 $\nu(\tilde f) \equiv 1$. \\
 (b) For a given random variable $\xi$ with values in $G $ the
 integral $\int_\Omega \xi^\omega(\av)\, dm (\omega)$ does not depend on
 $\av$, provided $\av\in \calF^+ (V )$ satisfies $\nu\ast \av \equiv 1$.
\end{Lemma}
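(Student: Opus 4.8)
The plan is to treat both parts in a unified way, the crucial input in each case being that $\nu$-invariance of $m$ is precisely the statement that the measure $m\circ\nu$ on $\calG$ is invariant under the inversion $g\mapsto g^{-1}$; that is, $(m\circ\nu)(f)=(m\circ\nu)(\tilde f)$ for every $f\in\calF^+(\calG)$, where $\tilde f(g)=f(g^{-1})$. With this reformulation in hand, the proof of each part reduces to exhibiting a \emph{symmetric} bilinear expression $B$ such that the integral in question equals $B$ evaluated on the given function together with an arbitrary second normalised function; symmetry of $B$ then forces independence of the integral on the choice of representative.

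For part (a), let $f_1,f_2\in\calF^+(\calG)$ with $\nu(\tilde f_1)=\nu(\tilde f_2)\equiv 1$. I would start from $\int_\Omega\nu^\omega(f_1)\,dm(\omega)=(m\circ\nu)(f_1)$ and insert the factor $1=\nu^{s(g)}(\tilde f_2)=\int_{\calG^{s(g)}}f_2(h^{-1})\,d\nu^{s(g)}(h)$ inside the integral over $g\in\calG^\omega$. Applying the transversal invariance of $\nu$ (with the element $g$, to the function $k\mapsto f_2(k^{-1}g)$ on $\calG^{r(g)}$, using $(gh)^{-1}g=h^{-1}$) replaces this by $\int_{\calG^\omega}f_2(k^{-1}g)\,d\nu^\omega(k)$, identifying $(m\circ\nu)(f_1)$ with the symmetric candidate
\[
B(f_1,f_2):=\int_\Omega\int_{\calG^\omega}\int_{\calG^\omega}f_1(g)\,f_2(k^{-1}g)\,d\nu^\omega(k)\,d\nu^\omega(g)\,dm(\omega).
\]
To see $B(f_1,f_2)=B(f_2,f_1)$, I would, for fixed $k\in\calG^\omega$, apply transversal invariance once more (now with the element $k^{-1}$, to $l\mapsto f_1(kl)f_2(l)$) to rewrite the inner $g$-integral as $\int_{\calG^{s(k)}}f_1(kl)f_2(l)\,d\nu^{s(k)}(l)$. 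This exhibits $B(f_1,f_2)=(m\circ\nu)(R)$ for $R(k)=\int_{\calG^{s(k)}}f_1(kl)f_2(l)\,d\nu^{s(k)}(l)$; invoking the inversion invariance $(m\circ\nu)(R)=(m\circ\nu)(\tilde R)$ and reading off $\tilde R(k)=\int_{\calG^{r(k)}}f_1(k^{-1}l)f_2(l)\,d\nu^{r(k)}(l)$ gives exactly $B(f_2,f_1)$ after relabelling. Hence $(m\circ\nu)(f_1)=B(f_1,f_2)=B(f_2,f_1)=(m\circ\nu)(f_2)$.

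Part (b) runs completely parallel, with the random variable $\xi$ playing the role of the second copy of $\nu$ and its invariance replacing the transversal invariance at the symmetrisation step. For $u_1,u_2\in\calF^+(V)$ with $\nu\ast u_1=\nu\ast u_2\equiv 1$ I would insert $1=\nu\ast u_2(x)=\int_{\calG^{\pi(x)}}u_2(g^{-1}x)\,d\nu^{\pi(x)}(g)$ for $x\in V^\omega$ into $\int_\Omega\xi^\omega(u_1)\,dm(\omega)$, obtaining the candidate $B_\xi(u_1,u_2):=\int_\Omega\int_{V^\omega}\int_{\calG^\omega}u_1(x)u_2(g^{-1}x)\,d\nu^\omega(g)\,d\xi^\omega(x)\,dm(\omega)$. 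Using the invariance of $\xi$ (applied to $x\mapsto u_1(x)u_2(g^{-1}x)$, together with $g^{-1}gy=y$ on $V^{s(g)}$) the inner $\xi$-integral becomes $\int_{V^{s(g)}}u_1(gy)u_2(y)\,d\xi^{s(g)}(y)$, so that $B_\xi(u_1,u_2)=(m\circ\nu)(S)$ with $S(g)=\int_{V^{s(g)}}u_1(gy)u_2(y)\,d\xi^{s(g)}(y)$; inversion invariance of $m\circ\nu$ then yields $B_\xi(u_1,u_2)=B_\xi(u_2,u_1)$ and hence the asserted independence. Throughout, all integrands are non-negative, so every interchange of integrals is justified by Tonelli and no integrability issue arises — this is exactly why the statement is phrased for $\calF^+$. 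I expect the only genuine bookkeeping obstacle to be the measurability of the auxiliary functions $R$ and $S$ (needed before the inversion invariance of $m\circ\nu$ can be applied) together with the careful verification of which groupoid products exist and how the units act; both are routine consequences of the measurability axioms for $\calG$, $\nu$ and $\xi$ and of the groupoid relations, and are precisely the points handled in Connes' Lemma III.1 in \cite{Connes-1979}.
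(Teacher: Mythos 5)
Your proof is correct and matches the paper's own treatment: the paper gives no in-text argument for this lemma but delegates it to (the proof of) Lemma III.1 in \cite{Connes-1979} (see also \cite{LPV}), and your symmetrization — inserting the normalization of the second function, using transversal (resp.\ random variable) invariance to produce the symmetric expressions $B(f_1,f_2)$ and $B_\xi(u_1,u_2)$, and then invoking $(m\circ\nu)=(m\circ\nu)^\sim$ on the auxiliary functions $R$ and $S$ — is precisely that classical argument, with Tonelli legitimately covering all interchanges since everything is non-negative. The measurability of $R$ and $S$ is, as you say, the only point relying on the underlying measurable-structure axioms, which is exactly where the cited references do the corresponding bookkeeping.
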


The previous lemma gives the possibility to define an integral over
a random variable.

\begin{Definition}[Integral of a random variable] \label{Integral-random-variable} Let $\calG$ be a measurable groupoid
with transversal function $\nu$
 and $\nu$-invariant measure $m $. Let  $G $ be a
 graph over $\calG$ with vertex set $V$  such that there exists a $\av\in \calF^+
(V)$ satisfying $\nu\ast \av \equiv 1$.
Then, the integral over the random variable $\xi$ is denoted as
$\int \xi$ and  defined via
$$\int \xi= \int_\Omega \xi^\omega (\av) dm (\omega),$$
where $\av$ is any (not necessarily strictly positive) element of $\calF^+ (V)$ satisfying $\nu\ast \av
\equiv 1$.
\end{Definition}

The functions $\av$ appearing in the previous definition deserve a
special name.

\begin{Definition}[Averaging function]\label{averaging}
Let $\calG$ be a measurable groupoid with transversal function $\nu$ and $\nu$-invariant measure $m $. Let  $G$ be a
 graph over $\calG$ with vertex set $V$. Then, any  function  $\av\in \calF^+
(V)$ satisfying $\nu\ast \av \equiv
 1$ is called an \textit{averaging function}.
\end{Definition}

We note that existence of an averaging function is quite essential
to our approach. In fact, it is only via these averaging functions
that the integration of random variables could be  defined above.
 On the conceptual level  the existence of an averaging function
$\av$ can be understood as giving a way to relate  $\calG$ and $G$ via
a map  from $\calF (\calG)$ to  $\calF (V).$
 Namely, every  averaging function
$\av$ gives rise to the fiberwise defined map $$q = q_{\av}\colon
\calF(\calG) \to \calF(V)$$ with
\begin{equation*}
q(f)(x) := \int_{\calG^{\pi(x)}} \av(g^{-1}x) f(g) d\nu^{\pi(x)}(g)
\end{equation*}
for all $x \in V$. Note the defining property of  $\av$ implies that
the map $q$ satisfies $q(1_\calG) = 1_V$ (see Section 2  of
\cite{LPV} for further discussion).

\subsection{Measure graphs}
After all these preparations we can now introduce the main concept
of our study. We will need one   further assumption  on countability
of generators of the $\sigma$-algebras in question in order to apply
the integration theory developed in \cite{Connes-1979}.

\begin{Definition}[Measure graph over a groupoid] \label{admissett}
A pair $(G,M)$ consisting of a measurable  graph $G$ and a measure
$M$ on the vertices of $G$ is called a  \textit{measure graph over
the  groupoid $\calG$}  or just \textit{measure graph} for short if
the following properties hold:

\begin{itemize}
\item $\calG$ is a measurable groupoid and  $G$ is a graph over $\calG$.

\item $\calG$ admits a transversal function $\nu$ together with an
invariant measure $m$  and $M = m \circ \eta$ (where $\eta$ is the
canonical random variable).

\item The $\sigma$-algebras of both
 $V$ and  $\varOmega$ possess a countable basis of sets, all of
which have finite measure w.r.t.~$M$ (respectively w.r.t.~$m $).

\item
There exists a strictly positive averaging   function $\av$.

\item The canonical random variable $\eta$ is integrable, i.e.
$\int \eta = \int_\varOmega \eta^\omega(\av) dm (\omega) = \\
\int_V \av(x) dM (x) < \infty$.

\end{itemize}
The measure $m$ will be called the \textit{invariant measure
underlying the measure $M$} and $\nu$ will be called the
\textit{underlying transversal} on the groupoid.
\end{Definition}

\begin{Remark}
The above assumptions ensure that our setup falls within the general
framework of \cite{Connes-1979} (as specified in  Definition 2.6 of
\cite{LPV}). This will enable us to use the associated von Neumann
algebras and to use integration theory. The countability assumptions
on the $\sigma$-algebras mean that the condition of being 'propre'
in the sense of \cite{Connes-1979} is satisfied. Integrability of
the canonical random variable will yield a $\sigma$-finite weight
(even a trace) on the corresponding von Neumann algebra and strict
positivity of the averaging function will entail faithfulness of
this weight.
\end{Remark}

For our further considerations two features of measure graphs will
be crucial (see below):

\begin{itemize}
\item
Any measure graph comes with a natural Hilbert space $L^2 (V,M)$. We
will be concerned with operators on this Hilbert space.

\item Any measure graph naturally allows for
integration of random variable as it possesses  by its very
definition an averaging function.
\end{itemize}

\subsection{Some examples of  measure graphs}\label{Examples} In this section we discuss some  examples of
measure graphs over groupoids. In particular, we discuss the integration of their random variables.

\subsubsection{Finite graphs} Let $G$ be a finite connected  graph
with vertex set $V$. Equip $V$ with the discrete $\sigma$-algebra, which is finite.
 Let $\calG$ be the trivial groupoid (consisting only of one element $e$) acting on $G$
as the identity. Its set of units $\Omega =\{e\}$ consists of only one point and $\pi : V\to \{e\}$ is measurable.  Let $\nu$ be the  measure giving value $1$ to  the  point $\{e\}$. Then, the measure
$m$  giving value $1/C$ to the point $\{e\}$ is invariant and the constant function $\av = 1$ satisfies $\nu \ast \av \equiv 1$. The measure $M = m \circ \eta$ on $V$ is the counting measure on the vertex set normalized by $1/C$. In this way, $(G,M)$ is a measure graph over $\calG$. Furthermore, all random variables are given by functions on the vertex set and their integral equals the sum of their values normalized by $1/C$.

\subsubsection{Periodic graphs}

We first recall the notion of a periodic graph.
\begin{Definition}[Periodic graph] \label{periodic graph}
 A pair $(G,\Gamma)$ consisting of a countable infinite connected graph $G = (V,E)$ and a countable subgroup $\Gamma$ of the automorphism group of $G$ is called {\em periodic graph}, if
 \begin{itemize}
  \item $\Gamma$ acts on $V$ properly, i.e., for each $x \in V$ the stabilizer $\Gamma_x = \{\gamma \in \Gamma\, : \, \gamma x = x\}$ is finite.
  \item $\Gamma$ acts on $V$ with finite co-volume, i.e., there is a set $F \subset \Gamma$ containing exactly one representative of each of the classes of $V/\Gamma$
  satisfying $\sum_{f \in F} \frac{1}{|\Gamma_f|} < \infty$.
 \end{itemize}
The subset $F$ is called a  {\em fundamental domain}.
\end{Definition}

Let $(G,\Gamma)$ be a periodic graph.  We equip its vertex set $V$
with the discrete $\sigma$-algebra, which is countably generated. Let
$\calG$ be given by $\Gamma$ with the corresponding action on $G$.
The space of units $\Omega$ consists exactly of the neutral element
$e$ of $\Gamma$ and the map $\pi : V\to \{e\}$ is clearly
measurable. A transversal function  $\nu = \nu^e$ is given by the
counting measure on $\Gamma$. The measure $m$ giving mass $1$ to the
set $\{e\}$ is invariant. Fix a fundamental domain $F$ of finite co-volume
of the action of $\Gamma$ and define the function $\av_0$ on $V$ by
$$\av_0 (x) := \frac{1}{ |\Gamma_x|} 1_{F} (x).$$
Here $|\Gamma_x|$ denotes the number of elements of the
stabilizer of $x$ and $1_F$ is the characteristic function of $F$.
A short calculation shows that $\nu \ast \av_0 (x) = 1$ for
all $x\in V$, i.e., $\av_0$ is an averaging function. It
is not strictly positive,  though. However, for any $\gamma \in
\Gamma$, the function $\av_\gamma(\cdot) := \av_0 (\gamma \cdot)$ is
  an averaging function as well. So, if we chose a sequence
$(c_\gamma)_{\gamma \in \Gamma}$ of positive numbers with $\sum
c_\gamma = 1$, then
$$ \av :=\sum_{\gamma \in\Gamma } c_\gamma \av_\gamma$$
will be a strictly positive averaging function. The measure $M = m \circ \eta$
is  just the counting measure on the vertex set. In this
way, $(G,M)$ is a measure graph over $\calG = \Gamma$.

Recall the definition of the random variable $\xi_F$ for a
$\Gamma$-invariant function $F:V \to [0,\infty]$ (see
Lemma~\ref{Generating}). By Lemma~\ref{ind} its integral satisfies
$$\int \xi_F = \int_V F \av {\rm d} M = \int_V F \av_0 {\rm d} M = \sum_{x \in F} \frac{F(x)}{|\Gamma_x|}.$$
%

%
%

\subsubsection{Percolation graphs} \label{expample:percolation graphs}

Let $\Gamma$ be a finitely generated group with a symmetric set of generators $I = \{g_1,\ldots,g_l,g_1^{-1},\ldots,g_l^{-1}\}$ and let $e$ be its neutral element.  By $G_0$ we denote  the corresponding right Cayley graph, i.e., $\gamma, \gamma' \in \Gamma$ are neighbors in $G_0$ if and only if there exists $g \in I$ such that $\gamma = \gamma' g.$ The left-action of $\Gamma$ on itself induces graph isomorphisms of $G_0$.  We perform vertex percolation on $G_0$ and construct a measure graph from its realizations. We equip the space
$$\Theta:= \{0,1\}^\Gamma$$
with the product $\sigma$-algebra (which is countably generated) and define the shift-action of $\Gamma$  by
$$\Gamma\times \Theta \to \Theta,\, (\gamma,\theta) \mapsto \gamma \theta := \theta (\gamma^{-1} \cdot).$$
Each element $\theta \in \Theta$ is identified with the graph $G_\theta = (\Gamma, E_\theta)$  whose edge set is
$$E_\theta = \{(\gamma,\gamma') \in \Gamma \times \Gamma\, : \, \gamma \sim \gamma \text{ in } G_0 \text{ and } \theta(\gamma) = \theta(\gamma') = 1\}.$$
In other words, $G_\theta$ is obtained from $G_0$ by deleting all edges that are adjacent to a vertex $\gamma \in \Gamma$ which satisfies $\theta(\gamma) = 0$.

The vertex set of the percolation measure graph is defined by $V := \Gamma \times \Theta$ and equipped with the product $\sigma$-algebra (here $\Gamma$ carries the discrete $\sigma$-algebra). We say that $(\gamma,\theta)$ and $(\gamma',\theta')$ are adjacent if and only if $\theta = \theta'$ and $\gamma \sim \gamma'$ in $G_\theta$. The so obtained graph is denoted by $G_{\Gamma,{\rm perc}}$.

\begin{Proposition}
 $G_{\Gamma,{\rm perc}}$ is a measurable graph.
\end{Proposition}
\begin{proof}
 We first show that $\pi_r :V \to \mathcal{A}^D$ is measurable. For $\alpha \in \mathcal{A}^D$ we obtain
 \begin{align*}
  \pi^{-1}_r(\alpha) &= \{(\gamma,\theta)\in \Gamma\times \Theta\,:\, B^{G_{\Gamma,{\rm perc}}}_r((\gamma,\theta)) \in \alpha \}\\
  &= \{(\gamma,\theta)\in \Gamma\times \Theta\,:\, B^{G_\theta}_r(\gamma) \in \alpha \}\\
  &= \{(\gamma,\theta)\in \Gamma\times \Theta\,:\, B^{G_{\gamma^{-1}\theta}}_r(e) \in \alpha \}\\
  &= \bigcup_{\gamma \in \Gamma} \left[\{\gamma\} \times \gamma \{ \theta \in \Theta\, : \, B^{G_\theta}_r(e) \in \alpha \}\right].
 \end{align*}
The set  $\{ \theta \in \Theta\, : \, B^{G_\theta}_r(e)\in \alpha
\}$ is a finite  union of  cylinder sets in $\Theta$. In particular,
it is measurable. Since the $\Gamma$-action on $\Theta$ is also
measurable we obtain the measurability of  $ \pi^{-1}_r(\alpha)$.

\medskip

The measurability of the adjacency matrix on $V^{(2)}$  is a consequence of

 \begin{align*}
  a_{G_{\Gamma,{\rm perc}}}^{-1}(\{1\}) &= \{ ((\gamma,\theta),(\gamma', \theta'))\in V^{(2)}\, : \, \theta = \theta' \text{ and } \gamma \sim \gamma' \text{ in } G_{\theta}\}\\
  &= V^{(2)} \cap  \{ ((\gamma,\theta),(\gamma', \theta'))\in V\times V\, : \, \text{there ex. } g \in I \text{ s.t. } \gamma' = \gamma g \} \\
  &= V^{(2)} \cap \bigcup_{(\gamma,g) \in \Gamma \times I} \left[ ( \{\gamma\} \times \Theta) \times (\{\gamma g\} \times \Theta) \right].
 \end{align*}
 Here we have used the following two facts.
 \begin{itemize}
  \item $((\gamma,\theta),(\gamma', \theta'))\in V^{(2)}$ implies $\theta = \theta ',$
  \item if $\gamma$ and $\gamma'$ belong to the same connected component in $G_\theta$ and satisfy $\gamma' = \gamma g$ for some $g \in I$ then $\gamma \sim \gamma'$ in $G_\theta.$
 \end{itemize}
It remains to show the measurability of the matrix products on
$V^{(2)}$. To this end, let measurable $a,b:V^{(2)} \to [0,\infty]$
be given. For fixed $\gamma \in \Gamma$ we define
$$a_\gamma:V^{(2)} \to [0,\infty],\, ((\gamma',\theta),(\gamma'',\theta)) \mapsto \begin{cases}
                                                                                 a(((\gamma',\theta),(\gamma,\theta))) &\text{if }  (\gamma',\theta)\approx (\gamma,\theta)\\
                                                                                 0 &\text{else}
                                                                                 \end{cases}
$$
and
$$b^\gamma:V^{(2)} \to [0,\infty],\, ((\gamma',\theta),(\gamma'',\theta)) \mapsto \begin{cases}
                                                                                 b(((\gamma,\theta),(\gamma'',\theta))) &\text{if }  (\gamma'',\theta)\approx (\gamma,\theta)\\
                                                                                 0 &\text{else}
                                                                                 \end{cases}.
$$
These functions satisfy
$$a \ast b = \sum_{\gamma \in \Gamma} a_\gamma b^\gamma.$$
Therefore, it suffices to show the measurability of $a_\gamma$ and
$b^\gamma$. Let $I = (C,\infty]$ with $C \in [0,\infty]$. We obtain
\begin{align*}
a_\gamma^{-1}(I) &= a^{-1}(I) \cap \{((\gamma',\theta),(\gamma,\theta)) \, :\, (\gamma',\theta) \in \Gamma\times \Theta \text{ and }(\gamma',\theta) \approx (\gamma,\theta)\}  \\
&= a^{-1}(I) \cap \bigcup_{\gamma' \in \Gamma} \left[\left(\{\gamma'\} \times \Theta\right) \times \left( \{\gamma\} \times \Theta\right)\right].
\end{align*}
For the last identity we used  $a^{-1}(I) \subseteq V^{(2)}$ and
\begin{align*}
 &\{((\gamma',\theta),(\gamma,\theta)) \, :\, (\gamma',\theta) \in \Gamma\times \Theta \text{ and }(\gamma',\theta) \approx (\gamma,\theta)\} \\&= V^{(2)}\cap  \bigcup_{\gamma' \in \Gamma} \left[\left(\{\gamma'\} \times \Theta\right) \times \left( \{\gamma\} \times \Theta\right)\right].
\end{align*}
This shows the measurability of $a_\gamma$ and a similar reasoning yields the measurability of $b^\gamma$.
\end{proof}

The groupoid which acts upon $G_{\Gamma,\, {\rm perc}}$ is the semidirect product $\Gamma \ltimes \Theta$ with respect to the shift-action of $\Gamma$. More precisely, we let $\mathcal{G} = \Gamma\times \Theta$ and define a partially defined multiplication on $\mathcal{G}$ by
$$(\gamma,\theta) \cdot (\gamma',\theta') = (\gamma \gamma',\theta) \text{ iff } \theta = \gamma \theta'$$
and an inverse operation by
$$(\gamma,\theta)^{-1} = (\gamma^{-1},\gamma^{-1}\theta).$$
The set of units  $\Omega = \mathcal{G}^{(0)} = \{(e,\theta)\, : \, \theta \in \Theta\}$ is tacitly identified with $\Theta$ in the obvious manner.

We now give $G_{\Gamma, {\rm perc}}$ the structure of a graph over $\mathcal{G}$. The mapping $\pi: V \to \Theta,\, (\gamma,\theta) \mapsto \theta$ is clearly measurable and the canonical random variable $\eta$ is measurable as well.  As a set, $\mathcal{G}$ coincides with the vertex set $V$ of the graph $G_{\Gamma, {\rm perc}}$. Hence, the groupoid acts naturally upon $V$ by left multiplication. The induced graph on the fiber $\pi^{-1}(\{\theta\})$ is isomorphic to $G_\theta$. Since left-multiplication with $\gamma$ induces a graph isomorphism  $G_{\gamma^{-1}\theta} \to G_\theta$ the groupoid acts by graph isomorphisms between the fibers of $\pi$. Therefore, $G_{\Gamma, {\rm perc}}$ is a graph over $\mathcal{G}.$

A transversal function of $\mathcal{G}$ is
$$\nu:\Theta \to \mathcal{M}(\mathcal{G}),\, \nu^\theta := \sum_{\gamma\in \Gamma} \delta_{(\gamma,\,\theta)}.$$
It can easily be checked that any probability measure $\mathbb{P}$
on $\Theta$ which is invariant with respect to the $\Gamma$-shift is
invariant with respect to $\nu$, see  \cite[Cor.~II.7]{Connes-1979}
as well. We fix such a measure and let $M_{\mathbb{P}} = \mathbb{P}
\circ \eta$. For any $\gamma \in \Gamma$ the function $\av_\gamma :=
1_{\{\gamma\} \times \Theta}$ is an averaging function. Taking
suitable linear combinations yields a strictly positive averaging
function $\av$. In this way, $(G_{\Gamma, {\rm perc}},M_\mathbb{P})$
is a measure graph over $\mathcal{G}$.

Recall the definition of the random variable $\xi_F$ for a $\mathcal{G}$-invariant function $F:V \to [0,\infty]$ (see Lemma~\ref{Generating}). By Lemma~\ref{ind} its integral satisfies
$$\int \xi_F  = \int_V F \av {\rm d}M_\mathbb{P} =  \int_V F \av_e {\rm d}M_\mathbb{P} = \int_\Theta F((e,\theta)) {\rm d}\mathbb{P}.$$

\subsubsection{Graphings}\label{example:graphings}
Graphings have attracted quite some attention. In particular, weakly
convergent graph sequences can be shown to have graphings as limits
(see e.g. \cite{Ele}). As discussed next, they also fall within our
framework.

Recall that a \textit{graphing}, $\mathcal{X} = (X,\mu,
\iota_1,\ldots, \iota_N)$,  consists of a compact metric space $X$
equipped with the Borel-$\sigma$-algebra  together with continuous
involutions $\iota_1,\ldots,\iota_N:X \to X$ and a Borel probability
measure $\mu$ on $X$ which is invariant under the $\iota_j$,
$j=1,\ldots, N$.

To a given graphing $\mathcal{X}$ we associate a measure graph. Let $\mathcal{J} := \langle \iota_1,\ldots,\iota_n\rangle$ be the group generated by the involutions. By  $\mathcal{R}(\mathcal{X})$ we denote the induced equivalence relation, that is,
$$\mathcal{R}(\mathcal{X}) = \{(x,y)\in X \times X\, :\, \text{there ex. } \iota \in \mathcal{J} \text{ s.t. } x = \iota y \}.$$
It is equipped with the induced $\sigma$-algebra (it is countably generated since the $\sigma$-algebra of $X$ is countably generated). The vertex set of the graph associated with $\mathcal{X}$ is $V_\mathcal{X} := \mathcal{R}(\mathcal{X})$. We say that two vertices $(x,y),(w,z) \in V_\mathcal{X}$ are connected if and only if $w = x$ and $y = \iota_j z$ with $\iota_j z \neq z$ for some $ 1 \leq j \leq n$. The so constructed graph is denoted by $G_\mathcal{X}$.

\begin{Proposition}
 $G_\mathcal{X}$ is a measurable graph.
\end{Proposition}

\begin{proof}
The proof of the measurability conditions can be shown in a similar way
as for the percolation graphs considered above.
\end{proof}

The equivalence relation $\mathcal{R}(\mathcal{X})$ is a groupoid with partially defined multiplication
$$(x,y) \cdot (w,z) = (x,z) \text{ iff }  w = y$$
and inverse operation
$$(x,y)^{-1} = (y,x).$$
As  measurable space the set of units  $\Omega = \mathcal{R}(\mathcal{X})^{(0)} = \{(x,x)\, : \, x \in X\}$ is equivalent to the Borel space $X$. In the following we tacitly identify both.

We now give $G_\mathcal{X}$ the structure of a graph over
$\mathcal{R}(\mathcal{X})$. The map $\pi: V_{\mathcal{X}} \to X,\,
(x,y) \mapsto x$ is clearly measurable. Moreover, the following
holds.  

\begin{Proposition}
The canonical random variable $\eta$ is measurable.
\end{Proposition}
\begin{proof} Let $\mathcal{J}$ be the group generated by the $\iota_j$, $j =1,\ldots,
n$. Note that by  continuity of the action for arbitrary
$\gamma,\varrho \in \mathcal{J}$ the set $\{x\in X: \gamma (x) \neq
\varrho (x)\}$ is open and its complement is closed. So, both sets
are measurable. Let now  $\gamma_1,\gamma_2, \ldots, $ be an
enumeration of  $\mathcal{J}$. By induction we can then construct for any
natural number $n$  a decomposition of $X$ into disjoint measurable
sets $X_{n,1},\ldots, X_{n,k_n}$, such that for  arbitrary fixed
$\gamma,\varrho \in \{\gamma_1,\ldots, \gamma_n\}$ and
$j\in\{1,\ldots, k_n\}$ we either have $\gamma x = \varrho x$ for
all $x\in X_{n,j}$ or $\gamma x \neq \varrho x$ for all $x\in
X_{n,j}$. Let then for any natural number $n$ and any $j\in
\{1,\ldots, k_n\}$ be  $\mathcal{J}_{n,j}$  a maximal subset of
$\{\gamma_1,\ldots, \gamma_n\}$ with $\gamma (x) \neq \varrho (x)$
for all $x\in X_{n,j}$ and $\gamma,\varrho\in \mathcal{J}_{n,j}$ with
$\gamma \neq \varrho$. Denote the characteristic function of
$X_{n,j}$ by $1_{X_{n,j}}$.
 Then, for any measurable nonnegative $F$ the function
$$X\longrightarrow [0,\infty),\; x\mapsto \eta_n^x (F):=\sum_{j=1}^{k_n} 1_{X_{n,j}}(x) \left( \sum_{\gamma\in \mathcal{J}_{n,j}} F(x,\gamma x)\right),$$
 is measurable (as $F$ is measurable and the $X_{n,j}$ are
measurable). Moreover, by maximality of the $\mathcal{J}_{n,j}$ we have
$\eta_n (F) \to \eta (F)$ pointwise. This is the desired
measurability of $\eta$.
\end{proof}

As a set, $\mathcal{R}(\mathcal{X})$ coincides with the vertex set
$V_\mathcal{X}$. Hence, the groupoid acts naturally upon
$V_\mathcal{X}$ by left multiplication. The fiber $\pi^{-1}(\{x\}) =
\{x\} \times \mathcal{J}x$ is the $\mathcal{J}$-orbit of $x$ with
distinguished root $x$. The action of the equivalence relation only
changes this root and leaves the orbit invariant. Therefore, it acts
by graph isomorphisms between the $\pi$-fibers.   In this way,
$G_\mathcal{X}$ is a graph over $\mathcal{R}(\mathcal{X}).$

 A transversal function of $\mathcal{R}(\mathcal{X})$ is
$$\nu:X \to \mathcal{M}(\mathcal{R}(\mathcal{X})),\, \nu^x := \sum_{(x,y) \approx (x,x) } \delta_{(x,y)}.$$
The measure $\mu$ on $X$ is invariant with respect to $\nu$. We let
$M_\mathcal{X} = \mu \circ \eta$. The function $\av_0 :=
1_{\{(x,x)\, : \, x \in X\}}$ is an averaging function. By shifting
with elements of $\mathcal{J}$ in the second variable and taking
suitable linear combinations it can be made strictly positive. In
this way, $(G_{\mathcal{X}},M_\mathcal{X})$ is a measure graph over
$\mathcal{G}$.

Recall the definition of the random variable $\xi_F$ for a $\mathcal{R}(\mathcal{X})$-invariant function $F:V \to [0,\infty]$ (see Lemma~\ref{Generating}). By Lemma~\ref{ind} its integral satisfies
$$\int \xi_F  = \int_V F \av_0 {\rm d}M_\mathcal{X} = \int_X F((x,x)) {\rm d}\mu.$$
%


\begin{Remark}
The considerations of this section also imply that the fractal
graphs studied in \cite{GIL09}  fall into our framework. Indeed from
the considerations of \cite{GIL09} one can easily see that they give
rise to a limit graphing. Hence, they can be treated by our methods.
We refrain from giving details here.
\end{Remark}


\section{The Ihara Zeta function of a measure graph}\label{Zeta}
In this section we discuss how a general measure graph over a
groupoid  gives immediately rise to a Zeta function. We will call
this the Ihara Zeta function. For finite and for periodic graphs it
will be shown to agree with the 'usual' Zeta function.

\subsection{The definition and basic properties}

Before introducing the Zeta function we need some further graph
theoretic notions. Let a graph  $G= (V,E)$ be given. A \emph{circle}
or \emph{closed path} is a path $(e_1,\ldots,e_n)$ such that $t(e_n)
= o(e_1)$.

\begin{Definition}[Reduced paths] Let $G = (V,E)$ be a graph.
\hspace{10cm}
\begin{itemize}
\item[(a)] A closed path $(e_1,\ldots, e_n)$ has a {\em backtrack} if
$e_{i+1}
= \bar{e}_{i}$ for some $i \in \{1,\ldots,n-1\}$. A path with no backtracking
is said to be {\em proper}.
\item[(b)] We call a circle {\em primitive} if it is not obtained by going $n
\geq 2$ times around a shorter closed path.
\item[(c)] A closed path $(e_1, \dots, e_n)$ has a {\em tail} if
there is a number $k \in \NN$ such that $\bar{e}_j = e_{n-j+1}$ for $1 \leq j
\leq k$.
\item[(d)] A closed path without backtracking or tail is called \emph{reduced}.
\end{itemize}
\end{Definition}

\begin{Definition}[Counting of closed path]
Whenever we are given a graph $G = (V,E)$, we let $N_j^G (x)$ be the
number of reduced closed paths of length $j$ starting at a given
vertex $x \in V$. Similarly, $P_j^G (x)$ will denote the number of
primitive reduced circles of length $j$ starting at $x$. If the
graph is understood from the context, we will suppress the
superscript $G$ and just write $N_j (x)$ and $P_j (x)$.
\end{Definition}

\begin{Lemma}[Averaged circle counting]
Let $\calG$ be a groupoid and let  $(G,M)$ be a measure graph over
$\calG$.  Then, for any natural number $j$ the functions $
\xi_{N_j}$ and $\xi_{P_j}$ given by
$$\Omega \ni \omega\mapsto \xi_{N_j}^\omega:=\sum_{x\in \pi^{-1} (\omega)}   N_j(x)  \delta_x \in \mathcal{M}(V),$$
and
$$\Omega\ni \omega\mapsto \xi_{P_j}^\omega:=\sum_{x\in \pi^{-1} (\omega)}   P_j(x)  \delta_x \in \mathcal{M}(V)$$
are random variables. In particular, the numbers
$$\overline{N}_j := \int \xi_{N_j} = \int_\Omega \xi_{N_j}^\omega (\av) d m (\omega) = \int_{V} N_j
(x) \av(x)  d M (x),$$
and
$$\overline{P}_j := \int \xi_{P_j} = \int_\Omega \xi_{P_j}^\omega (\av) d m (\omega)  = \int_{V} P_j
(x) \av(x)  d M (x)$$
exist and do not depend on the choice of $\av$, provided $\nu \ast \av=
1$.
\end{Lemma}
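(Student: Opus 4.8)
The plan is to recognize that both $\xi_{N_j}$ and $\xi_{P_j}$ are instances of the construction $\xi_H$ from Proposition \ref{Generating}, so that the asserted properties follow once we verify the two hypotheses of that proposition: measurability of the relevant counting functions and their invariance under the groupoid action. Concretely, I would apply Proposition \ref{Generating} with $H = N_j$ and then with $H = P_j$, viewing $N_j, P_j : V \to [0,\infty)$ as functions on the vertex set. The statement then follows: the $\xi_{N_j}, \xi_{P_j}$ are random variables, the integrals $\int \xi_{N_j}$ and $\int \xi_{P_j}$ are well-defined via Definition \ref{Integral-random-variable}, and their independence of the choice of averaging function $u$ with $\nu\ast u \equiv 1$ is exactly the content of Lemma \ref{ind}(b).

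The first key step is \emph{measurability} of the functions $N_j$ and $P_j$. I would argue that the number of reduced closed paths (respectively primitive reduced circles) of length $j$ starting at $x$ depends only on the isomorphism class of the ball $B^G_{r}(x)$ for $r = j$ (a reduced closed path of length $j$ stays within combinatorial distance $\lfloor j/2\rfloor \le j$ of $x$), i.e. $N_j$ and $P_j$ factor through the map $\pi_j : V \to \cA^D$, $x \mapsto [B^G_j(x)]$. Since $\pi_j$ is measurable by the definition of a measurable graph and $\cA^D$ carries the discrete $\sigma$-algebra (so \emph{every} function out of $\cA^D$ is measurable), the compositions $N_j = \widehat{N}_j \circ \pi_j$ and $P_j = \widehat{P}_j \circ \pi_j$ are measurable as required.

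The second key step is \emph{invariance}, namely $N_j(gx) = N_j(x)$ and $P_j(gx) = P_j(x)$ for all $g \in \calG$ with $\pi(x) = s(g)$. This is immediate from the fact that, by the definition of a graph over $\calG$, the map $J(g) : V^{s(g)} \to V^{r(g)}$ is a graph isomorphism carrying $x$ to $gx$; a graph isomorphism preserves lengths of paths, backtracks, tails, and the property of being a primitive circle, hence induces a bijection between the reduced closed paths (respectively primitive reduced circles) of length $j$ based at $x$ and those based at $gx$. Therefore the counting functions take equal values at $x$ and $gx$.

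I expect the main obstacle to be the measurability step, specifically making precise the claim that $N_j(x)$ and $P_j(x)$ are determined by the isomorphism class $[B^G_j(x)]$; the invariance step is essentially a formal consequence of $J(g)$ being an isomorphism and should be routine. Once both hypotheses are checked, Proposition \ref{Generating} delivers that $\xi_{N_j}$ and $\xi_{P_j}$ are random variables, and the existence and $u$-independence of the averaged quantities $\overline{N}_j$ and $\overline{P}_j$ follow directly from Definition \ref{Integral-random-variable} together with Lemma \ref{ind}(b); the three displayed expressions for each integral are then just the unwinding of the definition $\int \xi = \int_\Omega \xi^\omega(u)\, dm(\omega)$ using $M = m \circ \eta$.
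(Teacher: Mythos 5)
Your proposal is correct and takes essentially the same route as the paper's own proof: both observe that $N_j$ and $P_j$ depend only on the isomorphism class of a finite ball around the base point (hence are measurable by the definition of a measurable graph) and are invariant under the groupoid action (since each $J(g)$ is a graph isomorphism), and then invoke Proposition \ref{Generating} together with Definition \ref{Integral-random-variable} and Lemma \ref{ind}(b). Your writeup merely spells out details the paper leaves implicit, namely the factorization through $\pi_j$ and the discrete $\sigma$-algebra on $\cA^D$.
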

\begin{proof} The functions $N_j$ and $P_j$  only depend on the
isomorphism  class of a finite ball around the corresponding points.
Hence, they are clearly measurable by the definition of a measurable graph. Moreover, they are invariant
under the action of the groupoid as this action results in graph
isomorphisms and hence preserves the isomorphism classes of finite
balls around the corresponding points. Thus, by Proposition
\ref{Generating} the functions $\xi_{N_j}$ and $\xi_{P_j}$ are
random variables.
 The remaining statements follow from the considerations
 concerning the integration of random variables in the
previous section and specifically from Definition
\ref{Integral-random-variable}.
\end{proof}

\begin{Definition}[The Zeta function of a measure graph]
 Let $(G,M)$ be a measure graph over a groupoid.  The complex function
$$Z(u) := Z_{(G,M)} (u) := \exp\left(\sum_{j\geq 1}
\frac{\overline{N}_j}{j} u^j\right)$$
is called \textit{Ihara Zeta function} of $(G,M)$. The numbers
$\overline{N}_j$, $j\in\NN$, appearing in its definition are called
the \textit{coefficients} of the Zeta function.
\end{Definition}

\begin{Proposition}\label{Eulerproduct}
Let $(G,M)$ be a measure graph over a groupoid. $Z = Z_{(G,M)}$ is a
holomorphic function on the disc $B_{(D-1)^{-1}}:= \{u \in \CC\,:\, |u| <
(D-1)^{-1}\},$ where $D$ is the maximal vertex degree of $G$.
Furthermore, $Z$ has an Euler product representation, i.e., for
each $u\in B_{(D-1)^{-1}}$ the equality
$$Z(u) = \prod_{j\geq 1} (1-u^j) ^{-\frac{\overline{P}_j}{j}}$$
holds.

\end{Proposition}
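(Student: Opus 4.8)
The plan is to treat the two assertions separately. Holomorphy will follow from a crude degree bound on the number of reduced closed paths, while the Euler product will be reduced to a single pointwise combinatorial identity relating reduced closed paths to primitive reduced circles, which upon integration and a power-series manipulation yields the stated product.

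\textbf{Holomorphy.} First I would bound the number of reduced closed paths. Since $G$ has vertex degree bounded by $D$, a proper (non-backtracking) closed path of length $j$ based at $x$ is obtained by choosing its first edge (at most $D$ possibilities) and then, at each of the remaining $j-1$ steps, an edge different from the reversal of the previous one (at most $D-1$ possibilities). As reduced closed paths are in particular proper, this gives the uniform bound $N_j(x) \leq D(D-1)^{j-1}$ for all $x \in V$ and $j \geq 1$. Integrating against the averaging function and using that $\int \eta = \int_V u(x)\, dM(x) =: C$ is finite by the definition of a measure graph, I obtain $\overline{N}_j \leq C\, D(D-1)^{j-1}$. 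Hence
$$\limsup_{j\to\infty} \left(\frac{\overline{N}_j}{j}\right)^{1/j} \leq D-1,$$
so the power series $\sum_{j\geq 1} \frac{\overline{N}_j}{j} u^j$ has radius of convergence at least $(D-1)^{-1}$, and since $\exp$ is entire, $Z = \exp\bigl(\sum_j \frac{\overline{N}_j}{j}u^j\bigr)$ is holomorphic on $\{|u| < (D-1)^{-1}\}$.

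\textbf{The key identity.} The heart of the matter is the pointwise relation
$$N_n(x) = \sum_{d\mid n} P_d(x), \qquad x \in V,\ n \geq 1.$$
To prove it I would construct a bijection between reduced closed paths of length $n$ based at $x$ and pairs $(C,m)$ with $m\geq 1$, $d:=n/m\in\NN$, and $C$ a primitive reduced circle of length $d$ based at $x$, the path associated to $(C,m)$ being $C$ traversed $m$ times. Two points need checking. First, the $m$-fold concatenation of a primitive reduced circle $C=(e_1,\dots,e_d)$ is again reduced: no backtracking arises within a single copy, and at a seam the edge $e_d$ is followed by $e_1$ with $e_1 \neq \bar e_d$, which holds precisely because $C$ has no tail (the case $k=1$ of the tail condition). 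Second, every reduced closed path has a unique primitive root, obtained by iterating the fact that a non-primitive reduced circle is a proper power of a strictly shorter reduced circle. This second point is the step I expect to require the most care, since it is exactly where the notions of \emph{proper}, \emph{primitive} and \emph{tail} must interact cleanly to guarantee both existence and uniqueness of the decomposition.

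\textbf{Deducing the product.} Integrating the identity against $u\,dM$ and using the definitions of $\overline{N}_n$ and $\overline{P}_d$ gives $\overline{N}_n = \sum_{d\mid n}\overline{P}_d$. Since $0 \leq \overline{P}_d \leq \overline{N}_d \leq C\,D(D-1)^{d-1}$, the double series $\sum_{d,m\geq 1}\frac{\overline{P}_d}{dm}|u|^{dm}$ has nonnegative terms and, grouping by $n=dm$, equals $\sum_{n\geq 1}\frac{\overline{N}_n}{n}|u|^n<\infty$ for $|u|<(D-1)^{-1}$; hence it converges absolutely and may be rearranged. Therefore, on this disc,
$$\log Z(u) = \sum_{n\geq 1}\frac{\overline{N}_n}{n}u^n = \sum_{d\geq 1}\frac{\overline{P}_d}{d}\sum_{m\geq 1}\frac{u^{dm}}{m} = -\sum_{d\geq 1}\frac{\overline{P}_d}{d}\log(1-u^d),$$
with $\log(1-u^d)$ the principal branch (legitimate since $|u^d|<1$). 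Exponentiating yields $Z(u)=\prod_{d\geq 1}(1-u^d)^{-\overline{P}_d/d}$, the convergence of the product following from the absolute convergence of $\sum_d \frac{\overline{P}_d}{d}\log(1-u^d)$ just established. The only genuinely nontrivial input is the combinatorial identity of the previous paragraph; the analytic steps here are routine once the bounds are in place.
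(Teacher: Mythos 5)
Your proof is correct and takes essentially the same route as the paper: the bound $N_j(x)\leq D(D-1)^{j-1}$ combined with finiteness of $u\,dM$ for holomorphy, the pointwise identity $N_j(x)=\sum_{i\mid j}P_i(x)$ integrated and then rearranged via the logarithmic series (with absolute convergence for $|u|<(D-1)^{-1}$ justifying the interchange), and exponentiation. The only difference is that you additionally sketch a proof of the divisor identity itself (the seam and tail analysis and the uniqueness of the primitive root), which the paper simply asserts as an observation.
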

\begin{proof}
 The first assertion is a direct consequence of the observation $N_j(x)\leq
D(D-1)^{j-1}$ and the finiteness of the measure  $\av {\rm d}M$.  For the second statement let us note that for each $j$ and each $x \in V$ the equality
$$N_j(x) = \sum_{i\,|\, j} P_i(x) $$
holds, where $i\, | \, j$ means that $i$ divides $j$. Thus, using an integrated version of this identity we obtain
\begin{align*}
 \sum_{j\geq 1} \frac{\overline{N}_j}{j}u^j &=  \sum_{j\geq 1} \sum_{i\,|\, j} \overline{P}_i \frac{u^j}{j} \\
 \text{(interchanging summation)}&= \sum_{i \geq 1} \sum_{ j \geq 1} \overline{P}_i \frac{u^{ij}}{ij}\\
\text{(logarithmic series)} &= \sum_{i \geq 1} -\log(1-u^i)\frac{\overline{P}_i}{i}.
\end{align*}
Note that for $|u|<(D-1)^{-1}$ all the above series are absolutely convergent. Taking exponentials yields the claim.
\end{proof}
\begin{Remark}
We  emphasize that the validity of the Euler product representation
of $Z$ only owes to the validity of
 $$N_j(x) = \sum_{i\,|\, j} P_i(x).$$
\end{Remark}

\subsection{Examples} Here we discuss the Zeta functions of the measure graphs which were introduced in Subsection~\ref{Examples}. It is quite instructive to compute the quantities $\overline{N}_j$ and  $\overline{P}_j$ for these examples.

For finite and periodic graphs the exponent $\overline{P}_j$ in the Euler product representation can be further resolved and the product then be taken over certain geometric quantities. For this purpose we recall the following graph theoretic notions.

\begin{Definition}[Cycles]
\hspace{10cm}
\begin{itemize}
\item[(a)] Two closed paths $(e_1, \dots, e_m)$ and $(f_1, \dots, f_m)$ are said to be
equivalent if there is some $l$ such that $e_{i} = f_{i+l}$ for all $1
 \leq i\leq m$. Here we use the convention $f_{i} = f_{i+m}$. The corresponding
equivalence classes of closed paths are called {\em cycles}. The equivalence
 class of a closed path $C$ is denoted by $[C]$.
\item[(b)]  Cycles with their representatives being reduced and primitive are
called \emph{prime cycles}. The set of all prime cycles is denoted by $\mathcal{P}$.
\end{itemize}
\end{Definition}

\subsubsection{Finite graphs} Let $G= (V,E)$ be finite. As discussed at the end of the previous section $G$ is a measure graph over the trivial groupoid. The measure $M$ can be chosen to be the counting measure on $V$. Then, the constant function $1_V$ is an averaging function. In this situation we obtain
$$\overline{N}_j =\sum_{x\in V} N_j (x) \text{ and } \overline{P}_j =\sum_{x\in V} P_j (x),$$
that is, $\overline{N}_j$ is the total number of reduced closed paths of
length $j$ and $\overline{P}_j$ is the total number of primitive reduced
closed paths of length $j$.

\begin{Proposition} \label{finiteIhara}
Let $G=(V,E)$ be a finite graph and let $M$ be the counting measure. Then the Zeta function satisfies
$$
{Z}_{(G,M)}(u)= \prod_{[P] \in \mathcal{P}} \Big( 1- u^{\ell(P)} \Big)^{-1},
$$
where $\mathcal{P}$ is the set of prime cycles and $\ell(P)$ is the
length of $[P]$.
\end{Proposition}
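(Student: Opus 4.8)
The plan is to obtain the claimed product directly from the Euler product representation already established in Proposition~\ref{Eulerproduct}, namely $Z_{(G,M)}(u) = \prod_{j \geq 1} (1 - u^j)^{-\overline{P}_j/j}$, by reindexing the product over prime cycles instead of over lengths. The entire content is therefore the combinatorial identity relating $\overline{P}_j$ to the number of prime cycles of length $j$; the analytic side is essentially free, since the convergence of all the products below for $|u| < (D-1)^{-1}$ is already guaranteed by Proposition~\ref{Eulerproduct}.

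First I would use the formula recorded just before the proposition: for the counting measure on a finite graph one has $\overline{P}_j = \sum_{x \in V} P_j(x)$, which is precisely the total number of primitive reduced closed paths of length $j$, taken over all starting vertices. Next I would consider the cyclic rotation $(e_1,\ldots,e_j) \mapsto (e_2,\ldots,e_j,e_1)$. A short inspection of the definitions shows that a closed path is reduced exactly when $e_{i+1}\neq\bar e_i$ for $1\le i\le j-1$ and $e_j\neq\bar e_1$, a condition invariant under rotation; primitivity is likewise rotation-invariant. Hence rotation defines an action of the cyclic group of order $j$ on the set of primitive reduced closed paths of length $j$, whose orbits are by definition exactly the prime cycles of length $j$. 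Counting orbit by orbit gives $\overline{P}_j = \sum_{[P]:\,\ell(P)=j} \#[P]$.

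The crux, and the only place primitivity is genuinely needed, is to show that each such orbit has exactly $j$ elements, i.e. that the action is free. I would argue contrapositively: if a rotation by $t$ with $0<t<j$ fixes $(e_1,\ldots,e_j)$, then $e_i = e_{i+t}$ for all indices taken modulo $j$, so the sequence is periodic with period $d=\gcd(t,j)$, and $d\mid j$ with $d<j$. The initial segment $(e_1,\ldots,e_d)$ is then itself a closed path, and the original path is obtained by traversing it $j/d\ge 2$ times, contradicting primitivity. Thus every prime cycle of length $j$ contains exactly $j$ representatives, and consequently $\overline{P}_j/j$ equals the number of prime cycles of length $j$.

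Finally I would substitute this count into Proposition~\ref{Eulerproduct}. Writing the factor $(1-u^j)^{-\overline{P}_j/j}$ as a product of $\overline{P}_j/j$ copies of $(1-u^j)^{-1}$, one copy for each prime cycle of length $j$, and then taking the product over all $j\ge 1$, regroups the Euler product as a product over all of $\mathcal{P}$ and yields $Z_{(G,M)}(u) = \prod_{[P]\in\mathcal{P}} (1-u^{\ell(P)})^{-1}$, as claimed.
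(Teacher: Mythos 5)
Your proposal is correct and takes essentially the same route as the paper: both deduce the claim from Proposition~\ref{Eulerproduct} via the identity $\overline{P}_j = j\cdot|\mathcal{P}_j|$ and a reordering of the Euler product. You merely spell out the detail the paper leaves implicit, namely that primitivity makes the rotation action on reduced primitive closed paths free, so each prime cycle of length $j$ has exactly $j$ representatives.
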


\begin{proof}
Since $M$ is the counting measure on $V$ the quantity
$\overline{P}_j$ coincides with the number of primitive reduced
closed paths of length $j$.  Thus, we have $\overline{P}_j = j \cdot
 |\mathcal{P}_j|,$ with $\mathcal{P}_j$ being the set of prime
cycles of length $j$. Using this fact and reordering the product of
Proposition \ref{Eulerproduct} yields the claim.
\end{proof}

\begin{Remark}
For a finite graph the Ihara Zeta function is 'usually' defined via
the product of the previous proposition (see e.g. \cite{Terras}). Thus, for finite graphs our
definition of Zeta function coincides with the 'usual' one.
\end{Remark}

Our approach suggests to also consider the Zeta function coming from
the normalized measure giving the mass $1 /|V|$ to any vertex.
We call the arising Zeta function the \textit{normalized Zeta
function} of the finite graph  and denote it by $Z_{G,\,{\rm norm}}$.
Obviously $Z_{G,\,{\rm norm}}^{|V|} = Z_{(G,M)}$ holds. \label{Normalized-Zeta}

\subsubsection{Periodic graphs} \label{Periodic-Zeta} Let us first fix some notation. For a group $\Upsilon$ acting on some space $X$ we let $\Upsilon x = \{\gamma x \, : \, \gamma \in \Upsilon \}$ be the {\em orbit} and $\Upsilon_x = \{\gamma \in \Upsilon \, : \, \gamma x = x \}$ be the {\em stabilizer} of $x \in X$. Further, we denote by $X / \Upsilon = \{\Upsilon x\, : \, x \in X\}$ the collection of orbits.

\medskip

Let $(G,\Gamma)$ be a periodic graph (see Definition \ref{periodic graph}). Recall that in this case $M = m \circ \eta$ is the
counting measure on the vertices and $(G,M)$ is a measure graph over the groupoid $\calG = \Gamma$. In this situation we obtain
$$\overline{N}_j = \sum_{x\in F} \frac{N_j (x)}{|\Gamma_x|} \text{ and } \overline{P}_j = \sum_{x\in F} \frac{P_j (x)}{|\Gamma_x|}.$$
As these quantities (and thus $Z_{(G,M)}$) only depend on $G$ and $\Gamma$ we shall write $Z_{(G,\Gamma)}$ for the corresponding zeta function.

The group $\Gamma$ naturally acts on closed paths and on cycles of $G$. Namely, for a closed path $C = (e_1,\ldots,e_m)$ and $\gamma \in \Gamma$ we let $\gamma \, C = (\gamma e_1,\ldots ,\gamma e_m)$.  This action is compatible with passing from closed paths to cycles. Thus, for a cycle $[C]$ we can set $\gamma [C] = [\gamma\, C]$.

\begin{Proposition}
Let $(G,\Gamma)$ be a periodic graph. Its Zeta function satisfies
 $$Z_{(G,\Gamma)} (u) = \prod_{\Gamma [P] \in \mathcal{P}/\Gamma} \Big( 1- u^{\ell(P)} \Big)^{-\frac{1}{|\Gamma_{[P]}|}},$$
 where $\mathcal{P}$ is the set of all prime cycles and $\ell(P)$ is the length of $[P]$.
\end{Proposition}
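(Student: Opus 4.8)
The plan is to start from the Euler product representation established in Proposition \ref{Eulerproduct}, namely $Z_{(G,\Gamma)}(u) = \prod_{j\geq 1} (1-u^j)^{-\overline{P}_j/j}$, and to rewrite the exponent $\overline{P}_j/j$ as a sum over $\Gamma$-orbits of prime cycles of length $j$. Recall that $\overline{P}_j = \sum_{x\in F} P_j(x)/|\Gamma_x|$, where $P_j(x)$ counts primitive reduced circles of length $j$ starting at $x$. First I would relate $P_j(x)$ to the set of prime cycles: a primitive reduced closed path of length $j$ based at $x$ determines a prime cycle $[P]$ of length $j$ together with a choice of starting vertex on it, so counting such paths over all $x$ (weighted by $1/|\Gamma_x|$ and restricted to the fundamental domain $F$) amounts to counting prime cycles of length $j$ with appropriate multiplicities coming from the $\Gamma$-action.

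The key combinatorial step is an orbit-counting identity. For a fixed prime cycle $[P]$ of length $j=\ell(P)$, its $\Gamma$-orbit $\Gamma[P]$ contributes to $\overline{P}_j$, and I would show that this contribution equals exactly $j/|\Gamma_P|$, where $\Gamma_P$ is the stabilizer of $[P]$ under the $\Gamma$-action on cycles. The mechanism is a Burnside/orbit-stabilizer bookkeeping: summing $P_j(x)/|\Gamma_x|$ over $x \in F$ effectively averages the number of based representatives of cycles in the orbit $\Gamma[P]$ over the group, and the stabilizer factors from the vertices $(|\Gamma_x|)$ and from the cycle $(|\Gamma_P|)$ must be reconciled. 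Concretely, each of the $j$ base-points on $P$ yields a primitive reduced path, and as one runs over the orbit $\Gamma[P]$ and quotients by the fundamental-domain weighting, the total weighted count collapses to $j/|\Gamma_P|$. Dividing by $j$ then gives the contribution $1/|\Gamma_P|$ to the exponent attached to the factor $(1-u^{\ell(P)})^{-1}$ for each orbit $\Gamma[P]$.

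Assembling these, I would group the product $\prod_{j\geq 1}(1-u^j)^{-\overline{P}_j/j}$ by first writing $\overline{P}_j/j = \sum_{\Gamma[P]\in(\mathcal{P}/\Gamma),\,\ell(P)=j} 1/|\Gamma_P|$ and then reorganizing the double product over $j$ and over orbits of that length into a single product over all orbits $\Gamma[P]\in\mathcal{P}/\Gamma$. This yields
$$Z_{(G,\Gamma)}(u) = \prod_{\Gamma[P]\in\mathcal{P}/\Gamma} \Big(1-u^{\ell(P)}\Big)^{-\frac{1}{|\Gamma_P|}},$$
as claimed; here $\ell(P)$ is well-defined on orbits since the $\Gamma$-action preserves length. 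Throughout, one should check that the rearrangement of the product is legitimate, which follows from the absolute convergence of the defining series for $|u|<(D-1)^{-1}$ already noted in Proposition \ref{Eulerproduct}.

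The step I expect to be the main obstacle is the precise orbit-stabilizer accounting that produces the factor $1/|\Gamma_P|$. One must be careful about two distinct stabilizers --- the vertex stabilizers $\Gamma_x$ appearing in the averaging function $u$ and the cycle stabilizer $\Gamma_P$ appearing in the final formula --- and verify that the weighted sum over the fundamental domain correctly transforms one into the other. In particular the interplay between the $j$-fold choice of base-point on a cycle, the possible fixed points of elements of $\Gamma_P$ acting on these base-points, and the fundamental-domain restriction is where a naive count could go wrong, so this bookkeeping must be carried out with care rather than by analogy to the finite case treated in Proposition \ref{finiteIhara}.
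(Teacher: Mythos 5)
Your proposal is correct and is essentially the paper's own proof: starting from the Euler product of Proposition~\ref{Eulerproduct}, you reduce to the identity $\overline{P}_j/j=\sum_{\{\Gamma[P]\in\mathcal{P}/\Gamma\,:\,\ell(P)=j\}}1/|\Gamma_{P}|$ and establish it by orbit--stabilizer bookkeeping over a fundamental domain, exactly the paper's route. The counting you flag as the main obstacle is resolved in the paper precisely along the lines you indicate: after multiplying by $j$ and passing from cycles to primitive reduced closed paths (which absorbs the $j$-fold choice of base-point and trades the cycle stabilizer for the path stabilizer), one notes that each path-orbit $\Gamma P$ starts at a unique $x\in F$ and that $\Gamma_x/\Gamma_P$ acts freely and transitively on the paths of $\Gamma P$ starting at $x$, so there are exactly $|\Gamma_x|/|\Gamma_P|$ such based representatives and the weight $1/|\Gamma_x|$ in $\overline{P}_j=\sum_{x\in F}P_j(x)/|\Gamma_x|$ converts into the claimed $1/|\Gamma_P|$ per orbit.
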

\begin{proof}
We first note that the cardinality of the stabilizer $|\Gamma_{[P]}|$ and the length $\ell(P)$ are independent of the chosen representative of $\Gamma [P]$. Now the statement follows from Proposition~\ref{Eulerproduct} by reordering the product once we
show the equality
$$\frac{\overline{P}_j}{j} = \sum_{\{ \Gamma[P] \in \mathcal{P}/\Gamma\, : \, \ell(P) = j\}} \frac{1}{|\Gamma_{[P]}|}.$$
For any prime cycle $[P] \in \mathcal{P}$ the group $\Gamma_{[P]}$ acts on the set $[P]$. Hence, for $Q \in [P]$ The orbit-stabilizer formula yields
$$|\Gamma_{[P]}| = |\Gamma_{[P]} Q| |\Gamma_{Q}|,$$
where we use that the stabilizer of $Q$ of the $\Gamma_{[P]}$ action is $\Gamma_{Q}$. If we denote the set of all primitive reduced closed paths by  $\ow{\mathcal{P}}$, this leads to
\begin{align*}
 \sum_{\{ \Gamma  P \in \ow{\mathcal{P}}/\Gamma\, : \, \ell(P) = j\}} \frac{1}{|\Gamma_{P}|} &=   \sum_{\{ \Gamma  P \in \ow{\mathcal{P}}/\Gamma\, : \, \ell(P) = j\}} \frac {|\Gamma_{[P]} P|}{|\Gamma_{[P]}|}\\
 &= \sum_{\{ \Gamma[P] \in \mathcal{P}/\Gamma\, : \, \ell(P) = j\}} \sum_{\Gamma Q  \in \Gamma[P]} \frac {|\Gamma_{[Q]} Q|}{|\Gamma_{[Q]}|}\\
 &=\sum_{\{ \Gamma[P] \in \mathcal{P}/\Gamma\, : \, \ell(P) = j\}}  \frac{1}{|\Gamma_{[P]}|} \sum_{\Gamma Q  \in \Gamma[P]} {|\Gamma_{[Q]} Q|} .
\end{align*}
Let now $\{P_1,\ldots,P_l\}$ be a collection of Elements of $[P]$ of minimal cardinality such that $$\{\Gamma P_1,\ldots, \Gamma P_l\} = \Gamma [P].$$ It is readily verified that the sets $\Gamma_{[P]} P_1, \ldots, \Gamma_{[P]} P_l$ are a partition of $[P]$. Since $|[P]| = \ell(P) = j$ and $[P_k] = [P]$ for $1 \leq k \leq l$, we obtain
$$\sum_{\Gamma Q  \in \Gamma[P]} {|\Gamma_{[Q]} Q|}= \sum_{k=1}^l |\Gamma_{[P_k]} P_k| = \sum_{k=1}^l |\Gamma_{[P]} P_k|  = j.$$
Hence, in order to prove the desired identity for $\overline{P}_j/j$, it suffices to verify
$$\overline{P}_j = \sum_{\{ \Gamma  P \in \ow{\mathcal{P}}/\Gamma\, : \, \ell(P) = j\}} \frac{1}{|\Gamma_{P}|}.$$

To this end, fix a fundamental domain $F \subset V$ of finite co-volume.  For each orbit $\Gamma  P \in \ow{\mathcal{P}}/\Gamma$ there is a unique $x \in F$ such that there exist paths in  $\Gamma P$ starting in $x$. Thus, we say $\Gamma P$ starts in $x \in F$ if one of the paths in $\Gamma P$ starts in $x$. We obtain
$$\sum_{\{ \Gamma P \in \ow{\mathcal{P}}/\Gamma\, : \, \ell(P) = j\}} \frac{1}{|\Gamma_{P}|} = \sum_{x \in F} \sum_{\ato{\{ \Gamma P \in \ow{\mathcal{P}}/\Gamma\, : \, \ell(P) = j}{\text{ and } \Gamma P \text{ starts in } x\}}} \frac{1}{|\Gamma_{P}|}.$$
In order to sum over paths starting in $x \in F$ (as required to compute $\overline{P}_j$ ) instead of orbits of paths we need to count how many different paths in $\Gamma P$ actually do start in $x$.
Now if $\Gamma P$ starts at $x$, then $\gamma \,\Gamma P$ starts in $x$ as well if and only if $\gamma \in \Gamma_x$. Hence, the number of distinct paths starting at $x$ is given by
$|\{Q \in \Gamma P\, : \, Q \text{ starts in } x\}| = |\Gamma_x / \Gamma_P| = |\Gamma_x|/|\Gamma_P|.$  These considerations yield
\begin{align*}
 \sum_{x \in F} \sum_{\ato{\{ \Gamma P \in \ow{\mathcal{P}}/\Gamma\, : \, \ell(P) = j}{\text{ and } \Gamma P \text{ starts in } x\}}} \frac{1}{|\Gamma_{P}|} = \sum_{x \in F} \sum_{\ato{\{ \Gamma P \in \ow{\mathcal{P}}/\Gamma\, : \, \ell(P) = j}{\text{ and } \Gamma P \text{ starts in } x\}}} \frac{1}{|\Gamma_{P}|} \cdot \frac{|\Gamma_x|}{|\Gamma_{x}|} = \sum_{x \in F} \frac{P_j(x)}{|\Gamma_x|} = \overline{P}_j.
\end{align*}
This finishes the proof.
\end{proof}

\begin{Remark}
\begin{itemize}
 \item In  \cite{CMS01} periodic graphs were the first class of examples of infinite graphs for which Zeta functions were defined with the above product representation serving as a definition. Thus, our concept of Zeta function of a periodic graph coincides with the one previously studied.

 \item In \cite{CJK13} Zeta functions of vertex transitive infinite $q$-regular graphs were introduced. If the action of the automorphism group is free these coincide with our Zeta function for periodic graphs. If the action is not free they differ by some exponent.

 \item The previous proposition can be seen as an analogue of Theorem 2.2 (iii) of \cite{GIL08} which states that
 $$\prod_{\Gamma [P] \in \mathcal{P}/\Gamma} \Big( 1- u^{\ell(P)} \Big)^{-\frac{1}{|\Gamma_{[P]}|}} = \exp\left(\sum_{j \geq 1} \frac{N^\Gamma_j}{j}u^j  \right),$$
 for a sequence of numbers $N_j^\Gamma$ that depend on geometric quantities of the graph. The crucial new insight is that the coefficients $N^\Gamma_j$ equal $\overline{N}_j$ and can thus be expressed as integrals over functions on the vertices of the graph. Indeed, this is the main observation that lies at the heart of our paper.
\end{itemize}
\end{Remark}

\subsubsection{Percolation graphs}

Let $(G_{\Gamma, {\rm perc}},M_{\mathbb{P}})$ be a percolation graph as  in  Section~\ref{expample:percolation graphs}. We obtain
$$\overline{N}_j =  \int_\Theta N_j^{G_{\Gamma, {\rm perc}}}((e,\theta))\, {\rm d} \mathbb{P} = \int_\Theta N_j^{G_\theta}(e)\,  {\rm d} \mathbb{P}$$
and
$$\overline{P}_j =  \int_\Theta P_j^{G_{\Gamma, {\rm perc}}}((e,\theta))\,  {\rm d} \mathbb{P} = \int_\Theta P_j^{G_\theta}(e)\,  {\rm d} \mathbb{P}.$$
This shows that the Zeta function of the percolation graph counts the averaged number of reduced closed paths which start at the neutral element in the realization $G_\theta$.

\begin{Remark}
 To the best of our knowledge Zeta functions of percolation graphs have not been discussed in the literature before.
 This underscores the generality of the approach proposed in the present paper.
\end{Remark}

\subsubsection{Graphings}
Let $\mathcal{X}$ be a graphing as in
Subsection~\ref{example:graphings} and let
$(G_\mathcal{X},M_\mathcal{X})$ the corresponding measure graph. We
obtain
$$\overline{N}_j =  \int_X N_j^{G_\mathcal{X}}((x,x))\, {\rm d} \mu \text{ and } \overline{P}_j =  \int_X P_j^{G_\mathcal{X}}((x,x))\, {\rm d} \mu .$$
The graph $G_x = (\mathcal{J}x,E_x)$ with $E_x = \{(y,z) \in
\mathcal{J}x \times \mathcal{J}x\, : \,  y = \iota_j z \text{ for
some } 1\leq j \leq n \}$  is isomorphic to the induced graph on the
fiber $\pi^{-1}(\{x\}) \subseteq V_\mathcal{X}$. Therefore, the
above formulas simplify to
$$\overline{N}_j =  \int_X N_j^{G_x}(x)\, {\rm d} \mu \text{ and } \overline{P}_j =  \int_X P_j^{G_x}(x)\, {\rm d} \mu.$$
For the corresponding Zeta function we shall simply write
$Z_\mathcal{X}$ instead of $Z_{(G_\mathcal{X},M_\mathcal{X})}$.

\begin{Remark}
 Zeta functions of graphings have occurred implicitly in the literature as the limit of Zeta functions of finite graphs.  We will discuss this  in detail in Subsection~\ref{subsection:weakly convergent graphs sequences}.
\end{Remark}

\section{A determinant formula for the Zeta function}
\label{Calculating-vertices} In this section we provide a
determinant formula for the Ihara Zeta function in terms of the
adjacency and the degree operator. The determinant formula states
that in a small neighborhood around $0$ the reciprocal of the
function $Z_{(G,M)}$ can be expressed as a determinant  (up to some
multiplicative factor). In particular, $Z_{(G,M)}$ must then be a
non-vanishing holomorphic function in a small neighborhood of $0$.
 For the proof we will follow the lines of
\cite{GIL09} and give details only when they need to be adapted to
our situation. For the case of finite graphs the presented approach
towards proving the determinant formula was first established in
\cite{StarkTerras-96}. However, before providing the determinant
formula we need to set the stage and introduce the operators and the
determinant. To this end, we discuss the von Neumann algebra
$\mathcal{N}(\mathcal{G},G)$  of a measure graph and introduce a
trace on it.

\subsection{The von Neumann algebra}
Let a measure graph $(G,M)$ over a groupoid $\calG$ be given. Then,
the graph can be seen as a bundle over $\Omega=\calG^{(0)}$ via $\pi
: V\to \Omega$. This induces a bundle structure on $L^2(G, M)$ and
this will be our point of view in the subsequent considerations.
More specifically, by Fubini's Theorem we can associate with $f\in
L^2(V, m\circ \eta)$  a family $(f_\omega)_{\omega \in \Omega}$ of
functions
\begin{equation}\label{Faserf}
f_\omega \in \ell^2(V^\omega, \eta^\omega)  = \ell^2(V^\omega) \;\: \mbox{such that}\;\:
f(x) = f_{\pi(x)}(x)
\end{equation}
for $m$-almost all $\omega \in \Omega$.

\smallskip

\begin{Remark}
On the technical level, our approach  can be   expressed via  direct
integral theory and the fact that there is a canonical isomorphism
\begin{equation}\label{iso}
L^2(V, M)\simeq \int_\Omega^{\oplus} \ell^2(V^\omega, \eta^\omega)
\, dm (\omega).
\end{equation}
Here, the field $\omega \mapsto \ell^2 (V^\omega,\eta^\omega)$ of
Hilbert spaces  is equipped with the canonical measurable structure
 induced by all measurable $f : \mathcal{G}\longrightarrow
[0,\infty]$ with $\eta^\omega (f^2) <\infty$ for all $\omega \in
\varOmega$. In our presentation we will not need to explicitly
invoke  direct integral theory and for this reason we do not
elaborate further on this. Instead we just recall (or rather adapt)
the corresponding considerations of Section 3 of  \cite{LPV} to our
context. The work \cite{LPV} in turn is a specialization of
corresponding considerations in \cite{Connes-1979}.  We refer to
these works for further discussion of the background and the
technicalities. We also refer the reader to e.g.
~\cite{Dixmier-1981} for background in direct integral theory.
\end{Remark}

\medskip

A special role will be played by those operators which respect the
bundle structure of $ L^2(V,M)$ given by \eqref{Faserf}. Namely, we
say that the (not necessarily bounded) operator $A:
L^2(V,M)\to L^2(V, M)$ is {\it decomposable} if there
exist operators $A_\omega : \ell^2(V^\omega,
\eta^\omega)\to \ell^2(V^\omega, \eta^\omega)$ such that
$(A f)_\omega (x) = (A_\omega f_\omega) (x)$, for almost every $\omega \in
\Omega$. We then write $A=\int_\Omega^\oplus A_\omega \,d m
(\omega)$.

\medskip

For $g\in \calG$, let the unitary operator $U_g$ be given by
\begin{equation*}
U_g \colon \ell^2(V^{s(g)},\eta^{s(g)}) \to
\ell^2(V^{r(g)},\eta^{r(g)}),\; U_g f (x) := f( g^{-1}x).
\end{equation*}

A family $(A_\omega)_{\omega\in \Omega}$ of bounded operators
$A_\omega: \ell^2(V^\omega,\eta^\omega)\to \ell^2(V^\omega,\eta^\omega)$
is called a {\em bounded random operator} if it satisfies :
\begin{itemize}
\item $\omega\mapsto \langle g_\omega, A_\omega f_\omega\rangle$ is
 measurable for arbitrary $f,g\in L^2(V, M)$.
\item There exists a $C\geq 0$ with $\|A_\omega\|\leq C$ for almost
all $\omega \in \Omega$.
\item The equivariance condition $A_{r(g)} = U_g A_{s(g)} U_g^*$  is satisfied for all
$g\in \calG$.
\end{itemize}
Two bounded random operators $(A_\omega), (B_\omega)$ are called
equivalent, $(A_\omega)\sim (B_\omega)$ if $A_\omega=B_\omega$ for
$m$-almost every $\omega\in \Omega$.  Each equivalence class of
bounded random operators $(A_\omega)$ gives rise to a bounded
operator $A$ on $L^2(V,M)$ by $A f (x) := A_{\pi(x)} f_{\pi(x)}$.
This allows us to identify the class of $(A_\omega)$ with the
bounded operator $A$. The set of classes of
 bounded random operators is denoted by $\calN(\calG,G)$. It is
 obviously an algebra (under the usual pointwise addition and multiplication
on the level of representatives). Moreover, it carries a norm given
by
$$\|[(A_\omega)_{\omega\in \Omega}]\| :=\inf\{ C\geq 0 : \|A_\omega\| \leq C
\;\: \mbox{for $m$-almost every $\omega \in \Omega$} \}.$$
(It is not hard to see that this is indeed well-defined and a norm.)
The following is the main theorem on the structure of the space of
random operators (see \cite[Thm.~V.2]{Connes-1979})

\begin{Theorem} \label{Neum} The set $\calN(\calG,G)$ of classes of
 bounded random operators is a von Neumann algebra.
\end{Theorem}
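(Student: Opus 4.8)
The goal is to realize $\calN(\calG,G)$ as a weakly closed, unital $*$-subalgebra of the bounded operators $B(L^2(V,M))$; by von Neumann's bicommutant theorem this is precisely what it means to be a von Neumann algebra. The plan is to separate the purely algebraic verification, which is routine, from the topological (weak closure) part, which is the crux. Throughout I work under the identification $A f(x) = A_{\pi(x)} f_{\pi(x)}$ of a class of random operators with a bounded operator on $L^2(V,M)$, so that the norm $\|[(A_\omega)]\|$ from the statement is exactly the operator norm.

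First I would check closure under the algebraic operations on the level of fibers, setting $(A+B)_\omega = A_\omega + B_\omega$, $(AB)_\omega = A_\omega B_\omega$ and $(A^*)_\omega = (A_\omega)^*$. Each of the three defining properties is preserved: uniform boundedness is immediate, with $\|AB\|\le\|A\|\,\|B\|$; and the equivariance $A_{r(g)} = U_g A_{s(g)} U_g^*$ survives sums and products, and survives the adjoint precisely because each $U_g$ is unitary. For the measurability of $\omega\mapsto\langle g_\omega,(AB)_\omega f_\omega\rangle$ I would write it as $\langle A_\omega^* g_\omega, B_\omega f_\omega\rangle$ and insert a measurable field of orthonormal systems built from the countable basis of the $\sigma$-algebra on $V$ guaranteed by the definition of a measure graph, reducing to a countable sum of products of the measurable quantities supplied by hypothesis; polarization handles the adjoint. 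Finally the identity operator evidently lies in $\calN(\calG,G)$.

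The substantial point is weak closure. Since the $\sigma$-algebras carry countable bases, $L^2(V,M)$ is separable, so the weak operator topology is metrizable on each norm ball, and it suffices to show that every norm ball of $\calN(\calG,G)$ is weakly closed; hence I may argue with a sequence $A^{(n)}\in\calN(\calG,G)$, bounded in norm, with $A^{(n)}\to A$ weakly. I would first deduce that $A$ is decomposable: the decomposable operators form the commutant of the diagonal algebra $\{M_\phi : \phi\in L^\infty(\Omega)\}$, where $(M_\phi f)(x)=\phi(\pi(x))f(x)$, and commutants are weakly closed, so $A=\int_\Omega^\oplus A_\omega\,dm(\omega)$. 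It then remains to choose representatives of the fibers $A_\omega$ so that $A_{r(g)}=U_g A_{s(g)}U_g^*$ holds for the relevant $g$.

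This last step is the main obstacle, for two reasons: fiberwise weak convergence is not automatic from weak convergence on the total space, and the relation to be preserved couples two distinct fibers through $U_g$ and must be arranged to hold $\nu$-almost surely in $g$ and $m$-almost surely in $\omega$ simultaneously. The clean way to organize it is to exhibit $\calN(\calG,G)$ as the commutant of a single self-adjoint family of genuine operators on $L^2(V,M)$, consisting of the diagonal algebra together with a representation that assembles the partial isometries $U_g$ into honest operators on the total space (via bisections of the groupoid, in the spirit of the inverse-semigroup approach developed later in the paper). Since the commutant of any self-adjoint set is automatically a weakly closed von Neumann algebra, once one verifies that commuting with this family is equivalent to decomposability together with equivariance, the theorem follows. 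Making this identification rigorous is exactly the content of the non-commutative integration machinery of \cite{Connes-1979}, specialized to the admissible setting of \cite{LPV}, and the assumed countable generation of the $\sigma$-algebras is what legitimizes the underlying direct-integral and Fubini arguments.
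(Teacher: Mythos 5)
Your proposal is correct and, in the end, takes the same route as the paper: the paper's entire proof of Theorem \ref{Neum} is the citation of \cite[Thm.~V.2]{Connes-1979}, and after your sound, standard reductions --- fiberwise algebraic operations, reduction of weak closedness to norm balls via separability of $L^2(V,M)$, decomposability of a weak limit via membership in the commutant of the diagonal algebra $\{M_\phi : \phi \in L^\infty(\Omega)\}$ --- you defer the decisive step, namely realizing $\calN(\calG,G)$ as the commutant of a concrete self-adjoint family so that equivariance survives weak limits, to exactly that machinery of \cite{Connes-1979} and \cite{LPV}. One small correction of detail: in the paper's purely measurable generality the operators implementing the groupoid action are built by convolution against the transversal function $\nu$ rather than from bisections, which belong to the \'etale/inverse-semigroup picture of Section \ref{Colored} and need not exist in useful supply here (a bisection meets each fiber $\calG^\omega$ at most once, and these fibers may be uncountable with $\nu^\omega$ non-atomic, so singleton or countably assembled bisections act on $m$-null sets and detect nothing) --- but since you outsource the rigorous identification to the same references the paper relies on, this is an inaccuracy in the sketch rather than a genuine gap.
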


\subsubsection{The canonical trace}
The admissible setting of \cite{LPV} always gives a canonical weight
on the von Neumann algebra in question. In this section we present
details for our situation and  show, in particular, that then  this
weight is actually a trace. Further details can be found in Section
4 of \cite{LPV}.

\medskip

Let a measure graph $(G,M)$ over a groupoid $\calG$ be given and
$\calN(\calG,G)$ be the associated von Neumann algebra.
 Let $\calN^+(\calG,G)$ denote the set of non-negative self-adjoint operators in $\calN(\calG,G)$. We will show
that every operator $A \in \calN^+(\calG,G)$ gives rise to a {\em
new random variable} $\beta_A$. Integrating this random
variable, we obtain a weight on $\calN(\calG,G)$. This weight will
be shown to be a trace.  We start by associating a transversal
function as well as a random variable with each element in
$\calN^+(\calG,G)$. For a nonnegative function $f $ on $\mathcal{G}^{\omega}$ we  will denote by $q_\omega(f)$ the function on $\pi^{-1}
(\omega)$ given by
\begin{equation*}
q_\omega (f)(x) = \int_{\calG^{\omega}} \av(g^{-1}x) f(g)
d\nu^{\omega}(g)
\end{equation*}
for all $x \in V$. Thus, $q_\omega (f) $ is   the restriction of
$q(f)$ as defined at the end of  Section \ref{subsection: graph over
groupoid} to the fiber $V^\omega$.

To proceed  we will need the (usual) trace $\tr$ on the set
of non-negative operators on an Hilbert space. This trace is defined
by
$$\tr (A) :=\sum_{\iota\in I} \langle A e_\iota, e_\iota\rangle,$$
where $e_\iota$, $\iota\in I$, is an orthonormal basis. It does not
depend on the choice of the orthonormal basis. In particular, is is
invariant under conjugation with unitaries i.e. $\tr (U A U^*) = \tr
(A)$ for any unitary $U$. Note that this trace may well take the
value $\infty$.  We will need this trace for  the Hilbert space
$\ell^2 (V^\omega,\eta^\omega)$ for each $\omega \in\Omega$.
Suppressing the dependence on $\omega\in \Omega$, we will still use
the notation $\tr$.  For a function $f$ on $V^\omega$ we denote by
$M_f$ the operator of multiplication by $f$ in $\ell^2 (V^\omega,
\eta^\omega)$. Similarly, for  a function $f\in \calF^+ (V)$ we
denote by $M_{f_\omega}$ the operator of multiplication by the
restriction of $f$ to $V_\omega$.

We will need to take the trace of operators of the form $B M_f B$
for nonnegative measurable functions $f$ and bounded nonnegative
operators $B$. To do so  we note that - under the additional
assumption of boundedness of $f$ -  the operator $C :=
M_{\sqrt{f}} B $  is bounded with adjoint given by $C^* = B
M_{\sqrt{f}}$. Hence, we have for such $f$ that $B M_f B = C^* C$ is
nonnegative and we can take the trace. For general nonnegative  $f$
we define
$$\tr (B M_f B) := \lim_{n\to\infty} \tr (B M_{f_n} B)$$
with $f_n :=\max\{f,n\}$.

\begin{Lemma} \label{trarand}
Let $A\in \calN^+(\calG,G)$ be given.\\
(a) Then $\phi_A$, given by $\phi_A^\omega(f) :=\tr ( A_\omega^{1/2}
M_{q_\omega (f)} A_\omega^{1/2}),$ $f\in \calF^+( \calG)$, defines a
transversal function.\\
(b) Then $\xi_A$, given by $\xi_A^\omega(f) := \tr (A_\omega^{1/2}
M_{f,\omega} A_\omega^{1/2}),$ $f\in \calF^+ (V)$, is a random
variable.
\end{Lemma}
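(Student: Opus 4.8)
The plan is to reduce both parts to a single nonnegative function on $V$, the diagonal of $A$, defined by $a_A(x):=\langle\delta_x,A_{\pi(x)}\delta_x\rangle$ for $x\in V$. Since $\{\delta_x:x\in V^\omega\}$ is an orthonormal basis of $\ell^2(V^\omega,\eta^\omega)$, computing the trace in this basis yields, for every $f\in\calF^+(V)$, the identity $\tr(A_\omega M_f)=\sum_{x\in V^\omega}f(x)\,a_A(x)$, so that $\xi_A^\omega=\sum_{x\in V^\omega}a_A(x)\,\delta_x$. Thus (b) amounts to showing that $\xi_A$ is exactly the random variable $\xi_{a_A}$ furnished by Proposition \ref{Generating}, for which it suffices to verify that $a_A$ is nonnegative, groupoid-invariant and measurable. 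Nonnegativity is immediate from $A\in\calN^+(\calG,G)$. For invariance I would fix $g\in\calG$ and $x\in V^{s(g)}$ and use $U_g\delta_x=\delta_{gx}$, $U_g^*\delta_{gx}=\delta_x$ together with the equivariance $A_{r(g)}=U_gA_{s(g)}U_g^*$ to obtain $a_A(gx)=\langle\delta_{gx},U_gA_{s(g)}U_g^*\delta_{gx}\rangle=\langle\delta_x,A_{s(g)}\delta_x\rangle=a_A(x)$, which is precisely the invariance hypothesis of Proposition \ref{Generating}.

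The hard part will be the measurability of $a_A$ as a function on $V$; this is the only place where the defining measurability of bounded random operators really enters. The inputs are that $\omega\mapsto\langle f_\omega,A_\omega h_\omega\rangle$ is measurable for all $f,h\in L^2(V,M)$, and that $\calB$ has a countable basis $\{B_n\}$ of finite-measure sets, so each $1_{B_n}\in L^2(V,M)$. For a partition $\{P_i\}$ of $V$ into finitely many such basis sets, the map $\omega\mapsto\langle(1_{P_i})_\omega,A_\omega(1_{P_i})_\omega\rangle$ is measurable, whence $x\mapsto\Theta_N(x):=\sum_i 1_{P_i}(x)\,\langle(1_{P_i})_{\pi(x)},A_{\pi(x)}(1_{P_i})_{\pi(x)}\rangle$ is measurable on $V$, being a sum of $1_{P_i}$ times a measurable function of $\pi(x)$. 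Refining along the partitions generated by $B_1,\dots,B_N$ shrinks the atom containing $x$, within the at most countable fibre $V^{\pi(x)}$, down to $\{x\}$; hence $\Theta_N(x)\to\langle\delta_x,A_{\pi(x)}\delta_x\rangle=a_A(x)$ pointwise and $a_A$ is measurable as a pointwise limit of measurable functions. This diagonal-extraction step is technical and is exactly the kind of argument carried out in \cite{LPV,Connes-1979}; granting it, Proposition \ref{Generating} delivers (b).

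For (a) I would express $\phi_A$ through $\xi_A$ and $\nu$. Using the trace identity with $F=q_\omega(f)$ and then Fubini gives $\phi_A^\omega(f)=\tr(A_\omega M_{q_\omega(f)})=\sum_{x\in V^\omega}a_A(x)\,q_\omega(f)(x)=\int_{\calG^\omega}f(g)\,\psi^\omega(g)\,d\nu^\omega(g)$, where $\psi^\omega(g):=\sum_{x\in V^\omega}a_A(x)\,u(g^{-1}x)$. Thus $\phi_A^\omega$ is the measure with density $\psi^\omega$ against $\nu^\omega$; in particular it is supported on $\calG^\omega$, and its measurability in $\omega$ follows by combining the measurability of $\nu$, of $u$, and of $\xi_A$ via a monotone class argument. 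The remaining, and most interesting, point is the transversal invariance. After writing out the invariance identity and applying the invariance of $\nu$, it reduces to the single fibrewise identity $\psi^{s(g_0)}(g_0^{-1}k)=\psi^{r(g_0)}(k)$ for $k\in\calG^{r(g_0)}$. I would verify this by the substitution $x=g_0 y$ in the sum defining $\psi^{s(g_0)}(g_0^{-1}k)$, using $u(k^{-1}g_0 y)=u(k^{-1}x)$ and the already established invariance $a_A(g_0^{-1}x)=a_A(x)$, which turns the sum over $V^{s(g_0)}$ into the sum over $V^{r(g_0)}$ defining $\psi^{r(g_0)}(k)$. Hence the invariance of $\phi_A$ is a consequence of the invariance of $\nu$ together with that of the diagonal $a_A$, and this is why the equivariance of $A$ is the real engine behind both parts.
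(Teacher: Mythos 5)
The paper does not prove this lemma at all: it is imported verbatim from \cite{LPV}, so your attempt can only be measured against the argument there, and in outline it matches it. Your skeleton is sound: for positive $A$ and $f\in\calF^+(V)$ one may read $\tr(A_\omega M_f)=\tr\bigl(M_f^{1/2}A_\omega M_f^{1/2}\bigr)$ and compute in the basis $\{\delta_x\}$, giving $\xi_A^\omega=\sum_{x\in V^\omega}a_A(x)\delta_x$; the equivariance computation $a_A(gx)=a_A(x)$ is correct; and in (a) the identity $\phi_A^\omega=\psi^\omega\,\nu^\omega$ with $\psi^\omega(g)=\sum_{x\in V^\omega}a_A(x)u(g^{-1}x)$, followed by the substitution $x=g_0y$ using the invariance of $\nu$ and of $a_A$, is exactly the right reduction. (You do gloss the measurability of $x\mapsto q(f)(x)$ needed for (a), but that is part of the standing measurability assumptions of the admissible setting and is harmless.)

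The genuine gap is your measurability argument for the diagonal $a_A$, which fails as written, for two reasons. First, a countable generating family of the $\sigma$-algebra need not separate points, so the atoms of the partition generated by $B_1,\dots,B_N$, intersected with the countable fibre $V^{\pi(x)}$, need not shrink to $\{x\}$ at all. Second, and more seriously, even when $\bigcap_N P_N(x)\cap V^{\pi(x)}=\{x\}$, the convergence $\Theta_N(x)\to a_A(x)$ requires the atom to be \emph{eventually equal} to $\{x\}$ inside the fibre; if $P_N(x)\cap V^{\pi(x)}$ stays infinite, the restricted indicator need not even lie in $\ell^2(V^{\pi(x)})$, and the quadratic form does not converge. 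Concretely, take a fibre identified with $\NN$ and $A_\omega=\langle\,\cdot\,,v\rangle v$ with $v=(\varepsilon_n)\in\ell^2$ but $\sum_n\varepsilon_n=\infty$: then $\langle 1_Q,A_\omega 1_Q\rangle=\bigl(\sum_{n\in Q}\varepsilon_n\bigr)^2=\infty$ for the relevant infinite sets $Q$, while $a_A(x)=\varepsilon_x^2<\infty$, so $\Theta_N(x)$ can be identically $+\infty$ and the pointwise limit claim is false. The clean repair, available inside this very paper, is the Carleman structure: by the proposition ``$\calN(\calG,G)=\calK$'' (whose proof — $\calK$ is a right ideal and the identity is Carleman — does not use the present lemma, so there is no circularity), $A$ has a measurable kernel $k\in\calF(V^{(2)})$ with $a_A(x)=k(x,x)$ on almost every fibre; composing with the measurable diagonal embedding $j:V\to V^{(2)}$, $x\mapsto(x,x)$, as in Proposition \ref{Measurability-product}, gives measurability of $a_A=k\circ j$ in one stroke, after which the rest of your argument, including the appeal to Proposition \ref{Generating}, goes through.
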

\begin{proof} These statements are contained in Lemma 4.2 of \cite{LPV}.
  For convenience of the reader we indicate the
proof of (b). The proof of (a) can be  done along very similar
lines:
 Clearly the map $f\mapsto \xi_A (f)$ is
monotone (i.e. $\xi_A (g) \leq \xi_A (f)$ for $g\leq f$). Hence, it
is  a measure. Moreover, this measure is supported on $V^\omega$ by
the very definition of $M_{f,\omega}$. The invariance of $\xi_A$
follows now easily from the invariance properties of the
$(A_\omega)$ and the invariance of the trace under conjugation with
a unitary operator.
\end{proof}

Let us recall the following definitions. A {\em weight} on a von
Neumann algebra $\calN$ is a map $\tau: \calN^+ \to [0, \infty ]$
satisfying $\tau(A + B)= \tau (A) + \tau(B)$ and $\tau(\lambda
A)=\lambda \tau(A)$ for arbitrary $A,B\in \calN^+$ and $\lambda \ge
0$. The weight is called {\em normal} if $\tau(A_n)$ converges to
$\tau(A)$ whenever $A_n$ is an increasing sequence of operators
(i.e. $A_n \leq A_{n+1}, n\in \NN$)  converging strongly to $A$. It
is called {\em faithful} if $\tau(A)=0$ implies $A=0$. It is called
{\em semifinite} if $\tau(A)=\sup\{\tau(B): B\leq A,
\tau(B)<\infty\}$.  If a weight $\tau$ satisfies $\tau(C
C^*)=\tau(C^* C)$ for arbitrary $C\in \calN$ (or equivalently
$\tau(U A U^*)=\tau (A) $ for arbitrary $A\in \calN^+$ and unitary
$U\in \calN$, cf.~ \cite[Cor. 1 in I.6.1]{Dixmier-1981}), it is
called a {\em trace}.

\medskip

In our situation we have a canonical candidate for a weight at our
disposal. This is introduced in the next proposition.

\begin{Proposition}[Introducing $\tau$]
 Let $(G,M)$ be a measure graph over $\calG$.  For $A\in \calN^+(\calG,G)$, the
expression
\begin{equation*}
\tau(A) :=   \int_\Omega \tr (A_\omega^{\frac{1}{2}} M_{\av_\omega}
A_\omega^{\frac{1}{2}} ) d m (\omega)
\in[0,\infty]
\end{equation*}
does not depend on $\av \in \calF^+(G)$ provided $\nu \ast \av\equiv 1$.
\end{Proposition}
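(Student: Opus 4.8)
The plan is to recognize the expression defining $\tau(A)$ as the integral of a random variable and then to invoke the independence statement of Lemma \ref{ind}(b). Concretely, the assertion splits into two parts: the equality of the two displayed integrals, and the independence of their common value from the choice of $u$. The first part I would settle fiberwise, while the second reduces directly to the integration theory of random variables developed in Section \ref{subsection: graph over groupoid}.

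For the fiberwise equality, fix $\omega \in \Omega$ and write $M := M_{u_\omega}$ for the bounded, self-adjoint, non-negative multiplication operator by $u_\omega \geq 0$ on $\ell^2(V^\omega, \eta^\omega)$. Setting $B_\omega := A_\omega^{1/2} M^{1/2}$, with both self-adjoint square roots available since $A_\omega, M \geq 0$, one computes
$$A_\omega^{1/2} M A_\omega^{1/2} = B_\omega B_\omega^* \quad\text{and}\quad M^{1/2} A_\omega M^{1/2} = B_\omega^* B_\omega.$$
The identity $\tr(B_\omega B_\omega^*) = \tr(B_\omega^* B_\omega)$, valid in $[0,\infty]$ for the canonical trace on the bounded operators of $\ell^2(V^\omega)$ (both sides being sums of the type $\sum_i \|B_\omega^* e_i\|^2$ over an orthonormal basis), then yields the fiberwise equality, and integrating over $\Omega$ against $m$ gives the equality of the two expressions for $\tau(A)$.

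For the independence from $u$, I would read the common fiberwise value as $\tr(A_\omega M_{u_\omega})$, understood through the standard convention $\tr(A_\omega M_{u_\omega}) := \tr(A_\omega^{1/2} M_{u_\omega} A_\omega^{1/2}) \in [0,\infty]$ for products of non-negative operators. By Lemma \ref{trarand}(b) the assignment $\xi_A^\omega(f) := \tr(A_\omega M_f)$ defines a random variable $\xi_A$ with values in $G$. Hence, using Definition \ref{Integral-random-variable},
$$\tau(A) = \int_\Omega \xi_A^\omega(u)\, dm(\omega) = \int \xi_A,$$
which is precisely the integral of $\xi_A$ evaluated on the averaging function $u$. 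Lemma \ref{ind}(b) then guarantees that this quantity does not depend on the choice of $u \in \calF^+(V)$ as long as $\nu \ast u \equiv 1$, which is the claim.

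The steps above are short, so the point requiring genuine care is purely at the level of functional-analytic conventions: since $A_\omega$ need not be trace class and $\tau(A)$ may equal $+\infty$, both the identity $\tr(B_\omega B_\omega^*) = \tr(B_\omega^* B_\omega)$ and the identification $\tr(A_\omega M_{u_\omega}) = \tr(A_\omega^{1/2} M_{u_\omega} A_\omega^{1/2})$ must be justified as equalities in $[0,\infty]$ rather than via cyclicity of a finite trace. Once this is in place, the measurability of $\omega \mapsto \xi_A^\omega(u)$ and the admissibility of $u$ are already supplied by Lemma \ref{trarand}(b) and the hypothesis $\nu \ast u \equiv 1$, so that Lemma \ref{ind}(b) applies verbatim and no further estimates are needed.
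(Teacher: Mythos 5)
Your argument is correct and follows exactly the route the paper takes: its one-line proof reads ``This follows directly from Lemma \ref{ind} and Lemma \ref{trarand}'', which are precisely the two results you invoke, with your fiberwise identity $\tr(B_\omega B_\omega^*) = \tr(B_\omega^* B_\omega)$ and the reading of $\omega \mapsto \tr(A_\omega M_{u_\omega})$ as the random variable $\xi_A$ evaluated at $u$ simply making explicit what the paper leaves implicit. Your added care about interpreting the traces as equalities in $[0,\infty]$ (rather than via cyclicity of a finite trace) is a sound and welcome elaboration, not a deviation.
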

\begin{proof}
This follows directly  from Lemma \ref{ind} and Lemma \ref{trarand}.
\end{proof}

We are going to show that the map
$$\tau : \calN^+(\calG,G) \to [0,\infty], A\mapsto \tau (A),$$ is a
faithful, semifinite normal trace on $\calN(\calG,G)$. In order to
do so we will present some additional pieces of information. These
may also be interesting in  their own right.

\medskip

An operator $K$ on $L^2(V,M)$ is called a {\em Carleman operator}
(cf.~\cite{Weidmann-1980} for further details) if there exists a $k
\in \calF(V^{(2)})$ with
\begin{equation*} k(x,\cdot)
\in \ell^2(V^{\pi(x)},\eta^{\pi(x)}) \;\:\mbox{for all $x \in V$}
\end{equation*}
 such that for any $f\in L^2 (V,M)$
\begin{equation*}
Kf(x) = \int_{V^{\pi(x)}} k(x,y) f(y)
d\eta^{\pi(x)}(y) = \sum_{y \in V^{\pi(x)}}  k(x,y) f(y) =:K_{\pi(x)}f_{\pi(x)}(x)
\end{equation*}
in the sense of $L^2 (V,M)$.
 This $k$ is called the {\em
kernel} of $K$: Obviously, $K = \int_\Omega^\oplus K_\omega dm$.
Let $\calK$ be the set of all Carleman operators satisfying  for all
$g\in \calG$
\begin{equation}
\label{carl} k(gx,gy) = k(x,y)
\end{equation}
for all $x,y\in V^{(2)}$.

For a Carleman operator $K$  the expressions $\tau(K K^*)$ and
$\tau(K^* K)$ can directly be calculated.

\begin{Proposition}[Proposition 4.5 of \cite{LPV}] \label{tausch}
Let $(G,M)$ be a measure graph over $\calG$. Let $K \in \calK$ with
kernel $k$  be given. Then we have
\begin{equation*} \tau(K^* K) = \int_\Omega \int_{V^\omega} \int_{V^\omega}
\av(y) \vert k(x,y) \vert^2 d\eta^\omega(x) d\eta^\omega(y) d m
(\omega) = \tau(K K^*), \end{equation*} for any $\av \in \calF^+(V)$
satisfying $\nu \ast \av \equiv 1$.
\end{Proposition}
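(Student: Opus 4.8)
The plan is to reduce each of the two traces to an explicit double sum over the fibres and then to pass between the two sums using the invariance of $m$. Recall that by definition $\tau(A) = \int_\Omega \tr(M_{u_\omega}^{1/2} A_\omega M_{u_\omega}^{1/2})\, dm(\omega)$ for $A \in \calN^+(\calG,G)$, and that the equivariance \eqref{carl} ensures that $(K_\omega^* K_\omega)_\omega$ and $(K_\omega K_\omega^*)_\omega$ are equivariant families of non-negative operators, so that the trace expressions are well defined as elements of $[0,\infty]$. On each fibre I would use the orthonormal basis $\{\delta_x : x \in V^\omega\}$ of $\ell^2(V^\omega,\eta^\omega)$ together with the identity $\tr(C^*C) = \sum_{x \in V^\omega} \|C\delta_x\|^2$. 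Taking $C = K_\omega M_{u_\omega}^{1/2}$ and using $K_\omega \delta_y(x) = k(x,y)$ gives $\|C\delta_y\|^2 = u(y)\sum_{x \in V^\omega} |k(x,y)|^2$, whence
\[
\tau(K^*K) = \int_\Omega \int_{V^\omega} \int_{V^\omega} u(y)\, |k(x,y)|^2 \, d\eta^\omega(x)\, d\eta^\omega(y)\, dm(\omega),
\]
which is exactly the asserted formula. The same computation with $C = K_\omega^* M_{u_\omega}^{1/2}$ and $K_\omega^*\delta_x(z) = \overline{k(x,z)}$ yields $\tau(KK^*) = \int_\Omega \int_{V^\omega}\int_{V^\omega} u(x)\,|k(x,y)|^2\, d\eta^\omega(x)\,d\eta^\omega(y)\,dm(\omega)$.

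It then remains to identify these two quantities, i.e.\ to exchange the weight $u(x)$ for $u(y)$ after integration over $\Omega$. Here I would exploit that $F(x,y) := |k(x,y)|^2$ is $\calG$-invariant, $F(gx,gy) = F(x,y)$, by \eqref{carl}. Starting from the expression for $\tau(KK^*)$ I would insert the constant $1 = \nu\ast u(y) = \int_{\calG^\omega} u(g^{-1}y)\, d\nu^\omega(g)$ (valid since $\pi(y) = \omega$) and interchange the $\nu^\omega$-integral with the sums by Tonelli. This exhibits $\tau(KK^*) = (m\circ\nu)(\Phi)$ for the non-negative function
\[
\Phi(g) := \sum_{x,y \in V^{r(g)}} u(x)\, u(g^{-1}y)\, F(x,y) \quad \text{on } \calG.
\]
Applying the invariance $m\circ\nu = (m\circ\nu)^\sim$ then replaces $\Phi$ by $\tilde\Phi$, $\tilde\Phi(g) = \Phi(g^{-1})$. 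Since $r(g^{-1}) = s(g)$ and $(g^{-1})^{-1} = g$, one has $\Phi(g^{-1}) = \sum_{x,y\in V^{s(g)}} u(x)\,u(gy)\,F(x,y)$; substituting $x' = gx$, $y' = gy$ and using $u(gy) = u(y')$, $u(x) = u(g^{-1}x')$ and $F(x,y) = F(x',y')$ turns this into $\sum_{x',y'\in V^{r(g)}} u(g^{-1}x')\,u(y')\,F(x',y')$. Finally integrating out $g$ via $\int_{\calG^\omega} u(g^{-1}x')\,d\nu^\omega(g) = \nu\ast u(x') = 1$ collapses the extra factor and leaves exactly the expression for $\tau(K^*K)$, completing the argument.

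The genuinely delicate step is this last invariance manipulation: the two double integrals differ only in whether the single averaging factor sits on the first or on the second kernel variable, and there is no pointwise reason for the integrands to agree — the equality is forced only after integration, and precisely by the defining symmetry $m\circ\nu = (m\circ\nu)^\sim$ of an invariant measure. The role of the auxiliary insertion of $\nu\ast u \equiv 1$ is exactly to manufacture a groupoid function to which this symmetry can be applied, and then to remove the superfluous factor afterwards. The remaining points are routine within the present framework: all integrands are non-negative, so the Tonelli interchanges and the passage to values in $[0,\infty]$ cause no difficulty, while measurability of $\Phi$ and of the fibrewise kernels follows from the measure-graph axioms and the properties of the map $q$. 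The equivariance \eqref{carl} enters twice, once to give the equivariance of the families $(K_\omega^* K_\omega)$ and $(K_\omega K_\omega^*)$, and once in the form $F(gx,gy) = F(x,y)$ that drives the change of variables above.
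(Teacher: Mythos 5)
Your proof is correct; note that the paper itself gives no argument for this proposition but imports it from \cite{LPV}, and your computation is essentially the one found there. Both reduce $\tau(K^*K)$ and $\tau(KK^*)$ to fibrewise double sums $\sum_{x,y} u(\cdot)\,|k(x,y)|^2$ and then trade the weight $u$ between the two kernel variables by inserting $\nu \ast u \equiv 1$, invoking the invariance $m\circ\nu = (m\circ\nu)^\sim$ together with the equivariance \eqref{carl}, exactly as in your final step.
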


In our situation all elements of the von Neumann algebra are
Carleman operators.

\begin{Proposition}[$\calN(\calG,G) = \calK$]
 Let $(G,M)$ be a graph over $\calG$. Then, any element of
 $\calN(\calG,G)$ is a Carleman operator.
\end{Proposition}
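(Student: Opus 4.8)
The plan is to exploit that each fiber $V^\omega$ is at most countable and that $\eta^\omega$ is the counting measure on it, so that the point masses $\{\delta_x : x\in V^\omega\}$ form an orthonormal basis of $\ell^2(V^\omega,\eta^\omega)$. For a representative $(A_\omega)$ of a class in $\calN(\calG,G)$ with $\|A_\omega\|\le\|A\|$ for $m$-almost every $\omega$, I would define the candidate kernel on $V^{(2)}$ by $k(x,y):=\langle\delta_x, A_{\pi(x)}\delta_y\rangle = (A_{\pi(x)}\delta_y)(x)$, setting $k:=0$ on the $m$-null set of fibers where the norm bound fails. Since $\delta_x\in\ell^2(V^{\pi(x)},\eta^{\pi(x)})$ for every vertex, this is well defined pointwise, and the remaining task is to verify that $k$ has the three properties required of a Carleman kernel lying in $\calK$.

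The elementary properties follow by expanding in the orthonormal basis. For the $\ell^2$ condition on rows one computes $\sum_{y\approx x}|k(x,y)|^2 = \sum_{y\in V^{\pi(x)}}|\langle A_{\pi(x)}^*\delta_x,\delta_y\rangle|^2 = \|A_{\pi(x)}^*\delta_x\|^2\le\|A\|^2<\infty$, so $k(x,\cdot)\in\ell^2(V^{\pi(x)},\eta^{\pi(x)})$ for all $x$. For the integral representation, expanding $f_\omega=\sum_y f(y)\delta_y$ gives $A_\omega f_\omega(x)=\sum_{y\in V^\omega}k(x,y)f(y)=\int_{V^\omega}k(x,y)f(y)\,d\eta^\omega(y)$, which is exactly the Carleman form relative to the counting measure $\eta^\omega$. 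Finally, the equivariance relation \eqref{carl} is a direct consequence of the covariance condition $A_{r(g)}=U_gA_{s(g)}U_g^*$ built into the definition of a bounded random operator: since $U_g\delta_y=\delta_{gy}$ and hence $U_g^*\delta_{gx}=\delta_x$, one obtains $k(gx,gy)=\langle\delta_{gx},U_gA_{s(g)}U_g^*\delta_{gy}\rangle=\langle\delta_x,A_{s(g)}\delta_y\rangle=k(x,y)$ (here $\pi(gx)=r(g)$ and $\pi(x)=s(g)$), and choosing the representative with covariance for all $g$ makes this hold everywhere.

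The step I expect to be the main obstacle is the joint measurability of $k$ as a function on $V^{(2)}$. The difficulty is that $M=m\circ\eta$ may be non-atomic, so individual vertices can carry no $M$-mass and cannot be isolated by $L^2(V,M)$-functions; the matrix entries live on the fibers and must be shown to depend measurably on $(x,y)$ across fibers. To handle this I would start from the measurability built into the definition of $\calN(\calG,G)$, namely that $\omega\mapsto\langle g_\omega,A_\omega f_\omega\rangle$ is measurable for all $f,g\in L^2(V,M)$, and rewrite, for bounded $f,g$, the quantity $\langle g,Af\rangle_{L^2(V,M)}=\int_\Omega\sum_{x,y\in V^\omega}\overline{g(x)}\,k(x,y)\,f(y)\,dm(\omega)=\int_{V^{(2)}}\overline{g(x)}f(y)\,k(x,y)\,d\Lambda(x,y)$, where $\Lambda$ is the natural measure on $V^{(2)}$ given by $\Lambda(h)=\int_\Omega \eta^\omega\otimes\eta^\omega(h)\,dm(\omega)$. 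Letting $f,g$ range over the countable generating family of the $\sigma$-algebra on $V$ and invoking the monotone-class argument already used for random variables, applied to products $F(x,y)=\overline{g(x)}f(y)$, these identities determine $k$ as the $\Lambda$-almost-everywhere unique density representing the bounded bilinear form $(g,f)\mapsto\langle g,Af\rangle$ on $L^2(V^{(2)},\Lambda)$, whence $k$ is $\Lambda$-measurable. Since $\Lambda$ is assembled fiberwise from the counting measures $\eta^\omega$, this yields a jointly measurable representative of $k$ on $V^{(2)}$, which together with the pointwise construction above establishes $A\in\calK$.
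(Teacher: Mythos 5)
Your pointwise construction and the algebraic verifications are sound: with $\eta^\omega$ the counting measure on the at most countable fiber $V^\omega$, the kernel $k(x,y):=\langle\delta_x,A_{\pi(x)}\delta_y\rangle$ indeed has rows in $\ell^2$ with norm at most $\|A\|$, reproduces the fiber operators by summation, and inherits \eqref{carl} from the covariance condition, exactly as you compute. The genuine gap is the step you yourself flag as the main obstacle, and the proposed fix does not work. First, almost-everywhere \emph{uniqueness} of a density cannot confer measurability on your pointwise-defined $k$: uniqueness identifies any two measurable densities $\Lambda$-a.e., but you must first \emph{exhibit} one measurable function representing the form, and that existence is the entire problem. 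Second, the existence mechanism you invoke --- Riesz representation of the bilinear form on $L^2(V^{(2)},\Lambda)$ --- fails, because kernels of elements of $\calN(\calG,G)$ are only row-wise $\ell^2$, never square-integrable over $V^{(2)}$ in general. Already for $A=I$ the kernel is the diagonal indicator, and $\|1_{\mathrm{diag}}\|^2_{L^2(\Lambda)}=\int_\Omega |V^\omega|\,dm(\omega)=\infty$ whenever the fibers are infinite (e.g.\ any infinite periodic graph). Concretely, for a periodic graph with counting measure $M$, the combinations $h_n=\sum_{i=1}^n\overline{\delta_{x_i}}\otimes\delta_{x_i}$ with distinct vertices $x_1,\ldots,x_n$ satisfy $\langle \cdot,I\,\cdot\rangle$-value $n$ while $\|h_n\|_{L^2(\Lambda)}=\sqrt{n}$, so the form admits no bounded extension to the span of elementary tensors and hence no $L^2(\Lambda)$ density; if one existed, your own uniqueness computation with point masses would force it to equal $1_{\mathrm{diag}}$, a contradiction. (A secondary soft spot: even the identity $\langle g,Af\rangle=\int_{V^{(2)}}\overline{g(x)}f(y)k(x,y)\,d\Lambda(x,y)$ requires absolute convergence of the fiberwise double sums, which boundedness of $A_\omega$ alone does not guarantee.) Note also that the $\sigma$-algebra on $V^{(2)}$ is merely the trace of the product $\sigma$-algebra and need not separate points within a fiber, which is why the measurability of $k$ is genuinely delicate and is, in substance, the whole content of the proposition.

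The paper's own proof avoids the direct construction entirely: it quotes from \cite{LPV} that $\calK$ is a \emph{right ideal} in $\calN(\calG,G)$, observes that the identity is a Carleman operator with the manifestly measurable kernel $k(x,y)=1$ if $x=y$ and $0$ otherwise (the diagonal is measurable in $V^{(2)}$ by Proposition \ref{Measurability-basic-quantities}), and concludes $A=I\cdot A\in\calK$ for every $A\in\calN(\calG,G)$. The measurability work your argument is missing is precisely what the right-ideal property encapsulates: for $K\in\calK$ with kernel $k$ and $A\in\calN(\calG,G)$, the kernel of $KA$ is obtained by pairing the measurable rows $k(x,\cdot)$ with $A$, and showing this is jointly measurable in $(x,y)$ uses the measurability axiom $\omega\mapsto\langle g_\omega,A_\omega f_\omega\rangle$ together with the countably generated $\sigma$-algebras --- an argument carried out in \cite{LPV} and not recoverable from a na\"ive $L^2(\Lambda)$ Riesz representation. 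To repair your proof, either reproduce that ideal argument or replace the Riesz step by a localized construction that only ever represents the rows $k(x,\cdot)$ (which \emph{are} $\ell^2$), not the full kernel.
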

\begin{proof} As shown in Proposition 4.4 of  \cite{LPV} the set $\calK$ is a right
ideal in $\calN(\calG,G)$. Now, obviously, the identity is a
Carleman operator (with kernel given by $k(x,y) = 1$ if $x = y$ and
$k(x,y) = 0$ otherwise). Hence, the desired statement follows.
\end{proof}

After these preparations we can now state (and prove) the main
result of this subsection.

\begin{Theorem} \label{snw} Let $(G,M)$ be a measure graph over
$\calG$. Then, the map
$$\tau :\calN^+(\calG,G) \to [0,\infty]$$
is a faithful, semifinite, normal trace. Furthermore, this trace is
finite, i.e., $\tau$ gives a finite value when applied to the
identity. Thus, $\tau$ can be uniquely extended to a continuous
linear  map on all of $\calN(\calG,G)$.
\end{Theorem}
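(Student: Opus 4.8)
The plan is to check the defining properties one by one, leaning on the structural results already in place. First I would confirm that $\tau$ is a weight. For each fixed $\omega$ the fiberwise map $A_\omega \mapsto \tr(M_{u_\omega}^{1/2} A_\omega M_{u_\omega}^{1/2})$ is additive and positively homogeneous on nonnegative operators, and both properties survive integration over $\Omega$ against $m$; this gives additivity and positive homogeneity of $\tau$ on $\calN^+(\calG,G)$. Monotonicity, $A \leq B \Rightarrow \tau(A) \leq \tau(B)$, then follows at once by applying additivity to $B = A + (B-A)$ with $B - A \in \calN^+(\calG,G)$.

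For the tracial identity I would invoke the two propositions immediately preceding the theorem. The proposition identifying $\calN(\calG,G)$ with $\calK$ says that every $C \in \calN(\calG,G)$ is a Carleman operator with $\calG$-invariant kernel, and Proposition \ref{tausch} then gives $\tau(C^*C) = \tau(CC^*)$ directly, which is exactly the characterization of a trace recalled above. Finiteness is an equally short computation: since $I_\omega^{1/2} = I_\omega$ one has $\tau(I) = \int_\Omega \tr(M_{u_\omega})\, dm(\omega) = \int_\Omega \eta^\omega(u)\, dm(\omega) = \int \eta$, which is finite by the integrability of the canonical random variable that is part of the definition of a measure graph. Finiteness has two immediate payoffs. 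Because $A \leq \|A\|\, I$, monotonicity yields $\tau(A) \leq \|A\|\,\tau(I) < \infty$ for every $A \in \calN^+(\calG,G)$, so $\tau$ is everywhere finite and hence trivially semifinite (take $B = A$ in the defining supremum). The same bound $|\tau(A)| \leq \tau(I)\,\|A\|$, combined with the decomposition of an arbitrary element of $\calN(\calG,G)$ into real and imaginary and then positive and negative parts, lets me extend $\tau$ by linearity to all of $\calN(\calG,G)$; the bound furnishes continuity in operator norm, and uniqueness is forced since $\calN^+(\calG,G)$ spans $\calN(\calG,G)$.

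Faithfulness is where I would use the strict positivity of the averaging function guaranteed by the measure-graph axioms. If $\tau(A) = 0$, then the nonnegative integrand $\tr(M_{u_\omega}^{1/2} A_\omega M_{u_\omega}^{1/2}) = \sum_{x \in V^\omega} u(x)\,\langle \delta_x, A_\omega \delta_x\rangle$ vanishes for $m$-almost every $\omega$; since $u(x) > 0$ and each summand is nonnegative, every diagonal entry $\langle \delta_x, A_\omega \delta_x\rangle = \|A_\omega^{1/2}\delta_x\|^2$ vanishes, whence $A_\omega^{1/2} = 0$ and $A_\omega = 0$ for $m$-almost all $\omega$, i.e. $A = 0$. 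Finally, for normality I would marry the fiberwise normality of the ordinary trace with monotone convergence on $\Omega$: for an increasing sequence $A_n \nearrow A$ in $\calN^+(\calG,G)$ converging strongly, the fibers $(A_n)_\omega$ increase to $A_\omega$ for $m$-almost every $\omega$, so $\tr(M_{u_\omega}^{1/2}(A_n)_\omega M_{u_\omega}^{1/2})$ increases to $\tr(M_{u_\omega}^{1/2} A_\omega M_{u_\omega}^{1/2})$, and the monotone convergence theorem for the $m$-integral yields $\tau(A_n) \nearrow \tau(A)$. The step I expect to be the main obstacle is precisely this passage to the fibers: one must show that strong convergence of the increasing sequence on the direct-integral Hilbert space $\int_\Omega^\oplus \ell^2(V^\omega)\, dm(\omega)$ descends to monotone strong convergence of the fibers for $m$-almost every $\omega$, together with the attendant measurability, which is exactly the direct-integral bookkeeping that Connes' non-commutative integration framework is designed to supply.
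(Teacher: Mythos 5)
Your proposal is correct and, at its core, follows the same route as the paper: the tracial property is obtained by combining Proposition \ref{tausch} with the fact that every element of $\calN(\calG,G)$ is a Carleman operator with invariant kernel (the two propositions immediately preceding the theorem), and finiteness comes from the computation $\tau(I)=\int_V u(x)\,dM(x)=\int\eta<\infty$, which is finite by the very definition of a measure graph. The difference is one of self-containedness: where the paper simply cites \cite{Connes-1979} and \cite{LPV} for $\tau$ being a faithful, semifinite, normal weight, you prove these properties directly. Your faithfulness argument is legitimate because $\tau$ is independent of the choice of averaging function and the measure-graph axioms guarantee a strictly positive one, so the diagonal expansion $\tr(M_{u_\omega}^{1/2}A_\omega M_{u_\omega}^{1/2})=\sum_{x\in V^\omega}u(x)\langle\delta_x,A_\omega\delta_x\rangle$ in the orthonormal basis $\{\delta_x\}$ of $\ell^2(V^\omega,\eta^\omega)$ forces $A_\omega=0$ almost everywhere. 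Your observation that finiteness renders semifiniteness trivial (via $A\le\|A\|I$, hence $\tau(A)\le\|A\|\,\tau(I)$) is actually cleaner than appealing to general semifiniteness from the references, and the norm bound plus decomposition into positive parts is precisely the ``standard theory'' the paper invokes for the extension. Finally, the step you flag as the main obstacle --- that an increasing, strongly convergent sequence of decomposable operators has fibers increasing strongly to the fibers of the limit for $m$-almost every $\omega$ --- does go through: each inequality $A_n\le A_{n+1}$ holds fiberwise off a null set, countably many exceptional null sets can be discarded at once, the fiberwise limits exist by the uniform bound $A_n\le A$, and the resulting decomposable operator is identified with $A$ by testing against vectors; this is exactly the direct-integral bookkeeping the paper outsources to \cite{Connes-1979}, so your proof is a filled-in version of the paper's rather than a genuinely different one.
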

\begin{proof} It follows from \cite{Connes-1979} (see Theorem 4.2 of  \cite{LPV} as
well) that $\tau$ is a faithful, semifinite weight. From the
preceding two propositions it follows immediately that $\tau$ is a
trace.  Now, a direct calculation shows that for the identity $I$ we
obtain
$$\tau (I) = \int_V \av(x) dM (x).$$
By definition of a measure graph the  above integral is finite and
the identity has a finite trace. Whenever $A$ is now a nonnegative element in the von
Neuman algebra we  have $A \leq \|A\| I$ (where $\|A\|$ denotes the
norm of $A$). Thus, monotonicity of the trace gives
$$0\leq \tau (A) \leq \|A\|\tau (I).$$
So, the trace is a continuous functional  on the nonnegative
elements. Given this, existence and uniqueness of a continuous
extension to the whole von Neuman algebra is straightforward.
\end{proof}

We finish this section by introducing  two special Carleman
operators.

\begin{Proposition}[The adjacency and the degree operator] Let $a_G$ be the
adjacency matrix of the graph $G$ and $\deg$ its degree function. Then $a_G$ is
the kernel of a Carleman operator which belongs to $\mathcal{N}(\mathcal{G},G)$. The associated  operator will be denoted by
$A_G$ and called the adjacency operator. Furthermore, multiplication by $\deg$
provides a Carleman operator belonging to $\mathcal{N}(\mathcal{G},G)$ which will be denoted by ${\rm Deg}_G$ and called
the degree operator.
\end{Proposition}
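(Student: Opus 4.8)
The plan is to exhibit each of the two operators as an element of $\calK$ and then to invoke the identity $\calN(\calG,G)=\calK$ proved above (more precisely the inclusion $\calK\subseteq\calN(\calG,G)$, which is what we need). Thus for each operator I must check three things: that it is a bounded operator on $L^2(V,M)$, that its kernel $k$ lies in $\calF(V^{(2)})$ with $k(x,\cdot)\in \ell^2(V^{\pi(x)},\eta^{\pi(x)})$ for every $x\in V$, and that this kernel satisfies the equivariance condition \eqref{carl}. All three requirements will follow from the measurability results of Section~\ref{Framework} together with one elementary analytic estimate.

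For the adjacency operator the candidate kernel is $a_G$ itself. Its measurability on $V^{(2)}$ is part of the definition of a measurable graph, and since $a_G(x,\cdot)$ takes the value $1$ on at most $D$ vertices and $0$ elsewhere, it lies in $\ell^2(V^{\pi(x)},\eta^{\pi(x)})$ with $\|a_G(x,\cdot)\|_2^2=\deg(x)\leq D$. Boundedness of the associated operator I would obtain from the Schur test: since $a_G$ is symmetric and $\sum_{y\approx x} a_G(x,y)=\deg(x)\leq D$ uniformly in $x$, the fiber operators $(A_G)_\omega$ satisfy $\|(A_G)_\omega\|\leq D$, whence, using the identification \eqref{iso}, $\|A_G f\|^2=\int_\Omega \|(A_G)_\omega f_\omega\|^2\,dm(\omega)\leq D^2\|f\|^2$. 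Finally, the equivariance $a_G(gx,gy)=a_G(x,y)$ holds for all $g$ and all $x,y$, not merely almost everywhere, because $J(g)$ is by definition a graph isomorphism and hence preserves adjacency.

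For the degree operator, multiplication by $\deg$ is the Carleman operator with diagonal kernel $k(x,y)=\deg(x)\mathbf{1}_{\{x=y\}}$. Here measurability of $k$ follows by combining the measurability of $\deg=a_G\widetilde{\ast}1$ (Proposition~\ref{Measurability-product}) with the measurability of the diagonal (Proposition~\ref{Measurability-basic-quantities}(b)); the $\ell^2$-condition is immediate since $k(x,\cdot)$ has a single nonzero entry. Boundedness is clear from $0\leq\deg\leq D$, so that $\|{\rm Deg}_G\|\leq D$. Equivariance again reduces to the graph-isomorphism property of $J(g)$: a graph isomorphism preserves vertex degrees, giving $\deg(gx)=\deg(x)$, and it preserves the diagonal, so that $k(gx,gy)=k(x,y)$.

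The argument is essentially a matching of the three defining requirements of $\calK$ against the earlier measurability propositions, so no single step presents a serious obstacle. The only point requiring genuine, if standard, analytic input is the uniform fiberwise norm bound for $A_G$, which I would supply via the Schur test using the uniform degree bound $D$ built into the definition of a graph. Once membership in $\calK$ is verified for both operators, the conclusion follows at once from $\calK\subseteq\calN(\calG,G)$.
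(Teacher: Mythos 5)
Your proof is correct and takes essentially the same route as the paper: one verifies measurability of the two kernels (for $a_G$ by the definition of a measurable graph, for the diagonal kernel of ${\rm Deg}_G$ via Propositions \ref{Measurability-basic-quantities} and \ref{Measurability-product}) and their invariance under $\calG$, which acts by graph isomorphisms, and then membership in $\calN(\calG,G)$ follows from the inclusion $\calK \subseteq \calN(\calG,G)$. The only difference is that you spell out the row-$\ell^2$ condition and the uniform fiberwise bound $\|A_G\|\leq D$ via the Schur test, details the paper leaves implicit at this point and only records later in the proof of Lemma \ref{Determinant Lemma 1}.
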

\begin{proof} We first show the measurability statements of the corresponding kernels. The function $a_G$ is measurable on $V^{(2)}$ by definition of a measurable graph. The kernel of ${\rm Deg}_G$ is given by
$$V^{(2)} \to \RR,\, (x,y) \mapsto 1_{\{(z,z)\, :\, z \in V\}}(x,y) \deg (y). $$
The measurability of this function follows from the Propositions \ref{Measurability-basic-quantities} and \ref{Measurability-product} and the definition of the $\sigma$-algebra on $V^{(2)}$. The invariance of both kernels under the action of $\mathcal{G}$ is clear as $\mathcal{G}$ acts upon $V$ by graph isomorphisms.
\end{proof}

\subsection{The determinant formula}

In order to state the main theorem of this section we need a holomorphic determinant on
certain operators of $\mathcal{N}(\mathcal{G},G)$. A natural way to obtain such
a functional is to set
$${\rm det}_\tau(T) := \exp\, \circ \, \tau \, \circ \log
(T).$$
Here, the logarithm of an operator $T$ with $\|I-T\| < 1$ is defined via the
power series expansion of the main branch of the logarithm around 1. Namely, we
let
$$\log(T) = - \sum_{k=1}^\infty \frac{1}{k}(I-T)^k. $$
Since the trace $\tau$ is continuous in the norm topology  the determinant function
$${\rm det}_\tau: \{T \in\mathcal{N}(\mathcal{G},G)\,:\, \|T-I\|< 1\} \to \CC,\, T \mapsto {\rm det}_\tau(T)  $$
is holomorphic.
\begin{Remark}
The above definition of a determinant functional suffices for the purposes of this paper. As discussed in \cite{GIL08} one can use holomorphic functional calculus to extend  the definition of the determinant to operators whose convex hull of the spectrum does not contain $0$.
\end{Remark}

\begin{Definition}[$L^2$-Euler characteristic]
 Let $(G,M)$ be a measure graph over a groupoid. We set $Q_G = {\rm Deg}_G -
I$. The number
 $$\chi_{(G,M)} = \frac{1}{2}\tau(I-Q_G)$$
 is called the $L^2$-Euler characteristic of $(G,M)$.
\end{Definition}

\begin{Theorem}[Determinant formula] \label{Determinant formula}
 Let $(G,M)$ be a measure graph over a groupoid. Then, for $|u| < R^{-1}$
the Zeta function satisfies the equation
$$Z_{(G,M)}(u) ^{-1} = (1-u^2)^{-\chi_{(G,M)}} {\rm det}_\tau (I - u A_G +
u^2 Q_G),$$
where  $R :=  \frac{D + \sqrt{D^2 + 4 D}}{2}$ and $D$ is the maximal vertex degree of $G$.
\end{Theorem}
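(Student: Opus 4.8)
The plan is to reduce the claimed identity to an identity of logarithmic derivatives and then to establish the latter through a generating-function analysis of non-backtracking path operators, following the strategy of \cite{StarkTerras-96,GIL09}. Both sides equal $1$ at $u=0$ and are holomorphic on the disc $|u| < R^{-1}$: for the right-hand side this is precisely the role of $R$, since $\|I - (I - uA_G + u^2 Q_G)\| \le D|u| + (D-1)|u|^2 = 1 - R^{-2} < 1$ for $|u| = R^{-1}$ (using $R^2 = DR + D$), so that $\log(I - uA_G + u^2 Q_G)$ is given by its convergent power series and ${\rm det}_\tau$ is well defined and holomorphic there. Hence it suffices to show that $u\frac{d}{du}\log$ of both sides agree. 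Writing $M(u) = I - uA_G + u^2 Q_G$ and recalling $\log Z = \sum_{j\ge1}\frac{\overline{N}_j}{j}u^j$, the target identity becomes
$$\sum_{j\ge 1}\overline{N}_j u^j = \frac{-2\chi_{(G,M)}\,u^2}{1-u^2} + \tau\!\left(M(u)^{-1}\bigl(uA_G - 2u^2 Q_G\bigr)\right),$$
where I use that $\tau$ is a finite, norm-continuous trace (Theorem \ref{snw}) to differentiate $\tau\log M(u)$ as $\tau(M(u)^{-1}M'(u))$ together with the cyclicity of $\tau$.

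The heart of the matter is the following operator generating function. Let $A_m$ denote the operator whose kernel $A_m(x,y)$ counts \emph{proper} (non-backtracking) paths of length $m$; each $A_m$ lies in $\mathcal{N}(\mathcal{G},G)$ since it is built from $A_G$ and ${\rm Deg}_G$ and is invariant under $\calG$. A direct combinatorial check gives $A_0 = I$, $A_1 = A_G$, $A_2 = A_G^2 - {\rm Deg}_G$ and the three-term recursion $A_m = A_G A_{m-1} - Q_G A_{m-2}$ for $m \ge 3$. The only anomaly is the extra $-I$ at $m=2$ (since $A_2 = A_G A_1 - (Q_G+I)A_0$), and it is exactly this anomaly that produces a clean telescoping: multiplying $\sum_{m\ge0}A_m u^m$ by $M(u)$ annihilates all coefficients except those of $u^0$ and $u^2$, yielding
$$M(u)\sum_{m\ge 0}A_m u^m = (1-u^2)\,I, \qquad\text{i.e.}\qquad \sum_{m\ge 0}A_m u^m = (1-u^2)\,M(u)^{-1}.$$
On $|u| < R^{-1}$ the series converges in operator norm, because $\|A_m\| \le D(D-1)^{m-1}$ and $R^{-1} < (D-1)^{-1}$, so the identity holds in $\mathcal{N}(\mathcal{G},G)$.

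It remains to pass from the proper-path operators $A_m$ to the reduced-path counts $\overline{N}_j$, and this tail bookkeeping is the main obstacle. Applying the finite trace $\tau$ termwise (justified by norm convergence and continuity of $\tau$) and using the trace formula of Proposition \ref{tausch} identifies $\tau(A_m)$ with the averaged number $\overline{T}_m = \int_V A_m(x,x)\,u(x)\,dM(x)$ of proper closed paths of length $m$. A proper closed path is either reduced or carries a maximal tail of some length $k\ge 1$; stripping the tail identifies it with a reduced closed path of length $m-2k$ together with compatibility data at the two junctions, and organizing this decomposition into a generating function expresses $\sum_m\overline{T}_m u^m$ in terms of $\sum_j\overline{N}_j u^j$. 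Feeding the generating-function identity $\sum_m A_m u^m = (1-u^2)M(u)^{-1}$ into the right-hand side of the target identity and using $\chi_{(G,M)} = \tfrac12\tau(I - Q_G)$ to recognize the tail contributions as the term $\frac{-2\chi_{(G,M)}u^2}{1-u^2}$ then yields exactly the desired equality. Throughout, the delicate points are specific to the infinite setting: replacing the ordinary trace by $\tau$, justifying the interchange of $\tau$ with the infinite sums via normality and finiteness of $\tau$ (Theorem \ref{snw}), and keeping all operators inside $\mathcal{N}(\mathcal{G},G)$ so that ${\rm det}_\tau$ and its logarithmic derivative make sense; these are handled by adapting the finite-graph computations of \cite{StarkTerras-96,GIL09} as indicated.
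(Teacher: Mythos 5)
Your route is the same one the paper takes (the Stark--Terras argument in the form of \cite{GIL09}), and the parts you carry out are correct: the reduction to logarithmic derivatives, the well-definedness of ${\rm det}_\tau$ on $|u|<R^{-1}$ via $\|uA_G-u^2Q_G\|\le D|u|+(D-1)|u|^2<1$, the recursion $A_0=I$, $A_1=A_G$, $A_2=A_G^2-Q_G-I$, $A_m=A_GA_{m-1}-Q_GA_{m-2}$, and the telescoping identity $M(u)\sum_{m\ge0}A_mu^m=(1-u^2)I$ are exactly the content of Lemma \ref{Determinant Lemma 1}, Lemma \ref{Determinant Lemma 3}(b) and the concluding computation (your norm bound $\|A_m\|\le D(D-1)^{m-1}$, if you back it up with a Schur-type row/column-sum estimate applied fiberwise, is even a bit sharper than the paper's $\|A_j\|\le R^j$). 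The gap is concentrated in the step you yourself flag as ``the main obstacle,'' the tail bookkeeping, and it is genuine in two respects. First, your final assembly cannot work as described for non-regular graphs: there is no scalar generating-function relation expressing $\sum_m\overline{T}_mu^m$ in terms of $\sum_j\overline{N}_ju^j$ alone, because tail counts are degree-weighted. The correct identities (Lemma \ref{Determinant Lemma 2}(b),(c)) are $t_j=t_{j-2}+\tau((Q_G-I)A_{j-2})$, hence $t_j=\tau\bigl((Q_G-I)\sum_{i\ge1}A_{j-2i}\bigr)$, and the quantities $\tau((Q_G-I)A_k)$, $k\ge1$, are not functions of the $\overline{N}_j$; they must be kept at the operator level and absorbed into $\tau\log M(u)$ through your generating identity (this is the role of $B_j=A_j-(Q_G-I)\sum_{i=1}^{[j/2]}A_{j-2i}$ in Lemma \ref{Determinant Lemma 3}). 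Only the residual $A_0=I$ terms for even $j$ produce a scalar contribution, namely $\tau(Q_G-I)\frac{u^2}{1-u^2}=-2\chi_{(G,M)}\frac{u^2}{1-u^2}$; identifying ``the tail contributions'' wholesale with the $\chi$-term conflates these two pieces.

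Second --- and this is the one step that cannot be dismissed as ``adapting the finite-graph computations'' --- the tail recursion in the measure setting requires a change of base point. Stripping the first and last edge of a tailed closed path based at $x$ yields a closed path based at a \emph{neighbor} $y$ of $x$, so the pointwise count one actually obtains lives at $y$, and to integrate it against $u\,dM$ one needs
$$\int_V u(x)\sum_{\{y\,:\,y\sim x\}}\ow{t}_j(x,y)\,dM(x)\;=\;\int_V u(y)\sum_{\{x\,:\,x\sim y\}}\ow{t}_j(x,y)\,dM(y),$$
where $\ow{t}_j(x,y)$ counts tailed reduced closed paths of length $j$ starting with the edge $(x,y)$. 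In the finite case this is a trivial re-summation; in the present setting it holds only because $\ow{t}_j$ is equivariant under the groupoid action and $m$ is invariant --- the paper establishes it via Proposition \ref{tausch} applied to the kernel $\ow{t}_j^{1/2}$, after checking measurability of $\ow{t}_j$ via Proposition \ref{Measurability-basic-quantities}(c). This invariance argument is precisely the new content of the theorem relative to \cite{StarkTerras-96,GIL09}, and your proposal does not identify it; your appeal to Proposition \ref{tausch} covers only the diagonal formula $\tau(A_m)=\int_V a_m(x,x)u(x)\,dM(x)$. Once these two points are supplied, your argument coincides with the paper's proof.
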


The rest of this section is devoted to proving the theorem. As
mentioned earlier, we closely follow \cite{GIL09} and omit some
calculations that only require algebraic manipulations of identities
of bounded operators.

We define a series of Carleman operators $A_j$ recursively via $A_0 := I$,
$A_1 := A_G$, $A_2 := A_G^2 - Q_G - I$ and $A_j := A_{j-1} A_G - A_{j-2} Q_G$, for
$j \geq 3$. Let $a_j$ denote the corresponding kernels. The following lemma
explains the importance of the $A_j$'s.
\begin{Lemma} \label{Determinant Lemma 1}
For each $p,q \in V$ the kernel entry $a_j(p,q)$ is equal to the number of
proper paths of length $j$ starting in $p$ and ending in $q$. Furthermore, for each $j$ the norms of the corresponding operators satisfy $\|A_j\| \leq R^j$, where $R :=  \frac{D + \sqrt{D^2 + 4 D}}{2}$ and $D$ is the maximal vertex degree of $G$.

\end{Lemma}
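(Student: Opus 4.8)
The plan is to prove both assertions at the level of the kernels, using that the product of two Carleman operators in $\calN(\calG,G)$ has kernel given by the matrix product $\ast$ introduced for measurable graphs. Thus the operator recursion translates into the kernel recursion $a_j = a_{j-1}\ast a_G - a_{j-2}\ast q_G$, where $q_G(x,y)=(\deg(x)-1)\,1_{\{(z,z):z\in V\}}(x,y)$ is the kernel of $Q_G={\rm Deg}_G-I$. Writing this out, for $p \approx q$ one obtains
\[
a_j(p,q) = \sum_{z\sim q} a_{j-1}(p,z) - (\deg(q)-1)\,a_{j-2}(p,q), \qquad j\geq 3,
\]
and since all sums are finite by the bounded-degree assumption, this is exactly the classical three-term recurrence for counting non-backtracking walks. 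Everything below is therefore a purely graph-theoretic (fiberwise) statement, and the measurable and von Neumann structures play no further role.

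For the combinatorial identity I would argue by induction on $j$, writing $N_j(p,q)$ for the number of proper paths of length $j$ from $p$ to $q$. The base cases $j=0,1$ are immediate, and $j=2$ reduces to checking that $a_2 = a_G^2 - {\rm Deg}_G$ subtracts precisely the $\deg(p)$ backtracking walks $p\to z\to p$; note $Q_G+I = {\rm Deg}_G$, which is why the correction in the definition of $A_2$ takes that exact form. For the inductive step one decomposes a proper path of length $j$ ending in $q$ into its initial segment of length $j-1$ (a proper path from $p$ to a neighbour $z$ of $q$) followed by the edge $z\to q$. The term $\sum_{z\sim q} N_{j-1}(p,z)$ counts all such extensions, but overcounts exactly those whose last edge backtracks the previous one, i.e.\ those initial segments that arrive at $z$ along the edge $(q,z)$. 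Stripping that last edge identifies these overcounted paths with proper paths of length $j-2$ from $p$ to $q$; each such length-$(j-2)$ path, arriving at $q$ along some edge $(w,q)$, is counted once for every neighbour $z\neq w$ of $q$, i.e.\ $\deg(q)-1$ times. This yields $N_j(p,q) = \sum_{z\sim q} N_{j-1}(p,z) - (\deg(q)-1)N_{j-2}(p,q)$, matching the kernel recurrence.

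The norm bound is then a short induction using the defining property of $R$. Since $A_G$ is the adjacency operator of a graph of maximal degree $D$ we have $\|A_G\|\leq D$, and since $Q_G$ is multiplication by $\deg(\cdot)-1$ with $0\leq\deg\leq D$ we have $\|Q_G\|\leq D-1$ (for $D \geq 2$; the remaining degenerate case is immediate). The number $R$ is the positive root of $x^2 - Dx - D = 0$, so $R^2 = DR + D$ and in particular $R>D$. The base cases give $\|A_0\|\le 1 = R^0$, $\|A_1\|\le D< R$, and $\|A_2\|\le D^2+D < DR + D = R^2$. For $j\geq 3$ the recursion and the inductive hypotheses yield
\[
\|A_j\| \leq \|A_G\|\,\|A_{j-1}\| + \|Q_G\|\,\|A_{j-2}\| \leq D R^{j-1} + (D-1)R^{j-2} = R^{j-2}\big(DR + D - 1\big) \leq R^{j-2}\,R^2 = R^j,
\]
where the last inequality uses $DR+D-1 < DR+D = R^2$. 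This closes the induction.

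The main obstacle is the careful bookkeeping in the combinatorial step, specifically justifying that the overcount equals exactly $(\deg(q)-1)N_{j-2}(p,q)$: one must verify that appending and then stripping the edge $z\to q$ sets up a genuine bijection between backtracking extensions and pairs consisting of a length-$(j-2)$ proper path together with an admissible neighbour $z$, and that the admissibility constraint $z\neq w$ is precisely what produces the factor $\deg(q)-1$ rather than $\deg(q)$. Once this is in place, the base-case check for $A_2$ and the norm estimate are routine.
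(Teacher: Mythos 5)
Your proposal is correct and takes essentially the same route as the paper: the reduction of the operator recursion to a fiberwise kernel recursion followed by the classical non-backtracking counting argument is precisely the content of Lemma 5.2 of \cite{GIL09}, which the paper cites rather than reproduces, and which you spell out in full (correctly handling the $\deg(q)-1$ overcount and the special role of $A_2 = A_G^2 - \mathrm{Deg}_G$). Your norm induction also matches the paper's, differing only in using $\|Q_G\| \leq D-1$ instead of the paper's marginally cruder $\|A_j\| \leq D\left(\|A_{j-1}\| + \|A_{j-2}\|\right)$, for which the identity $D(R+1) = R^2$ holds exactly.
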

\begin{proof}
Note that multiplying Carleman operators is essentially done via matrix multiplication with the corresponding kernels. Thus, the first statement can be deduced as in the proof of Lemma 5.2 in \cite{GIL09}.  For the statement on the norm we note that the operator $A_G$ is given by the family $(A_{G,\,\omega})$, where $A_{G,\,\omega}$ is the adjacency operator on the fiber $V^\omega$. Thus, for $f =(f_\omega) \in L^2(V,M)$ we obtain
$$\|A_Gf\|^2 = \int_{\Omega} \|A_{G,\omega} f_\omega\|^2 dm(\omega) \leq D^2 \int_{\Omega} \|f_\omega\|^2 dm(\omega) = D^2 \|f\|^2,$$
showing $\|A_1\| = \|A_G\| \leq D.$ As $Q_G+ I$ is multiplication by $\deg$ we obtain $\|A_2\| \leq D^2 + D$. The rest follows by induction on $j$, using the inequality $\|A_j\| \leq D(\|A_{j-1}\| + \|A_{j-2}\|).$
\end{proof}

The functions $a_j$ count the number of proper paths. As such paths may
still have tails we need to introduce some further quantity.
Following the notation in \cite{GIL09} we let $t_j(x)$ be the number
of proper closed paths of length $j$ starting in $x$ which have a
tail. By the definition of a measure graph the function $V \ni x
\mapsto t_j(x)$ is measurable and we set
$$t_j := \int_V t_j(x) \av(x)dM(x).$$
\begin{Lemma} \label{Determinant Lemma 2}
 \begin{itemize}
 \item[(a)] For each $j \geq 0$ we have $\overline{N}_j = \tau(A_j) - t_j$.
  \item[(b)] $t_1 = t_2 = 0$, and, for $j\geq 3$, $t_j = t_{j-2} +
\tau((Q_G -I)A_{j-2})$.
  \item[(c)] For each $j \geq 0$ the equality $t_j = \tau \left( (Q_G -I)
\sum_{i = 1}^{\left[ \frac{j-1}{2} \right] } A_{j-2i} \right)$ holds.
 \end{itemize}
\end{Lemma}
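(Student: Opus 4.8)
The plan is to mirror the argument of \cite{GIL09}, the one genuine change being that the ordinary trace there is replaced throughout by the finite trace $\tau$ of Theorem \ref{snw}. The single tool I would set up first is the kernel formula for $\tau$: for a Carleman operator $K \in \calN(\calG,G)$ with kernel $k$ one has $\tau(K) = \int_V u(x) k(x,x)\, dM(x)$. This I would read off the defining expression for $\tau$, using that $\eta^\omega$ is the counting measure on $V^\omega$ (so the $\delta_x$ form an orthonormal basis of $\ell^2(V^\omega,\eta^\omega)$ and $\tr(M_{u_\omega}^{1/2} K_\omega M_{u_\omega}^{1/2}) = \sum_{x \in V^\omega} u(x) k(x,x)$), then integrating against $m$ and invoking linearity of $\tau$ on $\calN(\calG,G)$. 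Applied to $A_j$ and to $(Q-I)A_{j-2}$ this gives $\tau(A_j) = \int_V u(x) a_j(x,x)\, dM$ and $\tau((Q-I)A_{j-2}) = \int_V u(x)(\deg(x)-2) a_{j-2}(x,x)\, dM$, since $Q-I$ is multiplication by $\deg - 2$.

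For (a) I would use Lemma \ref{Determinant Lemma 1}: $a_j(x,x)$ counts the proper closed paths of length $j$ at $x$, of which the reduced ones are exactly those without a tail, so $N_j(x) = a_j(x,x) - t_j(x)$ pointwise. Multiplying by $u$, integrating against $M$, and inserting the kernel formula turns this into $\overline{N}_j = \tau(A_j) - t_j$.

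Part (b) is the heart of the matter. The cases $j=1,2$ are immediate ($j=1$ forces a loop, $j=2$ forces the backtrack $(e,\bar e)$, so $t_1 = t_2 = 0$). For $j \ge 3$ I would peel off the outer tail edge: a tailed proper closed path of length $j$ is uniquely $e$ followed by a proper closed path $Q'$ of length $j-2$ at $x = t(e)$ followed by $\bar e$, subject to the two non-backtracking junction conditions at $x$. For fixed $Q'$ the admissible outer edges are all $\deg(x)$ of them except the two determined by the first and last edges of $Q'$, and these two exclusions collapse to one exactly when $Q'$ itself has a tail; so the count of tailed paths with \emph{inner} base $x$ is $(\deg(x)-2) a_{j-2}(x,x) + t_{j-2}(x)$, whose $u\,dM$-integral is $\tau((Q-I)A_{j-2}) + t_{j-2}$. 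The remaining, and decisive, step is to identify this with $t_j$, which is the same count organized by the \emph{outer} base: since each path is thereby charged to two adjacent vertices in a $\calG$-invariant way, the two integrals agree by the $\nu$-invariance of $m$, equivalently by the cyclicity of $\tau$ (the prototype being $\tau(A_G D A_G) = \tau(A_G^2 D)$ for diagonal $D$, which converts a neighbour-sum into a $\deg$-weight). This yields $t_j = t_{j-2} + \tau((Q-I)A_{j-2})$. I expect this base-point shift to be the main obstacle: it is automatic for the ordinary or $\Gamma$-trace in \cite{GIL09}, whereas here it must be routed through the trace property of $\tau$ established in Theorem \ref{snw}.

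Finally, (c) I would obtain from (b) by induction, unwinding the recursion down to the vanishing base values $t_1 = 0$ (odd $j$) or $t_2 = 0$ (even $j$); this is exactly why the upper summation limit is $\left[\frac{j-1}{2}\right]$ in both parities. Since $\tau$ is linear and norm-continuous (Theorem \ref{snw}), the resulting finite sum $\sum_{i=1}^{\left[\frac{j-1}{2}\right]} \tau((Q-I)A_{j-2i})$ may be pulled inside $\tau$ to give $t_j = \tau\big((Q-I)\sum_{i=1}^{\left[\frac{j-1}{2}\right]} A_{j-2i}\big)$, as asserted.
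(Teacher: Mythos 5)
Your proposal is correct and follows essentially the same route as the paper: the same pointwise identity $N_j(x) = a_j(x,x) - t_j(x)$ for (a), the same peeling-off of the outer tail edge with the count $(\deg(y)-1)\,t_{j-2}(y) + (\deg(y)-2)\bigl(a_{j-2}(y,y)-t_{j-2}(y)\bigr)$ collapsing to $t_{j-2}(y)+(\deg(y)-2)a_{j-2}(y,y)$ for (b), and the same unwinding of the recursion for (c). The base-point shift you rightly identify as the crux is handled in the paper exactly as you suggest, namely through the tracial identity of Proposition \ref{tausch} (i.e.\ $\tau(K^*K)=\tau(KK^*)$) applied to the Carleman operator with the $\calG$-invariant kernel $\ow{t}_j(x,y)^{1/2}$, where $\ow{t}_j(x,y)$ counts the tailed reduced closed paths of length $j$ starting with the edge $(x,y)$.
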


\begin{proof}
 By Lemma \ref{Determinant Lemma 1} we have $N_j(x) = a_j(x,x) - t_j(x)$.
Integrating with respect to the measure $\av dM$ yields statement (a). Next we turn to proving statement (b). Whenever $(x,y)$ is an edge we let $\ow{t}_j(x,y)$ be the number of reduced closed paths of length $j$ with tail starting with the edge $(x,y)$. If $(x,y)$ is not an edge we set $\ow{t}_j(x,y) = 0.$ The function $\ow{t}_j$ is the kernel of an operator belonging to $\mathcal{N}(\mathcal{G},G)$. Indeed, the required measurability of $\ow{t}_j$ follows from Proposition \ref{Measurability-basic-quantities} (c) and the invariance under the action of the groupoid is clear, as it acts by graph isomorphisms.  We obtain
\begin{align*}
  \int_V t_j(x) \av(x) dM(x) &= \int_V \av(x) \sum_{\{y\, : \, y \sim x\} } \ow{t}_j(x,y) dM (x)\\
  &=\int_{\Omega} \sum_{x \in V^\omega} \sum_{y\in V^\omega} \av(x) \ow{t}_j(x,y) dm(\omega)  \\
  &= \int_{\Omega} \sum_{y \in V^\omega} \sum_{x\in V^\omega} \av(y) \ow{t}_j(x,y) dm(\omega)\\
  &= \int_V \av(y) \sum_{\{x\, : \, x \sim y\} } \ow{t}_j(x,y) dM (y),
\end{align*}
where the third equality is due to the invariance of $\ow{t}_j(x,y)$ under the action of the groupoid and the invariance of $m$ (use Proposition \ref{tausch} with the kernel $k(x,y) = \ow{t}_j(x,y)^{1/2}$). Now (b) can be obtained from the equation
$$\sum_{\{x\, : \, x \sim y\}} \ow{t}_j(x,y) = t_{j-2}(y) + (\deg(y) - 2) a_{j-2}(y,y),$$
which is a consequence of the following observation: Each closed of
length $j$ with tail can be shortened to a closed path of length
$j-2$ by removing its first and last edge. Thus, we have to count
the number of ways in which one can extend reduced closed paths of
length ${j-2}$ starting in $y$ to reduced closed paths with tail
starting in neighbors of $y$. There are two possibilities: Either
such a path of length $j-2$ has a tail or not. There are exactly
$t_{j-2}(y)$ reduced closed paths with tail starting in $y$. If one
wants to avoid backtracking, each of these can be extended in
$\deg(y) -1$ ways to neighbors of $y$. There are $a_{j-2}(y,y) -
t_{j-2}(y)$ reduced closed paths without tails. To avoid
backtracking there are $\deg(y)-2$ possibilities to extend each of
them. Thus, we obtain
$$\sum_{\{x\, : \, x \sim y\}} \ow{t}_j(x,y) = (\deg(y) - 1)t_{j-2}(y) + (\deg(y) - 2) (a_{j-2}(y,y)-t_{j-2}(y))$$
and the claim follows after integrating these identities. Assertion (c) is an immediate consequence of (b).
\end{proof}

\begin{Lemma} \label{Determinant Lemma 3}
 For $j\geq 0$ let $B_j := A_j - (Q_G - I)\sum_{i =
1}^{\left[j/2\right]}A_{j-2i}$. Then
\begin{itemize}
\item[(a)] $$\tau(B_j) = \begin{cases}\overline{N}_j - \tau(Q_G -
I),\, &j \text{ even}\\ \overline{N}_j,\, &j \text{ odd.}\end{cases}$$
\item[(b)] $$\tau\left(\sum_{j\geq 1} B_j u^j\right) = \tau \left(-u
\frac{d}{du} \log (I - A_G u + Q_G u^2) \right),\, |u| < R^{-1}.$$
\end{itemize}
\end{Lemma}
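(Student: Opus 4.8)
For part (a), I would argue directly from Lemma~\ref{Determinant Lemma 2} and linearity of $\tau$. Since $B_j = A_j - (Q_G-I)\sum_{i=1}^{[j/2]}A_{j-2i}$, we have $\tau(B_j)=\tau(A_j)-\sum_{i=1}^{[j/2]}\tau\big((Q_G-I)A_{j-2i}\big)$, whereas Lemma~\ref{Determinant Lemma 2}(c) reads $t_j=\sum_{i=1}^{[(j-1)/2]}\tau\big((Q_G-I)A_{j-2i}\big)$. When $j$ is odd, $[j/2]=[(j-1)/2]$ and the two sums coincide, so $\tau(B_j)=\tau(A_j)-t_j=\overline{N}_j$ by Lemma~\ref{Determinant Lemma 2}(a). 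When $j$ is even the sum for $\tau(B_j)$ carries one extra index $i=j/2$, contributing $\tau\big((Q_G-I)A_0\big)=\tau(Q_G-I)$ because $A_0=I$; hence $\tau(B_j)=\tau(A_j)-t_j-\tau(Q_G-I)=\overline{N}_j-\tau(Q_G-I)$. This settles (a).

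For part (b) I would pass to operator-valued generating functions. Writing $L(u):=I-A_Gu+Q_Gu^2$, the recursion $A_j=A_{j-1}A_G-A_{j-2}Q_G$ together with the initial terms $A_0=I$, $A_1=A_G$, $A_2=A_G^2-Q_G-I$ yields, after collecting coefficients, the telescoping identity $\left(\sum_{j\geq0}A_ju^j\right)L(u)=(1-u^2)I$. The bound $\|A_j\|\le R^j$ from Lemma~\ref{Determinant Lemma 1} ensures norm convergence for $|u|<R^{-1}$, and $L(u)$ is invertible near $0$ since $L(0)=I$, so $\sum_{j\geq0}A_ju^j=(1-u^2)L(u)^{-1}$. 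Reindexing the double sum defining the $B_j$ by $m=j-2i$ factors out a scalar geometric series,
$$\sum_{j\ge 0}\Big(\sum_{i=1}^{[j/2]}A_{j-2i}\Big)u^j=\frac{u^2}{1-u^2}\sum_{m\ge 0}A_m u^m,$$
whence $\sum_{j\ge0}B_ju^j=\big(I-(Q_G-I)\tfrac{u^2}{1-u^2}\big)\sum_{m\ge0}A_mu^m=(I-Q_Gu^2)L(u)^{-1}$ after substituting the first identity and simplifying $(1-u^2)I-(Q_G-I)u^2=I-Q_Gu^2$. Subtracting $B_0=I$ gives $\sum_{j\ge1}B_ju^j=(I-Q_Gu^2)L(u)^{-1}-I$.

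It then remains to match traces. Since $\tau$ is a finite, norm-continuous trace (Theorem~\ref{snw}), it commutes with $\tfrac{d}{du}$ and with the norm-convergent series for $\log L(u)$, and I would establish the key identity $\tau\big(\tfrac{d}{du}\log L(u)\big)=\tau\big(L(u)^{-1}L'(u)\big)$ by differentiating $\log L=-\sum_k\frac1k(I-L)^k$ term by term and applying cyclicity of the trace to each summand $(I-L)^a(-L')(I-L)^b$; this recombines via $\sum_{k\ge0}(I-L)^k=L^{-1}$. With $L'(u)=-A_G+2Q_Gu$ and cyclicity this gives $\tau\big(-u\tfrac{d}{du}\log L\big)=u\,\tau(A_GL^{-1})-2u^2\tau(Q_GL^{-1})$. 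Comparing with $\tau\big((I-Q_Gu^2)L^{-1}-I\big)=\tau(L^{-1})-u^2\tau(Q_GL^{-1})-\tau(I)$, the difference collapses to $\tau\big((I-uA_G+u^2Q_G)L^{-1}\big)-\tau(I)=\tau(LL^{-1})-\tau(I)=0$, which is precisely the assertion of (b). The identity is first obtained for $|u|$ small enough that $\|I-L(u)\|<1$ and then extends to all of $|u|<R^{-1}$ since both sides are holomorphic there.

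The step I expect to be the main obstacle is the non-commutative differentiation of the logarithm: as operators one has $\tfrac{d}{du}\log L(u)\neq L^{-1}L'$ in general, and the entire computation hinges on the fact that this discrepancy is invisible to the trace. I would therefore treat the term-by-term differentiation and the cyclicity recombination with care, justifying every rearrangement by norm convergence of the relevant series and by the norm-continuity of $\tau$.
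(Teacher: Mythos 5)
Your proposal is correct, and it follows essentially the same route as the paper, which simply defers these computations to Lemma 7.2 and Corollary 7.4 of \cite{GIL09}: your part (a) is exactly the parity bookkeeping between $[j/2]$ and $[(j-1)/2]$ combined with Lemma \ref{Determinant Lemma 2}, and your part (b) carries out precisely the power-series manipulations (the identity $\bigl(\sum_{j\geq 0}A_ju^j\bigr)(I-A_Gu+Q_Gu^2)=(1-u^2)I$, justified by the norm bound $\|A_j\|\leq R^j$ of Lemma \ref{Determinant Lemma 1}) together with the trace-level identity $\tau\bigl(\tfrac{d}{du}\log L(u)\bigr)=\tau\bigl(L(u)^{-1}L'(u)\bigr)$ via cyclicity, which is the content of the cited arguments. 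You even correctly flag the one genuinely delicate point --- that $\tfrac{d}{du}\log L\neq L^{-1}L'$ as operators and only the traced version holds --- and handle it, as well as the extension from small $|u|$ to all of $|u|<R^{-1}$ by holomorphy, so nothing is missing.
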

\begin{proof}
The algebraic manipulations required to prove (a) are contained in the proof of Lemma 7.2
of \cite{GIL09}. To carry them out in our setting one only needs the identities of Lemma
\ref{Determinant Lemma 2}. Assertion (b) is given by Corollary 7.4 of
\cite{GIL09} whose proof involves algebraic manipulations of power series that require the estimate on the norm of the $A_j$ of Lemma \ref{Determinant Lemma 1}.
\end{proof}

\begin{proof}[Proof of Theorem \ref{Determinant formula}]
 Since $\tau$ is norm continuous we obtain  by Lemma \ref{Determinant Lemma 3}
(a) that the equality
 \begin{align*}
 \tau\left(\sum_{j\geq 1} B_j u^j\right) &= \sum_{j\geq 1}\tau(B_j )
u^j = \sum_{j\geq 1} \overline{N}_j u^j - \sum_{j\geq 1} \tau(Q_G - I) u^{2j} \\
&= \sum_{j \geq 1} \overline{N}_j u^j - \tau(Q_G - I)\frac{u^2}{1-u^2}
 \end{align*}
 holds. This computation together with \ref{Determinant Lemma 3} (b) yields
 \begin{align*}
  u \frac{d}{du} &\log Z_{(G,M)} = \sum_{j\geq 1} \overline{N}_j u^j\\ =&
\tau \left(-u
\frac{d}{du} \log (I - A_G u + Q_G u^2) \right) -
\frac{u}{2}\frac{d}{du}\log(1-u^2) \tau(Q_G-I).
 \end{align*}
 Dividing by $u$ for $u \neq 0$, integrating from $0$ to $u$ and taking
exponentials yield the claim.
\end{proof}

\section{Zeta function  and  integrated density of states on essentially regular graphs}\label{essRegular}
In this section we discuss properties of the Ihara Zeta function for
essentially regular graphs. As for regular graphs in the finite
case, their Zeta function is closely related to the spectrum of the
adjacency operator.

\begin{Definition}[Essentially regular graph]
Let $(G,M)$ be a  measure graph over a groupoid. Then, $G$ is called
\textit{essentially $(r+1)$-regular} if $\deg(x) = r+1$ for
$M$-almost all $x \in V$.
\end{Definition}

Let $(G,M)$ be a measure graph over a groupoid $\mathcal{G}$. Recall
that each self-adjoint operator $T \in \mathcal{N}(\mathcal{G},G)$
possesses a spectral measure $\mu_T$ on the spectrum $\sigma(T)$ of $T$ which is
uniquely determined by the identity
$$\tau(\varphi(T)) = \int_{\sigma(T)}  \varphi(\lambda) d\mu_T(\lambda), \text{ for each } \varphi \in C(\sigma(T)).$$

\begin{Remark}
The measure $\mu_T$ is sometimes called the abstract integrated
density of states of $T$, see Chapters 5 and 6 of \cite{LPV} for a
detailed discussion.
\end{Remark}

We let $\mu_G:= \mu_{A_G}$ be the spectral measure of the adjacency
operator of $(G,M)$. The following is the main observation of this
paragraph. It connects the spectral theory of $A_G$ and the Zeta
function $Z_{(G,M)}$.

\begin{Theorem}\label{determinant-formula-regular}
Let $(G,M)$ be a measure graph over a groupoid $\mathcal{G}$. Assume
that $(G,M)$ is essentially $(r+1)$-regular. Then, for $|u| < R^{-1}$ its Zeta function
satisfies
 $$Z_{(G,M)}(u)^{-1} = (1-u^2)^\frac{(r-1)\tau(I)}{2} \exp\left( \int_{\sigma({A_G})} \log(1- u \lambda + u^2 r) d\mu_G(\lambda)  \right),$$
 where $2R = (r+1) + \sqrt{(r+1)^2 + 4 (r+1)}$ and $\log$ is the main branch
of the complex logarithm on the sliced plane $\CC_- := \CC \setminus \{x \in \RR\, : \, x \leq 0\}$.
\end{Theorem}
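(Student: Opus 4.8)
The plan is to feed essential regularity into the general determinant formula of Theorem \ref{Determinant formula} and then to recognise the resulting determinant as a function of the self-adjoint adjacency operator, to which the spectral representation of $\tau$ applies. Since $\deg(x) = r+1$ for $M$-almost all $x$, the degree operator ${\rm Deg}_G$ equals $(r+1)I$ as an element of $\calN(\calG,G)$ (operators there being determined only up to $m$-almost everywhere equality along the fibres), and hence $Q_G = {\rm Deg}_G - I = rI$. Substituting this into Theorem \ref{Determinant formula}, the Euler characteristic exponent becomes $-\chi_{(G,M)} = -\tfrac12\tau(I - Q_G) = \tfrac{r-1}{2}\tau(I)$, which reproduces the prefactor $(1-u^2)^{(r-1)\tau(I)/2}$. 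It then remains to identify the second factor ${\rm det}_\tau(I - uA_G + u^2 Q_G) = {\rm det}_\tau(\phi(A_G))$, where $\phi(\lambda) := 1 - u\lambda + u^2 r$, with the exponential of the stated spectral integral.

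For this I would use that $\phi(A_G) = (1+u^2 r)I - uA_G$ is a polynomial in the self-adjoint operator $A_G$, hence normal with spectrum $\phi(\sigma(A_G))$. By definition ${\rm det}_\tau(\phi(A_G)) = \exp(\tau(\log\phi(A_G)))$, and by the composition rule of the functional calculus one has $\log\phi(A_G) = (\log\circ\,\phi)(A_G)$ as soon as $\log\circ\,\phi$ is continuous on $\sigma(A_G)$. Applying the spectral representation $\tau(\varphi(A_G)) = \int_{\sigma(A_G)} \varphi(\lambda)\,d\mu_G(\lambda)$ to the continuous function $\varphi = \log\circ\,\phi$ then gives
\[
\tau(\log\phi(A_G)) = \int_{\sigma(A_G)} \log(1 - u\lambda + u^2 r)\,d\mu_G(\lambda),
\]
and taking exponentials yields precisely the desired second factor. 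Reassembling the two factors then finishes the proof.

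The step requiring genuine care — and which I expect to be the main obstacle — is to justify that for $|u| < R^{-1}$ the scalar logarithm above may be taken to be the main branch on $\CC_-$ and that this is consistent with the power-series definition of ${\rm det}_\tau$. Here I would argue as in Lemma \ref{Determinant Lemma 1}, but using $\deg = r+1$ $M$-almost everywhere, to get $\|A_G\| \le r+1$, and then estimate
\[
\|I - \phi(A_G)\| = \|uA_G - u^2 r I\| \le (r+1)|u| + r|u|^2 .
\]
Since $2R = (r+1) + \sqrt{(r+1)^2 + 4(r+1)}$, a direct computation shows that $R^{-1}$ is exactly the positive root of $(r+1)x^2 + (r+1)x - 1 = 0$; consequently $(r+1)|u|^2 + (r+1)|u| < 1$, and a fortiori $(r+1)|u| + r|u|^2 < 1$, for $|u| < R^{-1}$. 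Thus $\|I - \phi(A_G)\| < 1$, so the power series defining $\log\phi(A_G)$ converges in norm and $\sigma(\phi(A_G))$ lies in the open disc $\{z : |1-z| < 1\} \subseteq \CC_-$. On this disc the series $-\sum_{k\ge 1}\tfrac1k(1-z)^k$ coincides with the main branch $\log z$, so the operator logarithm entering ${\rm det}_\tau$ agrees with $(\log\circ\,\phi)(A_G)$ for the main branch; in particular $\log\circ\,\phi$ is continuous on $\sigma(A_G)$, which is what licenses the spectral integral above.
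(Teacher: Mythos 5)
Your proposal is correct and takes essentially the same route as the paper's own proof: substitute $Q_G = rI$ (valid $m$-almost everywhere by essential regularity) into Theorem \ref{Determinant formula}, compute $\chi_{(G,M)} = \frac{(1-r)\tau(I)}{2}$, and convert $\tau\left(\log(I - uA_G + u^2 r I)\right)$ into the spectral integral via the defining property of $\mu_G = \mu_{A_G}$. Your closing paragraph simply makes explicit the ``elementary computation'' the paper invokes --- namely that for $|u| < R^{-1}$ one has $1 - u\lambda + u^2 r \in \CC_-$ on $\sigma(A_G) \subseteq [-(r+1),r+1]$ and that the power-series logarithm defining ${\rm det}_\tau$ agrees with the main branch there --- so it is a welcome filling-in of detail rather than a different argument.
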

\begin{proof}
To prove the theorem we will compute the quantities involved in the
determinant formula of Theorem \ref{Determinant formula}.  As $G$ is
essentially $(r+1)$-regular, the operator $Q_G = {\rm Deg}_G - I$ is
multiplication by the constant $r$. Thus, we obtain
  $$\chi_{(G,M)} =\frac{1}{2} \tau (I-Q_G) = \frac{(1-r) \tau(I)}{2}.$$
Using the inclusion  $\sigma(A_G) \subseteq [-(r+1),r+1]$ and $r< R$
an elementary computation shows $1-\lambda u+u^2r\in\CC_-$, whenever
$|u|<R^{-1}$ and $\lambda \in \sigma(A_G)$. Thus, for all $|u|<
R^{-1}$ the function $\lambda \mapsto \psi(\lambda) :=  \log(1 -
u\lambda +ru^2)$ is continuous on $\sigma(A_G)$. We obtain
  $$\log(1 - uA_G + u^2 Q_G) = \log(1 - uA_G + u^2 r I) =  \psi (A_G).$$
  Hence, the definition of $\mu_G$ yields
  $$\tau(\log(1 - uA_G + u^2 Q_G)) = \int_{\sigma(A_G)} \psi(\lambda) d\mu_G(\lambda).$$
  Now the claim follows from Theorem \ref{Determinant formula}.
\end{proof}

\begin{Remark}
 In \cite{GZ} the previous theorem serves as a definition for the
Ihara Zeta function of certain infinite regular graphs. There, the
measure $\mu_G$ is given by the integrated density of states (the
KNS-spectral measure)  of a weakly convergent graph sequence. Furthermore, for
vertex-transitive (regular) graphs the above formula recovers Theorem 1.3 of \cite{CJK13} which was shown there by different means via an analysis of Bessel functions and heat kernels.  In both cases the authors
consider spectral measures associated with the Laplacian $L = {\rm
Deg}_G - A_G$ instead of measures corresponding to $A_G$. However,
for essentially regular graphs the operator ${\rm Deg}_G$ is a
constant multiple of the identity and, thus, the Laplacian and the
adjacency matrix are essentially the same operators up to a shift by
a constant.
\end{Remark}

\begin{Corollary}\label{corollary:holomorphic extension}
Let $(G,M)$ be a measure graph over a groupoid $\mathcal{G}$. Assume
that $(G,M)$ is essentially $(r+1)$-regular. Then its Zeta function can be continued to a holomorphic function on the open set
$$\mathcal{O} := \{z \in \CC \,  : \, |z| < r^{-1/2}\} \setminus \{x \in \RR\,:\, r^{-1} \leq |x| \leq 1\} .$$
\end{Corollary}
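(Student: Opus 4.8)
The plan is to start from the closed expression furnished by Theorem~\ref{determinant-formula-regular}, namely
$$Z_{(G,M)}(u)^{-1} = (1-u^2)^{\frac{(r-1)\tau(I)}{2}} \exp\left(\int_{\sigma(A_G)} \log(1 - u\lambda + u^2 r)\, d\mu_G(\lambda)\right),$$
valid for $|u| < R^{-1}$, and to show that the right-hand side extends to a holomorphic, nowhere-vanishing function on all of $\mathcal{O}$. Once this is done, taking the reciprocal yields a holomorphic function on $\mathcal{O}$ which, by Theorem~\ref{determinant-formula-regular}, coincides with $Z_{(G,M)}$ on the disc $\{|u| < R^{-1}\} \subseteq \mathcal{O}$ (indeed $R^{-1} < (r+1)^{-1} < r^{-1}$, so this disc avoids both the circle $|u|=r^{-1/2}$ and the removed segments); this is the desired continuation. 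I would treat the two factors separately. The prefactor $(1-u^2)^{c}$ with $c = \frac{(r-1)\tau(I)}{2}$ causes no trouble on the whole disc $\{|u| < r^{-1/2}\}$: there $|u^2| < r^{-1} \le 1$, so $1 - u^2$ avoids $(-\infty,0]$, whence $\exp(c\log(1-u^2))$ with the principal logarithm is holomorphic and non-vanishing. The exponential of the integral is automatically non-vanishing, so the entire issue reduces to the holomorphy of the exponent on $\mathcal{O}$.

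The heart of the argument is a root-locus analysis of the quadratic $p_\lambda(u) := 1 - \lambda u + r u^2$ for $\lambda \in \sigma(A_G)$. Since $A_G$ is self-adjoint with $\|A_G\| \le r+1$ by Lemma~\ref{Determinant Lemma 1}, we have $\sigma(A_G) \subseteq [-(r+1), r+1]$, and I would use this bound crucially. The roots of $p_\lambda$ satisfy $u_+ u_- = r^{-1}$ and $u_+ + u_- = \lambda/r$. When $|\lambda| < 2\sqrt{r}$ the roots are complex conjugate with $|u_\pm|^2 = r^{-1}$, hence lie on the circle $|u| = r^{-1/2}$; when $2\sqrt r \le |\lambda| \le r+1$ the roots are real and, as $|\lambda|$ runs over this range, their moduli sweep out exactly $[r^{-1}, 1]$, the endpoints being attained at $|\lambda| = r+1$ and $|\lambda| = 2\sqrt r$. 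Thus, as $\lambda$ ranges over $[-(r+1), r+1]$, the zero set of $p_\lambda$ is precisely the circle $|u|=r^{-1/2}$ together with the real segments $\pm[r^{-1},1]$, which inside the open disc leaves exactly the excluded segments $\pm[r^{-1}, r^{-1/2})$; hence $p_\lambda$ is zero-free on $\mathcal{O}$. To get holomorphy of the principal logarithm I must also rule out negative real values: writing $u = x+iy$ gives $\mathrm{Im}\, p_\lambda(u) = y(2rx - \lambda)$, so $p_\lambda(u)$ can be real only if $y = 0$ or $x = \lambda/(2r)$. In the latter case $\mathrm{Re}\, p_\lambda(u) < 0$ forces $|u|^2 = x^2 + y^2 > r^{-1}$, placing $u$ outside the disc; and for real $u \in (-r^{-1}, r^{-1})$ one has $p_\lambda(u) > 0$, since $p_\lambda(0)=1$ and (by the spectral bound) $p_\lambda$ has no root of modulus below $r^{-1}$. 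Consequently $p_\lambda(u) \in \CC_-$ for every $u \in \mathcal{O}$ and every $\lambda \in \sigma(A_G)$, so $u \mapsto \log p_\lambda(u)$ is holomorphic on $\mathcal{O}$ for each fixed $\lambda$.

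It then remains to pass the holomorphy through the integral. Since $\mu_G$ is a finite measure of total mass $\tau(I) < \infty$ and the integrand $(\lambda, u) \mapsto \log p_\lambda(u)$ is jointly continuous with $\sup_{\lambda \in \sigma(A_G),\, u \in K} |\log p_\lambda(u)| < \infty$ on every compact $K \subset \mathcal{O}$ (as $p_\lambda$ stays bounded and bounded away from $(-\infty,0]$ there, uniformly in $\lambda$), the exponent $F(u) := \int_{\sigma(A_G)} \log p_\lambda(u)\, d\mu_G(\lambda)$ is holomorphic on $\mathcal{O}$ by Morera's theorem together with Fubini (equivalently, by differentiation under the integral sign). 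Hence $(1-u^2)^{c}\exp(F(u))$ is holomorphic and non-vanishing on $\mathcal{O}$, its reciprocal is holomorphic on $\mathcal{O}$, and it restricts to $Z_{(G,M)}$ on $\{|u|<R^{-1}\}$, completing the continuation. I expect the genuinely substantive step to be the branch-cut analysis of the middle paragraph: verifying that the obstruction set of the logarithm is \emph{exactly} the circle $|u|=r^{-1/2}$ and the two real segments $\pm[r^{-1},1]$, so that precisely the removal of $\pm[r^{-1},1]$ (no smaller, no larger set) is forced. Everything else is a routine consequence of the finiteness of $\mu_G$ and standard holomorphy-under-the-integral arguments.
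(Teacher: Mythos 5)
Your proof is correct and takes essentially the same route as the paper: the paper's proof is a one-line appeal to Theorem \ref{determinant-formula-regular}, the spectral inclusion $\sigma(A_G)\subseteq[-(r+1),r+1]$, and the ``elementary fact'' that $1-\lambda u+ru^2\in\CC_-$ for every $u\in\mathcal{O}$ and $\lambda\in[-(r+1),r+1]$. Your root-locus analysis and the computation $\mathrm{Im}\,p_\lambda(u)=y(2rx-\lambda)$ merely supply the details of that elementary fact, and the remaining steps (holomorphy of the prefactor and passage of holomorphy through the finite measure $\mu_G$) match what the paper leaves implicit.
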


\begin{proof}
 Theorem \ref{determinant-formula-regular}, the inclusion $\sigma(A_G) \subseteq [-(r+1),r+1]$ and the elementary fact that $1-\lambda u + u^2 r \in \CC_-$ for each pair $(u,\lambda) \in \mathcal{O}\times [-(r+1),r+1]$ show the claim.
\end{proof}

\section{Convergence of measure graphs and of Zeta functions}\label{Result}
In this section we provide a general result on convergence of Zeta
functions. More specifically, we introduce the notion of weak
convergence of measure graphs and show that it implies local compact convergence
of their Zeta functions. Our notion of weak
convergence of measure graphs generalizes the concept of weak
convergence of sequences of finite graphs which has been introduced
in \cite{BS}. While not noted explicitly, particular cases of weak
convergence of finite graphs are at the core of all the earlier
attempts to provide a Zeta function for infinite graphs via
approximation \cite{CMS,GZ,GIL08,GIL09}. Therefore, our convergence
theorem generalizes and unifies the results of the mentioned
literature.

Moreover, if the measure graphs converge to a limit we can interpret
the limit of the Zeta functions as the Zeta function of the limit
measure graph. In this way, we obtain a continuity statement for the
Zeta function.



\bigskip

We start with the following elementary but general observation on the convergence of Zeta functions.

\begin{Proposition} \label{proposition:convergence of Zeta functions}
Let $((G_n,M_n))$ be a sequence of measure graphs with averaging
functions $(\av_n)$ and vertex sets $(V_n)$ and uniform vertex
degree bound $D\in \NN$. Assume that the sequence $\int_{V_n} \av_n
(x) dM_n (x)$   is bounded and the limit
$$F_j:= \lim_{n\to \infty} \int_{V_n} N^{G_n}_j (x) \av_n (x) {\rm d} M_n (x)  $$
exists for any $j\in \NN$. Then $$Z_{(G_n,M_n)} \to
Z, $$
uniformly on compact subsets of $B_{(D-1)^{-1}}:= \{u\in \CC\, : \, |u| < (D-1)^{-1}\},$ where $Z$ is given by
$$Z(u) = \exp \left(\sum_{j\geq 1}\frac{F_j}{j}u^j\right).$$
If $(G,M)$ is a measure graph with vertex set $V$ and averaging function $\av$ which satisfies
$$F_j = \int_{V} N^{G}_j (x) \av (x) {\rm d} M (x)$$
for any $j\in \NN,$ then $Z = Z_{(G,M)}$.
\end{Proposition}
\begin{proof} By assumption we have convergence of the coefficients
$$\overline{N}_j^{(n)}:=\int_{V_n} N^{G_n}_j (x) \av_n(x) {\rm d} M_n (x)  \to  F_j.$$
Since $(M_n(V_n))$ is bounded by some constant $C \geq 0$ and $N_j$ satisfies $|N_j(x)| \leq D(D-1)^{j-1}$ (cf. proof of Proposition~\ref{Eulerproduct}) we obtain $\overline{N}_j^{(n)} \leq C D(D-1)^{j-1}$.  This  easily gives the desired
convergence.
\end{proof}

\begin{Remark} Note that boundedness of the sequence $\int_{V_n} \av_n
(x) dM_n (x)$ follows e.g. from boundedness of the sequence
$(M_n(V_n))$.
\end{Remark}


The previous proposition shows that in order to establish
convergence of Zeta functions it suffices to verify pointwise
convergence of their coefficients. The following concept of
statistical convergence of local patterns will ensure that.  Recall
that for $\alpha \in \mathcal{A}^D$ and $\ow{V}\subseteq V$ the set
$\ow{V}_\alpha$ denotes the collection of all vertices in $\ow{V}$
whose $\rho(\alpha)$-ball is isomorphic to $\alpha$.

\begin{Definition}[Weakly convergent measure graph sequences]
Let $((G_n,M_n))$ be a sequence of measure graphs
with uniform vertex degree bound $D \in \NN$. Let $(V_n)$ be the corresponding vertex sets and let $(\av_n)$ be the corresponding averaging functions. We call $((G_n,M_n))$ {\em weakly
convergent} if for each  $\alpha \in \mathcal{A}^D$, the  \textit{
frequency of $\alpha$}
\[
p(\alpha) := \lim_{n\to\infty} \int_{V_{n,\alpha} }\av_n(x) {\rm d} M_n(x),
\]
exists. In this case, a measure graph $(G,M)$ with vertex set $V$ and averaging function $\av$ is called a {\em  weak limit} of $((G_n,M_n))$ if for each $\alpha \in\mathcal{A}^D$ we have
$$p(\alpha) = \int_{V_\alpha} \av(x) {\rm d} M(x).$$
\end{Definition}

\begin{Remark}\label{remark:weakly convergent sequences}
Let $G$ be a finite graph equipped with the normalized counting measure on its vertex set and let $\av$ the function constantly equal to 1. Then, for $\alpha \in \mathcal{A}^D$ we have
$$\int_{V_\alpha}\av {\rm d} M = \frac{|\{x \in V\, : \, \pi_{\rho(\alpha)}(x) = \alpha\}|}{|V|} =: p(G,\alpha).$$
Thus, when interpreted as measure graphs with normalized measure, a sequence of finite graphs $(G_n)$ converges weakly if and only if for any $\alpha \in \mathcal{A}^D$ the sequence $(p(G_n,\alpha))$ converges. Sequences of connected graphs which satisfy the latter condition were introduced in \cite{BS} and called weakly convergent. Therefore, the notion of weak convergence for measure graphs is a generalization. With this observation it is not hard to come up with examples. For instance, considering regular graphs, one obtains such sequences from sofic approximations of groups. Further details including a precise definition for soficity of groups are given in Section~\ref{Actions}. Other examples are discussed at the end of this section.
\end{Remark}

The next lemma shows that the coefficients of the Zeta function only
depend on local patterns. This is the reason why it is compatible
with weak convergence.  Recall that for a fixed $r \geq 0$ the set
$\Vfr$ is the collection of all vertices whose rooted connected
component has radius $r$. We have seen in
Proposition~\ref{proposition:decomposition of V} that these sets are
measurable and naturally appear in a decomposition of $V$ into local
patterns. For $\alpha = [(G,x)] \in \mathcal{A}^D$ we define
 $$N_j(\alpha) := N_j^G(x). $$
 This is independent of the particular choice of the representative of $\alpha$.

 \begin{Lemma} \label{lemma:computation of Nj}
  Let $(G,M)$ be a measure graph an let $j\in \NN$. For each natural $n \geq j/2 +  1$ the identity
  $$\overline{N}_j := \sum_{\alpha \in \mathcal{A}_n^D} N_j(\alpha) M(\alpha)  + \sum_{r = 0}^{n-1} \sum_{\alpha \in \mathcal{A}_r^D} N_j(\alpha)  M^{\rm fin}(\alpha) $$
  holds, where
  $$ M(\alpha) = \int_{V_\alpha} \av {\rm d}M$$
  and
  $$M^{\rm fin}(\alpha) = \int_{V^{{\rm fin}, \rho(\alpha)}_\alpha} \av {\rm d}M.$$
 \end{Lemma}

 \begin{proof}
   Proposition~\ref{proposition:decomposition of V} shows that for each $n\in  \NN$ the set $V$ can be disjointly written as
  $$ V= \bigcup _{\alpha \in \mathcal{A}_n^D} V_\alpha \cup \bigcup_{r = 0}^{n-1} \bigcup_{\alpha \in \mathcal{A}_n^D} \Vfr_\alpha. $$
A closed path of length $j$ starting in $x$ can at most reach the
distance $j/2 +  1$ from $x$. Therefore, the value $N_j(x)$ only
depends on the isomorphism class of the ball of radius $j/2 + 1$
around $x$. This implies that for $n \geq j/2 + 1$ the function
$N_j$ is constant on each of the sets appearing in the above
decomposition. In particular, for $\alpha \in \mathcal{A}_n^D$ it
equals $N_j(\alpha)$ on $V_\alpha$ and for $\alpha \in
\mathcal{A}_r^D$ with $0\leq r \leq n-1$ it equals $N_j(\alpha)$ on
$\Vfr_\alpha$. Splitting the integral which appears in the
definition of $\overline{N}_j$ according to this decomposition
yields the statement.
 \end{proof}

In the previous lemma there appeared a term depending on the measure
of local patterns in finite connected components. The following
lemma shows that these are preserved under weak convergence.

\begin{Lemma} \label{lemma:convergence of finite patterns}
 Let $((G_n,M_n))$ be a weakly convergent sequence of measure graphs with vertex sets $(V_n)$ and averaging functions $(\av_n)$.  For each $\alpha \in \mathcal{A}^D$ the limit
 $$p^{\rm fin}(\alpha):= \lim_{n\to\infty} \int_{ V_{n,\alpha}^{{\rm fin},\rho(\alpha)}} \av_n(x) {\rm d} M_n(x) $$
 exists. If $(G,M)$ with vertex set $V$ and averaging function $\av$ is a weak limit of $((G_n,M_n))$ then

$$ p^{\rm fin}(\alpha) = \int_{ V_\alpha^{{\rm fin},\rho(\alpha)} } \av(x) {\rm d} M(x).$$
\end{Lemma}
\begin{proof}
 As seen in the proof of Proposition~\ref{proposition:decomposition of V} for each $r\geq 0$ we have
$$ \Vfr = \bigcup_{\alpha \in \mathcal{A}^D_r} V_\alpha \setminus \left(\bigcup_{\beta \in \mathcal{A}^D_{r+1}} V_\beta  \right).$$
 For $\alpha \in \mathcal{A}^D$ with $\rho(\alpha) =  r$ this implies
 $$\Vfr_\alpha = V_\alpha \cap \Vfr = V_\alpha  \setminus \left(\bigcup_{\beta\in \mathcal{A}^D_{r+1}} V_\beta  \right).$$
 Since the sets $V_\beta$ are pairwise disjoint for different $\beta\in \mathcal{A}^D_{r+1}$ the statement follows from the weak convergence.
\end{proof}

\begin{Theorem}[Continuity of the Zeta function] \label{theorem:convergence of zeta functions}
 Let $((G_n,M_n))$ be a weakly convergent sequence of measure graphs with uniform vertex degree bound $D\in \NN$. Then there exists a holomorphic function $Z:B_{(D-1)^{-1}} \to \CC$ such that
$$Z_{(G_n,M_n)} \to Z,  $$
uniformly on compact subsets of $B_{(D-1)^{-1}}.$ The function $Z$ is given by
$$Z(u) = \exp \left(\sum_{j\geq 1}\frac{F_j}{j}u^j\right),$$
with
$$F_j =  \sum_{\alpha \in \mathcal{A}_j^D} N_j(\alpha) p(\alpha)  + \sum_{r = 0}^{j-1} \sum_{\alpha \in \mathcal{A}_r^D} N_j(\alpha)  p^{\rm fin}(\alpha).$$
In particular, if $(G,M)$ is a weak limit of $((G_n,M_n))$, then $Z = Z_{(G,M)}$.
\end{Theorem}
\begin{proof}
 Together with the definition of weak convergence and Lemma~\ref{lemma:convergence of finite patterns}, Lemma~\ref{lemma:computation of Nj} shows that the coefficients of $Z_{(G_n,M_n)}$ converge and that their limits are given by the $F_j$. If $\alpha$ is the class of a  graph with one vertex and no edges for any $n$ we have $V_{n,\alpha} = V_n$. Therefore,  weak convergence implies the boundedness of the sequence $(M_n(V_n))$. With these observations the statement follows from Proposition~\ref{proposition:convergence of Zeta functions}.
\end{proof}

\begin{Remark}
 The proof of the previous theorem uses convergence of the statistics of local patterns and that the Zeta functions only depend on these statistics. Therefore, similar statements hold true for objects associated with the graph which only depend on these quantities as well. One example is the  integrated density of states of the adjacency operator or other finite range operators in  $\mathcal{N}(\mathcal{G},G)$  (see Section~\ref{essRegular} for a definition of the integrated density of states). More precisely, weak convergence of measure graphs implies weak convergence of the integrated density of states. We refrain from giving details.
\end{Remark}

We finish this section with two applications of our result on continuity of the Zeta function.

\subsection{Weakly convergent graph sequences} \label{subsection:weakly convergent graphs sequences}

As already discussed in Remark~\ref{remark:weakly convergent
sequences} weakly convergent graph sequences yield weakly convergent
measure graphs. It was shown in \cite{Ele} that for each weakly
convergent sequence of connected finite graphs $(G_n)$ with $|V_n|
\to \infty$ there exists a {\em limit graphing} $\mathcal{X}$ such
that for any $\alpha \in \mathcal{A}^D$ we have
$$\lim_{n\to \infty} p(G_n,\alpha) =  \mu(X_\alpha) = \int_{V_{\mathcal{X},\alpha}} \av \, {\rm d}M_\mathcal{X}, $$
where $X_\alpha = \{x \in X\, : \, B_{\rho(\alpha)}^{G_\mathcal{X}}((x,x)) \in \alpha\}.$  This implies that the sequence of finite measure graphs $((G_n,M_n))$ with normalized counting measure converge weakly  to the measure graph $(G_{\mathcal{X}},M_\mathcal{X})$. Recall that the Zeta function of a finite graph $G$ with normalized counting measure is denoted by $Z_{G,{\rm norm}}$ and that the Zeta function of a graphing $\mathcal{X}$ is denoted by $Z_\mathcal{X}$. From Theorem~\ref{theorem:convergence of zeta functions} and the previous discussion we immediately obtain the following.

\begin{Theorem}\label{theorem:weakly convergent graphs}
Let $(G_n)$ be a  weakly convergent sequence of finite connected
graphs with uniform vertex degree bound $D \in \NN$. There exists a
holomorphic function $Z:B_{(D-1)^{-1}} \to \CC$ such that
 $$Z_{G_n, {\rm norm}} \to  Z $$
 uniformly on compact subsets of $B_{(D-1)^{-1}}$ and the equality
$$Z = Z_{\mathcal{X}}$$
 holds for  every  limit graphing $\mathcal{X}$
of $(G_n)$.
\end{Theorem}

\begin{Remark} \label{Remark:Approximation_via_trace}
The above theorem extends the approximation results in the
literature:   For amenable, periodic graphs (with a discrete,
amenable group acting freely and co-finitely as automorphisms), an
approximation theorem for the Ihara Zeta function has been shown in
\cite{GIL08}. In \cite{CMS, GZ}, the authors show the existence of
the Ihara Zeta function for certain infinite regular graphs via a
convergence statement along covering sequences $(G_n)$ of finite,
regular graphs.
One way to see the existence of the limit is to use a theorem
of Serre \cite{Serre} on equidistribution of eigenvalues of Markov operators.
The convergence along F{\o}lner type subgraphs of amenable,
self-similar graphs has been proven in \cite{GIL09}. In all the
mentioned papers, the graph sequences under consideration can easily
be seen to be  weakly convergent.
Thus, for  every approximating sequence given or constructed in the
works \cite{CMS, GZ, GIL08, GIL09}, Theorem~\ref{theorem:convergence of zeta functions} shows
the convergence towards the Zeta function associated to the limit
graphing of the sequence. By comparing coefficients, it is not hard
to see that in all those cases, this function coincides with the
Ihara Zeta function of the original (infinite) graph. (For the
periodic case, we verify this explicitly in the proof of
Corollary~\ref{cor:soficapprox}.)
Thus, the above theorem generalizes and unifies the
approximation results in the literature.
\end{Remark}

\subsection{Percolation graphs}

Let $G_{\Gamma, {\rm perc}}$ be the percolation graph of the group $\Gamma$ with respect to the set of generators $I = \{g_1,\ldots,g_l,g_1^{-1},\ldots, g_l^{-1}\}$. The set of realizations of the percolation $\Theta = \{0,1\}^\Gamma$ is a compact space in the product topology and the corresponding Borel $\sigma$-algebra coincides with the product $\sigma$-algebra. Therefore, we have a notion of weak convergence of measures at hand. In the next theorem we prove that it is compatible with weak convergence of the associated measure graphs.
We point out here that this also shows that the notion of convergence of measure graph sequences goes beyond weak convergence of finite graphs towards
graphings.

\begin{Theorem} \label{theorem:percolationconv}
 Let $\mathbb{P}$ and $\mathbb{P}_n, n \in \NN,$ be $\Gamma$-invariant probability measures on $\Theta$. If the $(\mathbb{P}_n)$ converge  weakly to $\mathbb{P}$ the sequence  $((G_{\Gamma, {\rm perc}},M_{\mathbb{P}_n}))$ converges weakly to  $(G_{\Gamma, {\rm perc}},M_{\mathbb{P}})$. In particular, $\mathbb{P}_n \to \mathbb{P}$ weakly implies
 $$Z_{(G_{\Gamma, {\rm perc}},M_{\mathbb{P}_n})} \to Z_{(G_{\Gamma, {\rm perc}},M_{\mathbb{P}})}$$
 uniformly on compact subsets of $B_{(2l-1)^{-1}}$.
\end{Theorem}
\begin{proof}
 The vertex degree of the percolation graph is at most $2l$. We have seen that for all $\Gamma$-invariant measures on $\Theta$ the function $\av = 1_{\{e\} \times \Theta}$ is an averaging function. For $\alpha \in \mathcal{A}^{2l}$  and $n \geq 1$ this implies
 $$\int_{V_\alpha} \av\, {\rm d}M_{\mathbb{P}_n} = \int_\Theta  1_{\{\theta\in \Theta\,:\, B_r^{G_\theta} (e) \in \alpha\}}\, {\rm d} \mathbb{P}_n.$$
The set ${\{\theta\in \Theta\,:\, B_r^{G_\theta} (e) \in \alpha\}}$
is a finite union of  cylinder sets in $\Theta$. Therefore, the
indicator function in the above integral is continuous with respect
to the product topology. From the weak convergence of the measures
we infer
 $$\int_\Theta  1_{\{\theta\in \Theta\,:\, B_r^{G_\theta} (e) \in \alpha\}}\, {\rm d} \mathbb{P}_n \to \int_\Theta  1_{\{\theta\in \Theta\,:\, B_r^{G_\theta} (e) \in \alpha\}}\, {\rm d} \mathbb{P} = \int_{V_\alpha} \av\, {\rm d}M_{\mathbb{P}}.
 $$
 This shows weak convergence of the measure graphs. The 'in particular' statement follows from Theorem~\ref{theorem:convergence of zeta functions}.
\end{proof}
A particular example for the previous theorem is  Bernoulli percolation. For $0 \leq p \leq 1$ we let the measure $\mathbb{P}_p$ on $\Theta$ be given by
$$\mathbb{P}_p = \bigotimes_{\gamma \in \Gamma} \left[p \delta_{\{0\}} + (1-p) \delta_{\{1\}}\right]. $$
At each vertex independently it deletes all the emerging edges with probability $p$ and keeps them with probability $1-p$. Clearly, the $\mathbb{P}_p$ are $\Gamma$-invariant and weakly continuous in the parameter $p$. We obtain %
$$Z_{(G_{\Gamma, {\rm perc}},M_{\mathbb{P}_p})} \to 1, \text{ as } p \to 1,$$
and
$$Z_{(G_{\Gamma, {\rm perc}},M_{\mathbb{P}_p})} \to Z_{(G_0,\Gamma)}, \text{ as } p \to 0,$$
uniformly on compact subsets of $B_{(2l-1)^{-1}}$. Here $Z_{(G_0,\Gamma)}$ is the Zeta function of the Cayley graph when viewed as a periodic graph with respect to $\Gamma$.

\section{Actions of sofic groups on graphs}\label{Actions}
In this section we will investigate approximation of Zeta functions
on graphs which are periodic with respect to  the action of a sofic
group. The key step of our investigation is the
construction of a weakly convergent sequence of finite graphs reflecting the local statistics of the (periodic) graph in  Theorem
\ref{thm:soficapprox}. This will then be combined with the result on
weak convergence from the previous section.
The assertion of Theorem~\ref{thm:soficapprox} generalizes the previous
results provided in~\cite{CMS,GIL08}
in two ways: it is valid for all sofic groups
(i.e.\@ not bound to amenable or residually finite groups), as well as for all proper actions on the graph with finite covolume (rather than only for free actions with finite fundamental domain).
 As the class of sofic groups is rather large we thus obtain a
quite general result.

\medskip

There are various equivalent definitions for soficity of a group.
For our purposes, it will be convenient to work with the concept
of {\em almost homomorphisms}. To give a precise definition, we need
some preparation. For $n \in \NN$, we denote by
$\operatorname{Sym}(n)$ the symmetric group over $\{1, \dots, n\}$
with unit element $\operatorname{Id}_n$. This group is naturally
endowed with the {\em normalized Hamming distance} $d_H$, defined as
\[
d_H(\sigma, \tau):= \frac{\#\Big\{ a \in \{1, \dots, n\} \,|\, \sigma(a) \neq
\tau(a) \Big\}}{n}
\]
for $\sigma, \tau \in \operatorname{Sym}(n)$. Note that $d_H$ is a metric on
$\operatorname{Sym}(n)$,
see e.g.\@ \cite{Pestov-08}.

\begin{Definition}[Sofic groups] \label{defi:soficgroups}
A group $\Gamma$ with unity $e$ is called {\em sofic} if for every finite set
$T \subseteq \Gamma$ and for each $\varepsilon > 0$, there exist $n \in \NN$,
as well as a mapping
\[
\sigma: T \to \operatorname{Sym}(n): s \mapsto \sigma_s
\]
such that
\begin{enumerate}[(i)]
\item if $s,t, st \in T$, then $d_H(\sigma_s \sigma_t, \sigma_{st}) <
\varepsilon$,
\item if $e \in T$, then $d_H(\sigma_e, \operatorname{Id}_n) < \varepsilon$,
\item if $s,t \in T$ with $s \neq t$, then $d_H(\sigma_s, \sigma_t) \geq
1-\varepsilon$.
\end{enumerate}
If for $T$ and $\varepsilon$, there is some map $\sigma$ satisfying (i), (ii) and~(iii), 
then we say that $\sigma$ is an {\em almost homomorphism} for $(T, \varepsilon)$.
\end{Definition}

\begin{Remark} Sofic groups
have been invented by Gromov, cf.\@ \cite{Gromov-99}. The name was
given by Weiss in \cite{Weiss-00},  where the author defined the
notion for finitely generated groups. The class of all sofic groups
is large. In fact, it is not known whether all groups satisfy this
property. Sofic groups have become a flourishing subject in various
fields of mathematics, such as geometric group theory, ergodic
theory and symbolic dynamics. A survey  can  be found in
\cite{Pestov-08}.
\end{Remark}

We are now in position to prove the main theorem of this section.
Recall that periodic graphs have been introduced in Definition \ref{periodic graph}.

\begin{Theorem} \label{thm:soficapprox}
Let $(G,\Gamma)$ be a periodic graph with a sofic group $\Gamma$. Assume further
that the vertex degree of $G$ is bounded by $D\in \NN$ and that
$F$ is a fundamental domain of finite co-volume. Then, there exists a weakly convergent
sequence  of finite graphs $(G_n)$ such that for all $\alpha \in
\mathcal{A}^D,$
 $$\lim_{n\to \infty} p(G_n,\alpha) = \frac{\sum_{f \in F_{\alpha}}
 {1}/{|\Gamma_f|} }{\sum_{f \in F} 1/ {|\Gamma_f|}},$$
 where $F_\alpha= \{ f \in F\, :\, B^G_{\rho(\alpha)}(f) \in \alpha \}.$
\end{Theorem}

\begin{proof}
Fix an arbitrary $r \in \NN$, $\delta > 0$, and a fundamental domain
of vertices $F \subset V$ of finite co-volume.
The corresponding covering map is denoted by $\pi:V \to F$.  We construct a finite graph $G_r$ such that
 \begin{gather}\left| p(G_r,\alpha) - \big( \sum_{F_{\alpha}} 1/|\Gamma_f| \big) / \big( \sum_F 1/|\Gamma_f| \big) \right|
 < g(\delta) \tag{$\Diamond$} \label{ineq: approximation}
  \end{gather}
 holds for any $\alpha \in \mathcal{A}_r^D$, where $g: [0,1) \to \RR_{\geq 0}$ is a function solely dependent on $\delta$ with $\lim_{\delta \to 0} g(\delta) = 0$.  \\
 We first find some finite $\widetilde{F} \subseteq F$ such that
 \begin{eqnarray*}
 \sum_{f \in \widetilde{F}} \frac{1}{|\Gamma_f|} \leq \sum_{f \in F} \frac{1}{|\Gamma_f|}  \leq
 \sum_{f \in \widetilde{F}} \frac{1}{|\Gamma_f|} + \delta.
 \end{eqnarray*}
 With this notion at hand, we set
 \[
 \widetilde{B}^G_r(\widetilde{F}) := \big\{ x \in B^G_r(\widetilde{F})\,|\, \pi(x) \in \widetilde{F}  \big\} = B^G_r(\widetilde F) \cap \pi^{-1}(\widetilde{F}),
 \]
 where $B^G_r(\widetilde{F})$ denotes the union of $r$-balls in $G$ with centers in $\widetilde{F}$.
 Since the action of $\Gamma$ has finite stabilizers, for each $x \in V$ there exist finitely many elements $\gamma \in \Gamma$ such that $x = \gamma \pi(x)$ holds.
 We let
 $$T:= T_{\widetilde{F}} := \{\gamma \in \Gamma \mid     x = \gamma\, \pi(x) \mbox{ for some } x \in B^G_r(\widetilde{F}) \text{ with } \pi(x) \in \widetilde F\}$$
denote the collection of the $\gamma$ corresponding to elements in the $r$-Ball around $\widetilde{F}$. Note that due to the finiteness of
$\widetilde{F}$, as well as the finiteness of the stabilizers, the set $T$ is finite.
Note further that for all $f \in \ow{F}$, we have $\Gamma_f \subseteq T$. 
We set
\[
\ow{T} := \big( T \cup T^{-1} \big)^4 = \{fghl \mid f,g,h,l \in T \cup T^{-1}\}.
\]
Note that $T \cup T^{-1} \subseteq \ow{T}$ since $e \in T$.
We now use the fact that $\Gamma$ is sofic in order to find almost-homomorphisms with desirable properties. Precisely, we choose $N \in \NN$ and $\sigma: \ow{T} \to \operatorname{Sym}(N)$ such that there is some
set $S \subseteq \{1,\ldots,N\}$ with $|S| \geq (1-\delta / |\widetilde{F}|)N$ and for each $i \in S$, we get
\begin{enumerate}[(a)]
\item $\sigma_{\gamma\gamma^{\prime}}(i) = \sigma_{\gamma} \sigma_{\gamma^{\prime}}(i)$ for all $\gamma, \gamma^{\prime} \in (T \cup T^{-1})^2$,
\item $\sigma_{\gamma \gamma^{\prime} \gamma^{\prime\prime}}(i) = \sigma_{\gamma} \sigma_{\gamma^{\prime}} \sigma_{\gamma^{\prime\prime}}(i)$
for all $\gamma, \gamma^{\prime}, \gamma^{\prime\prime} \in T \cup T^{-1}$,
\item $\sigma_{\theta_1}(i) = \sigma_{\theta_2}(i)$ if and only if $\theta_1 = \theta_2$ for all $\theta_1, \theta_2 \in \ow{T}$,
\item $\sigma_{e}(i) = i$. 
\end{enumerate}
We remark that the existence of such an $N \in \NN$, and such a 
map $\sigma: \ow{T} \to \operatorname{Sym}(N)$ is a straight forward 
consequence from the definition of soficity by modifying suitable 
almost homomorphisms in the sense of Definition~\ref{defi:soficgroups}, for some $\varepsilon > 0$ chosen small enough in terms of $\delta$ and
in terms of the sizes of $\ow{T}$ and $\widetilde{F}$.  By adjusting parameters again if necessary, we can additionally find some set $\tilde{S} \subseteq S$
with
\begin{gather} \label{eqn:sizeS}
|\tilde{S}| \geq (1-\delta/|\widetilde{F}|)|S| \geq (1-2\delta / |\widetilde{F}|)N \tag{$\bigcirc$}
\end{gather}
such that
\begin{enumerate}[(e)]
\item $\sigma_{\gamma}(i) \in S$ for all $i \in \tilde{S}$ and 
each $\gamma \in T \cup T^{-1}$.
\end{enumerate}

We will now use this map $\sigma$ to construct the graph $G_r$. To this end, we start with an auxiliary construction. For $f\in \widetilde F$ and $i,j \in S$ we say that $i$ and $j$ are {\em $f$-pre-connected}, if there exists $\gamma \in \Gamma_f$ such that $\sigma_{\gamma}(i) = j$. In this case, we write $i \sim_f j$.  

\medskip

{\em Claim~1:} $\sim_f$ is an equivalence relation on $S$. Moreover, for any $i \in \widetilde S$ the corresponding equivalence class $[i]_f$ satisfies
$$|[i]_f|  = |\{j \in S \mid i \sim_f j\}| =  |\Gamma_f|.$$

\medskip

{\em Proof of the claim.} We have $e \in \Gamma_f$. Hence it follows from (d) that $i \sim_f i$ and so $\sim_f$ is reflexive. Next we show its symmetry. If $i \sim_f j$, then by definition there exists $\gamma \in \Gamma_f$ such that $\sigma_\gamma (i) = j$. Since $i,j \in S$, properties  (a) and (d) yield $\sigma_{\gamma^{-1}}(j) = i$ proving $j \sim_f i$. With the same arguments it follows that $i \sim_f j$ and $j \sim_f k$ implies $i \sim_f k$ and so $\sim_f$ is transitive.  By definition of $\widetilde{S}$ for every $\gamma \in \Gamma_f$ and $i \in \widetilde{S}$ we have $\sigma_\gamma(i) \in S$ and therefore $i \sim_f \sigma_\gamma(i)$. Moreover, if $\sigma_\gamma(i) = \sigma_{\gamma'}(i)$, then (c) implies $\gamma' = \gamma$. This proves the desired equality for $|[i]_f|$ and the claim follows.

\medskip

This puts us in the position to define the graph $G_r$. We let $S_f$ the set of equivalence classes of $\sim_f$ and $\widetilde S_f :=\{[i]_f \in S_f \mid i \in \widetilde{S}\}$ the set of equivalence classes which have one representative in $\widetilde S$. Since the equivalence relation induces a partition of $S$, the previous claim on the size of $[i]_f$ for $i \in \widetilde{S}$ yields
\begin{align*}
\frac{|\widetilde{S}|}{|\Gamma_f|} \leq |\widetilde S_f | \leq |S_f| \leq \frac{|S|}{|\Gamma_f|} + |S \setminus \widetilde{S}|. \tag{$\bigtriangleup$} \label{ineq: preconnectedness1}
\end{align*}
%
%
%
%
%
%
%
%
%
We define the vertex set of $G_r$ as  $V_r := \big\{ (f,[i]_f)\,|\, f \in \ow{F}, [i]_f \in {S}_f \big\}$
and we connect two vertices $(f,[i]_f), (g,[j]_g)$ by an edge if there exist $\gamma \in T \Gamma_f$ and $\gamma' \in T\Gamma_g$   such that    $\gamma f \sim \gamma' g$ in $G$ and  $\sigma_{\gamma}(i) = \sigma_{\gamma'}(j)$ holds. This definition is independent of the particular choice of $i,j$. Indeed, if $i' \in [i]_f$ and $j' \in [j]_g$, then there exist $\gamma_i \in \Gamma_f$ and $\gamma_j \in \Gamma_g$ with $\sigma_{\gamma_i}(i') = i$ and $\sigma_{\gamma_j} (j') = j$. Since stabilizers are subgroups we have $\gamma\gamma_i \in T \Gamma_f$ and $\gamma'\gamma_j \in T \Gamma_g$. From (a) we infer
$$\sigma_{\gamma\gamma_i}(i') = \sigma_\gamma(i) = \sigma_{\gamma'}(j) = \sigma_{\gamma^{\prime} \gamma_j}(j').$$
This implies independence of chosen representatives.

To show the desired statement, for each $i \in \ow{S}$, we introduce the maps
$$\varphi_i:\ow{B}_r^G(\ow{F}) \to V_r,\, x \mapsto \big(\pi(x), [\sigma_{\gamma^{-1}}(i)]_{\pi(x)}\big),$$
where $\gamma$ solves $x = \gamma \, \pi(x)$. Note that since $i \in \widetilde S$, we have $\sigma_{\gamma^{-1}}(i) \in S$ so that $[\sigma_{\gamma^{-1}}(i)]_{\pi(x)}$ exists. Naturally, we have to show that this definition does not depend on the choice of $\gamma$.  If $\gamma$ and $\gamma^{\prime}$ are two solutions, we necessarily
get $\gamma^{-1}\gamma^{\prime} \in \Gamma_{\pi(x)}$. Since $i \in \ow{S}$, property~(e) combined with the properties (a) and (d) yields
\[
\sigma_{\gamma^{-1}\gamma^{\prime}} \sigma_{\gamma^{\prime\, -1}} (i) 
\stackrel{\mathrm{(a)}}{=} 
\sigma_{\gamma^{-1}} \sigma_{\gamma^{\prime}} \sigma_{\gamma^{\prime\, -1}}(i)
\stackrel{\mathrm{(a)}}{=}
\sigma_{\gamma^{-1}}\sigma_{e}(i) 
\stackrel{\mathrm{(d)}}{=} \sigma_{\gamma^{-1}}(i).
\]
Hence, $\sigma_{\gamma'^{-1}}(i) \sim_{\pi(x)} \sigma_{\gamma^{-1}}(i)$.

\medskip

We need to prove that the mappings $\varphi_i$ preserve the local graph structures. 


\medskip

{\em Claim 2:} For all $i \in \ow{S}$  the map $\varphi_i$ is a graph isomorphism onto its image. 

\medskip

{\em Proof of the claim:} Let $i \in \ow{S}$. We show that $\varphi_i$ is injective. For $x,y$ we choose $\gamma_x,\gamma_y \in T $ with $x = \gamma_x \pi(x)$ and $y = \gamma_y \pi(y)$. If  $\varphi(x) = \varphi(y)$, then $\pi(x) = \pi(y)$ and
$$[\sigma_{\gamma_x^{-1}}(i)]_{\pi(x)} =  [\sigma_{\gamma_y^{-1}}(i)]_{\pi(y)}  = [\sigma_{\gamma_y^{-1}}(i)]_{\pi(x)}.$$
Hence, there exists $\gamma \in \Gamma_{\pi(x)}$ with $\sigma_\gamma \sigma_{\gamma_x^{-1}}(i) = \sigma_{\gamma_y^{-1}}(i)$. By (a) and (c) we obtain $\gamma \gamma_x^{-1} = \gamma_y^{-1}$. This implies 
$$y = \gamma_y \pi(y) = \gamma_y \pi(x) = \gamma_x \gamma^{-1} \pi(x) = x$$
and the injectivity is proven.


Next we prove that $\varphi_i$ and its inverse preserve edge relations. Let $x,y \in \ow{B}_r(\ow{F})$ with $(\pi(x),[\sigma_{\gamma_x^{-1}}(i)]_{\pi(x)}) \sim (\pi(y),[\sigma_{\gamma_y^{-1}}(i)]_{\pi(y)})$.  This is equivalent to the existence of $\gamma \in T \Gamma_{\pi(x)}$ and $\gamma' \in T \Gamma_{\pi(y)}$ with $\gamma \pi(x) \sim \gamma' \pi(y)$ in $G$ and $\sigma_\gamma \sigma_{\gamma_x^{-1}}(i) = \sigma_{\gamma^{\prime}} \sigma_{\gamma_y^{-1}}(i)$. According to (a) and (c) this in turn is equivalent to the existence of $\gamma \in T \Gamma_{\pi(x)}$ and $\gamma' \in T \Gamma_{\pi(y)}$ with $\gamma \pi(x) \sim \gamma' \pi(y)$ in $G$  and $\gamma \gamma_x^{-1} = \gamma' \gamma_y^{-1}$. Since $\gamma \gamma_x^{-1}  x = \gamma \pi(x)$, $\gamma' \gamma_y^{-1}  y = \gamma' \pi(y)$ and $\Gamma$ acts on $G$ by graph isomorphisms, the latter statement is equivalent to $x \sim y$ in $G$ and the isomorphism properties for $\varphi_i$ are proven.

For $f \in \widetilde{F}$ and $i \in \widetilde S$ we have $\varphi_i(f) = (f, [i]_f)$.

\medskip

To finish the proof of the theorem fix $\alpha \in \mathcal{A}_r^D$.
Since $\mathcal{A}_r^D$ is finite, by increasing the set $\ow{F}$ if necessary (at the beginning of the proof), we can further assume that for the set
\[
\ow{F}_{\alpha} := \big\{ f \in F_{\alpha} \cap \ow{F}\,|\, \pi(x) \in \ow{F} \mbox{ for all } x \in B_r(f)  \big\},
\]
we have
$\sum_{f \in F_{\alpha}} 1/|\Gamma_f| \leq \sum_{f \in \ow{F}_{\alpha}} 1/|\Gamma_f| + \delta$.
Combining the fact that the $\varphi_i$ are graph isomorphisms, which map $f$ to $(f,[i]_f)$, with the bounds  in~\eqref{ineq: preconnectedness1}, we have
\begin{eqnarray*}
\sum_{f \in \ow{F}_{\alpha}}  \frac{|\ow{S}|}{|\Gamma_f|} \leq \big| \big\{ (f,[i]_f) \in V_r\,|\, B_{\rho(\alpha)}^{G_r}\big( (f,[i]_f) \big) \in \alpha \big\} \big|
 \leq  \sum_{f \in \ow{F}_{\alpha}}\frac{|S|}{|\Gamma_f|} + |\widetilde{F}| \cdot |S \setminus \widetilde{S}|.
\end{eqnarray*}
Recall that $|S|\leq N$ and  by inequality~\eqref{eqn:sizeS}, we have
$$|\ow{S}| \geq (1 - \delta/|\widetilde{F}|) |S| \geq (1- 2\delta/|\widetilde{F}|) N.$$
Consequently,
\begin{eqnarray*}
\sum_{f \in \ow{F}_{\alpha}} \frac{(1-2\delta)N}{|\Gamma_f|} &\leq& p\big(G_r,\alpha \big)\,|V_r| \quad \leq \quad \sum_{f \in \ow{F}_{\alpha}} 
\frac{N}{|\Gamma_f|} + \delta \cdot N , \\
\sum_{f \in \ow{F}} \frac{(1-2\delta)N}{|\Gamma_f|} &\leq& |V_r| \quad \leq \quad \sum_{f \in \ow{F}} \frac{N}{|\Gamma_f|} + \delta \cdot N.
\end{eqnarray*}
These inequalities and the fact that the sums over $\ow{F}$ and over $\ow{F}_{\alpha}$ are close to the sums over $F$, respectively
over $F_{\alpha}$ imply
\begin{align*}
(1-2\delta) \frac{\sum_{f \in F_\alpha} 1/|\Gamma_f|\, -\, \delta}{\sum_{f \in F } 1/|\Gamma_f| \,+\, \delta} \leq p(G_r,\alpha) \leq \frac{1}{1-2\delta} \frac{\sum_{f \in F_\alpha} 1/|\Gamma_f| \, + \, \delta}{\sum_{f \in F } 1/|\Gamma_f| - \delta}.
\end{align*}
With this at hand, routine calculations show that for small $\delta$ the resulting graph $G_r$ satisfies inequality~\eqref{ineq: approximation} with
$g(\delta) =  C \delta$ for a large enough constant $C$ depending on $\sum_{f \in F } 1/|\Gamma_f|$ and $\sum_{f \in F_{\alpha}} 1/|\Gamma_f|$.  This finishes the proof.
\end{proof}

The previous theorem can be reformulated in terms of weak convergence of measure graphs. It states that for any periodic graph $(G,\Gamma)$ as above there exists a sequence of finite graphs $(G_n)$ such that the associated (normalized) measure graphs $((G_n,M_n))$ weakly converge to the measure graph $\Big(G, \big(\sum_{f} 1/|\Gamma_f|\Big)^{-1} M \big)$. With this observation we immediately obtain the following from Theorem~\ref{theorem:convergence of zeta functions}.

\begin{Corollary} \label{cor:soficapprox}
Let $(G,\Gamma)$ be a periodic graph with vertex degree bounded by $D$.
Assume further that the group $\Gamma$ is sofic and that $F$ is a fundamental domain of finite co-volume $\mathfrak{v} := \sum_{f \in F} 1/|\Gamma_f|$. Then, there is a weakly convergent sequence of finite graphs $(G_n)$ such that
\[
\lim_{n \to \infty} Z_{G_n, {\rm norm}}^{\mathfrak{v}} = Z_{(G,\Gamma)},
\]
uniformly on compact subsets of $B_{(D-1)^{-1}}:= \{u \in \CC\, : \, |u| < (D-1)^{-1}\}$. 
\end{Corollary}

\begin{Remark}
(a) The above corollary is an extension of the approximating
theorems of~\cite{CMS} dealing with residually finite groups acting
freely on a regular graph  and~\cite{GZ} dealing with limits of
covering sequences of finite, regular graphs.  Further, it is the
natural extension of the approximation result in \cite{GIL08} which
is concerned with amenable graphs.
Additional generality is provided by fact that in the framework of a finite measure, we allow for infinite fundamental domains, as well as for proper (but not necessarily free) actions of $\Gamma$ on $V$. The crucial idea in the proof for overcoming the lack of freeness  is to identify vertices which are linked by almost homomorphisms $\sigma_{\gamma}$ arising from stabilizer elements $\gamma$ (``pre-connectedness'').



(b) For graphs with positive Cheeger constant, finite exhaustions
might converge, but in general, the geometry of the resulting limit
will be completely different from the one of the original objects.
One example, as e.g.\@ considered in the introduction of the
paper~\cite{BLS}, is a sequence of finite, regular trees which converge
towards the graphing consisting almost surely of
infinite Canopy trees. Those are by no means infinite regular
trees.
Thus, the approximation {\em through
induced subgraphs} fails in general in non-amenable situations.
However, sofic approximations can still be found in many situations.


(c) The  weakly converging sequence of finite graphs constructed in
Theorem \ref{thm:soficapprox} for periodic graphs with a sofic group
action  can also be used for spectral approximation of suitable
self-adjoint operators. In fact, whenever a sequence of finite
graphs converges weakly then weak convergence of the normalized
empirical spectral distributions of corresponding operators to a
limit follows and this limit can be expressed through a trace on the
von Neumann algebra associated with the limit graphing (see
\cite{Ele08, Ele08a, Pog} for detailed explanations). In this way,
Theorem \ref{thm:soficapprox} could be used to recover parts of the
results of \cite{Sch2}. (The results of \cite{Sch2} are more general
in that they  allow for unbounded operators and include some
randomness.) For hyperfinite graphs even uniform convergence of the
normalized empirical spectral distributions can be shown, cf.\@
\cite{Ele08, Pog, Pog14}.

(d) Note that the previous corollary is not a special case of
Theorem~\ref{theorem:weakly convergent graphs}. Indeed, we identify
the limit Zeta function as the Zeta function of a periodic graph
instead of that of a graphing. 
This  is possible due to our concept of convergence 
of measure graphs. This allows us to directly describe the periodic
graph as the limit object instead of giving rise to a graphing with
the same local statistics as the original graph.
\end{Remark}

\end{document}